\newtheorem{theorem}{Theorem}[section]
\newtheorem{lemma}[theorem]{Lemma}
\newcommand{\mn}{\bm{n}}
\newcommand{\mt}{\mathcal{T}_h}
\newcommand{\ms}{\mathbb{S}}
\newcommand{\mb}{\mathbb{R}}
\newcommand{\be}{\begin{eqnarray}}
	\newcommand{\ee}{\end{eqnarray}}
\newcommand{\ea}{\end{array}}
\newcommand{\ba}{\begin{array}}
\newcommand{\pb}{\begin{frame}}
	\newcommand{\pe}{\end{frame}}
\newcommand{\rk}{{\rm{ker}}} 
\newcommand{\bsi}{\bm{\sigma}}
\newcommand{\bta}{\bm{\tau}}
\newcommand{\ben}{\begin{equation*}}
	\newcommand{\een}{\end{equation*}}
\newtheorem{The}{Theorem}[section]
\newtheorem{lm}{Lemma}[section]
\newtheorem{re}{Remark}[section]
\numberwithin{equation}{section}
\numberwithin{table}{section}
\numberwithin{figure}{section}
\theoremstyle{remark}
\newtheorem{remark}[theorem]{Remark}
\DeclareMathOperator{\ggrad}{grad}
\DeclareMathOperator{\ccurl}{curl}
\DeclareMathOperator{\ddiv}{div}
\DeclareMathOperator{\ssym}{sym}
\DeclareMathOperator{\ddev}{dev}  
\DeclareMathOperator{\rrot}{rot} 
\DeclareMathOperator{\mmod}{mod} 
\newcommand{\rd}{\ddiv}
\newcommand{\bd}{\ddiv}
\newcommand{\bc}{\ccurl}%\usepackage[backend=bibtex]{biblatex}
\title[New Conforming Finite element divdiv Complexes in Three Dimensions]{New Conforming Finite element  divdiv Complexes in Three Dimensions}
\author {Jun Hu}
\address{LMAM and School of Mathematical Sciences, Peking University,
  Beijing 100871, P. R. China. }
\email{hujun@math.pku.edu.cn}
\author {Yizhou Liang}
\address{LMAM and School of Mathematical Sciences, Peking University,
  Beijing 100871, P. R. China. }
\email{lyz2015@pku.edu.cn}
\author {Rui Ma}
\address{School of Mathematics and Statistics, Beijing Institute of Technology, 
Beijing 100081, P.R. China. }
\email{rui.ma@bit.edu.cn }
 \author {Min Zhang}
 \address{Computational Science Research Center, Beijing {\rm{100193}}, P. R. China.}
\address{School of Mathematical Sciences, Peking University,
	Beijing {\rm100871}, P. R. China.}
\email{zhangminyy@csrc.ac.cn }
\begin{document}
\maketitle
\begin{abstract}
In this paper, the first family of conforming finite element divdiv complexes on cuboid grids in three dimensions is constructed. Besides, a new family of conforming finite element divdiv complexes with enhanced smoothness on tetrahedral grids is presented. These complexes are exact in the sense that the range of each discrete map is the kernel space of the succeeding one.
\end{abstract}
%-------------------
%\subjclass[2010]{65N30}

%--------------------
%============
\section{Introduction}
This paper considers the construction of conforming finite element sub-complexes of the following divdiv complex on $\Omega\subseteq \mathbb{R}^{3}$, see \cite{MR4113080, 2020Complexes}
\begin{equation}\label{divdivComp}
\begin{aligned}
RT\stackrel{\subseteq}{\rightarrow}H^1(\Omega;\mathbb{R}^3) \stackrel{\operatorname{dev}\ggrad}{\longrightarrow}H(\operatorname{sym}\ccurl,\Omega;&\mathbb{T}) \stackrel{\operatorname{sym}\ccurl}{\longrightarrow}H(\operatorname{div}\ddiv,\Omega;\mathbb{S})\\ &\stackrel{\ddiv\ddiv}{\longrightarrow}L^2(\Omega;\mathbb{R})\stackrel{}{\rightarrow}0,
\end{aligned}
\end{equation}
where $RT:=\{a\mathbf{x}+\bm{b}:\, a\in\mb, \bm{b}\in \mb^{3}\}$, the spaces $H^1(\Omega;\mathbb{R}^3)$ and $L^2(\Omega;\mathbb{R})$ are the vector-valued and scalar-valued Sobolev spaces, respectively, and 
\begin{equation*}
	\begin{aligned}
		H(\ssym\ccurl, \Omega ; \mathbb{T}):=&\left\{\bsi \in L^{2}(\Omega ; \mathbb{T}): \ssym\ccurl \bsi\in L^{2}(\Omega ; \mathbb{S})\right\},\\
		H(\ddiv\ddiv, \Omega ; \mathbb{S}):=&\left\{\bsi \in L^{2}(\Omega ; \mathbb{S}): \ddiv\ddiv \bsi \in L^{2}(\Omega ; \mathbb{R})\right\}.
	\end{aligned}
\end{equation*} 
Here $\mathbb{T}$ and $\ms$ denote the spaces of traceless and symmetric matrices in three dimensions, respectively, the operator $\ssym\ccurl$ is the symmetric part of the row-wise $\ccurl$ operator, and the inner divergence operator of $\ddiv\ddiv$ is applied by row resulting a column vector for which the outer divergence operator is applied.
%with $\ssym\ccurl$ the symmetric part of the row-wise $\ccurl$ operator, taking value in the space $\mathbb{T}$ of traceless matrices in three dimensions, and $H(\ddiv\ddiv, \Omega ; \mathbb{S}):=\left\{\bsi \in L^{2}(\Omega ; \mathbb{S}): \ddiv\ddiv \bsi \in L^{2}(\Omega ; \mathbb{R})\right\}$ with the inner $\ddiv$ operator applied by row resulting a column vector for which the outer $\ddiv$ operator is applied, taking value in the space $\mathbb{S}$ of symmetric matrices in three dimensions.
%$\mathbb{T}$ and $\ms$ denote the spaces of traceless and symmetric matrices in three dimensions respectively, the space $RT$ is the shape function space of the lowest order Raviart-Thomas element \cite{brezzi2012mixed},the spaces $H^1(\Omega;\mathbb{R}^3)$ and $L^2(\Omega;\mathbb{R})$ are the vector-valued and scalar-valued Sobolev spaces respectively, the Sobolev space. 
%$H(\operatorname{sym}\ccurl,\Omega;\mathbb{T})$ consists of traceless matrix valued functions $\bm{u}\in L^{2}(\Omega;\mathbb{T})$ such that $\ssym\ccurl\bm{u}\in L^{2}(\Omega;\ms)$ with $\ssym\ccurl$ the symmetric part of the row-wise $\ccurl$ operator, and $H(\ddiv\ddiv,\Omega;\ms)$ is the space of symmetric matrix valued functions $\bsi\in L^{2}(\Omega;\ms)$ such that $\ddiv\ddiv\bsi\in L^{2}(\Omega)$ with the inner $\ddiv$ operator applied by row resulting a column vector for which the outer $\ddiv$ operator is applied.
Such a complex is exact provided that the domain $\Omega$ is contractible and Lipschitz \cite{MR4113080,2020Complexes}. Particularly,  conforming finite element spaces $\Sigma_{h}\subseteq H(\ddiv\ddiv,\Omega;\mathbb{S})$, $V_{h} \subseteq H^1(\Omega;\mathbb{R}^3)$, $\mathcal{U}_{h}\subseteq H(\operatorname{sym}\ccurl,\Omega;\mathbb{T})$, and $Q_{h}\subseteq L^{2}(\Omega;\mathbb{R})$ are constructed, such that the following discrete divdiv complex 
\begin{equation}\label{eq:DisComp}
R T \stackrel{\subseteq}{\longrightarrow} V_{h} \stackrel{\operatorname{dev} \operatorname{grad}}{\longrightarrow} \mathcal{U}_{h} \stackrel{\text { sym curl }}{\longrightarrow} \Sigma_{h} \stackrel{\text { div div }}{\longrightarrow}Q_{h} \rightarrow 0
\end{equation}
is exact.

%%%%%%%%%%% Einstein equ. 
One of the applications of the finite element divdiv complex of \eqref{eq:DisComp} is to solve the fourth order problems \cite{2020arXiv200501271C, 2020arXiv200712399C, 2020arXiv201002638H, MR4113080}. 
Besides, the construction of the finite element divdiv complex of \eqref{eq:DisComp} is closely related to the mixed formulation of the so-called linearized Einstein-Bianchi system \cite{MR3450102}. The first finite element sub-complex of the gradgrad complex (the dual complex of the divdiv complex) was constructed in \cite{2021Conforming}, and the finite element spaces were employed to solve the linearized Einstein-Bianchi system within the mixed form.
Recently, two finite element divdiv complexes of \eqref{eq:DisComp} on tetrahedral grids were constructed in \cite{2021arXiv210300088H,2020arXiv200712399C}, and the associated finite element spaces can be used to discretize the linearized Einstein-Bianchi system within the dual formulation introduced in \cite{2021arXiv210300088H}.
Both used the $H(\ddiv\ddiv,\Omega;\mathbb{S})$ conforming finite element on tetrahedral grids from \cite{2020arXiv200712399C}.
However, it is arduous to compute the basis functions of the finite element spaces in \cite{2021arXiv210300088H,2020arXiv200712399C}, since the degrees of freedom(DOFs) of those finite elements are in some sense complicated. More precisely, the design of the $H(\ddiv\ddiv,\Omega;\ms)$ conforming finite element in \cite{2020arXiv200712399C} was based on Green's identity \cite[Lemma 4.1]{2020arXiv200712399C}
\be
\label{chenhuang}
\begin{split}
(\ddiv\ddiv\bsi,q)_K=&(\bsi,\nabla^2 q)_K-\sum_{f\in\mathscr{F}(K)}\sum_{e\in\mathscr{E}(f)}(\mn_{f,e}^\intercal\bsi\mn_{f}, q)_{f}\\
&+\sum_{f\in\mathscr{F}(K)}[(\mn_{f}^\intercal \bsi\mn_f,\frac{\partial q}{\partial \mn_f})_f-(2\ddiv_f(\bsi\mn_f)+\frac{\partial(\mn_{f}^{\intercal}\bsi\mn_f)}{\partial\mn_f}, q)_f]
\end{split}
\ee
on tetrahedron $K$. Here $\bsi$ is a symmetric matrix valued function, $q$ is a scalar function, $\mathscr{F}(K)$ is the set of all faces of the tetrahedron $K$, $\mathscr{E}(f)$ is the set of all edges of face $f$, $\mn_f$ is the unit normal vector of $f$, $\mn_{f,e}$ denotes the unit normal vector of $e$ that parallels to $f$, and $\ddiv_f$ is the surficial divergence. 
Based on \eqref{chenhuang}, besides the continuity of $\mn_{f,e}^\intercal\bsi\mn_{f}$ across each edge $e$ and $\mn_{f}^\intercal \bsi\mn_f$ across each interface $f$, the continuity of $2\ddiv_f(\bsi\mn_f)+\frac{\partial(\mn_{f}^{\intercal}\bsi\mn_f)}{\partial\mn_f}$ across each interface $f$ is imposed on the functions of the $H(\ddiv\ddiv,\Omega;\ms)$ conforming finite element space in \cite{2020arXiv200712399C}. As a result, the last continuity brings a set of DOFs which combine two terms $2\ddiv_f(\bsi\mn_f)$ and $\frac{\partial(\mn_{f}^{\intercal}\bsi\mn_f)}{\partial\mn_f}$ on the boundary of $K$. Such a combination makes it difficult to figure out the explicit basis functions of the corresponding $H(\ddiv\ddiv,\Omega;\ms)$ conforming finite element space. In addition, such a combination is inherited by both the $H^1(\Omega;\mathbb{R}^3)$ and $H(\ssym\ccurl,\Omega;\mathbb{T})$ conforming finite element spaces in the finite element complex.
In a recent work \cite{2020arXiv201002638H}, new $H(\ddiv\ddiv,\Omega;\ms)$ conforming finite element spaces with explicit basis functions were constructed on both triangular and tetrahedral grids in a unified way. Instead of \eqref{chenhuang}, the design of \cite{2020arXiv201002638H} was based on the following identity
\ben
\begin{split}
(\ddiv\ddiv\bsi,q)_K=(\bsi,\nabla^2 q)_K-\sum_{f\in\mathscr{F}(K)}(\bsi\mn_f,\nabla q)_{f}+\sum_{f\in\mathscr{F}(K)}(\mn_f^{\intercal}\ddiv\bsi, q)_f.
\end{split}
\een
The continuity of $\bsi\mn_f$ and $\mn_f^\intercal\ddiv\bsi$ across each interface $f$ was imposed on the functions of the $H(\ddiv\ddiv,\Omega;\ms)$ conforming finite element space in \cite{2020arXiv201002638H}. Consequently, the element follows by strengthening the continuity of the normal component of the divergences of the functions of the $H(\ddiv,\Omega;\ms)$ conforming finite elements proposed in \cite{MR3529252, MR3352360, MR3301063}. Here $H(\ddiv,\Omega;\ms)$ is a space of square-integrable tensors with square-integrable divergence, taking values in the space $\ms$ of symmetric matrices. 
As it can be seen from \cite{2020arXiv201002638H} that because of a crucial structure of the $H(\ddiv\ddiv,\Omega;\ms)$ conforming finite element space, inherited from that of the $H(\ddiv,\Omega;\ms)$ finite element space, the basis functions can be easily written out. In addition, the $H(\ddiv\ddiv,\Omega;\ms)$ conforming finite element space in \cite{2020arXiv201002638H} is a little bit smoother than that in \cite{2020arXiv200712399C}. Recently, the construction in \cite{2020arXiv201002638H} has been generalized to arbitrary dimension \cite{2021arbitrary}.

%%%%%%%%%%更高正则性的优势
This paper is devoted to construct conforming finite element complexes of \eqref{eq:DisComp} on both cuboid and tetrahedral grids in three dimensions. The key part is to characterize the continuity of the finite element spaces appearing in the discrete complexes of \eqref{eq:DisComp}. 
Motivated by the sufficient continuity condition proposed in \cite{2020arXiv201002638H}, which is actually a characterization of the continuity of $H(\ddiv\ddiv,\Omega;\ms)\cap H(\ddiv,\Omega;\ms)$, a new $H(\ddiv\ddiv,\Omega;\ms)$ conforming finite element space $\Sigma_{h}$ is constructed in this paper.
The enhanced smoothness of $\Sigma_{h}$ brings corresponding enhancement of regularity for the finite element subspaces $\mathcal{U}_{h}\subseteq H(\ssym\ccurl,\Omega;\mathbb{T})$ and $V_{h}\subseteq H^1(\Omega;\mathbb{R}^3)$. 
In fact, a global regularity $\mathcal{U}_{h}\subseteq H^{*}(\ssym\ccurl,\Omega;\mathbb{T})$ is imposed for the $H(\ssym\ccurl,\Omega;\mathbb{T})$ conforming finite element space, where $H^{*}(\ssym\ccurl,\Omega;\mathbb{T})$ is a subspace of $H(\ssym\ccurl,\Omega;\mathbb{T})$ such that the column-wise divergences of matrix valued functions belong to the space $H(\ccurl,\Omega;\mathbb{R}^3)$.
Consequently, a global regularity $V_{h}\subseteq H^1(\ddiv,\Omega;\mathbb{R}^3)$ is required for the $H^{1}(\Omega;\mathbb{R}^{3})$ conforming finite element space, where $H^1(\ddiv,\Omega;\mathbb{R}^3)$ is a subspace of $H^{1}(\Omega;\mathbb{R}^{3})$ such that the divergences of vector valued functions are equally in the space $H^1(\Omega;\mathbb{R})$.
As a consequence, the conforming finite element divdiv complexes of \eqref{eq:DisComp} on cuboid grids and tetrahedral grids in three dimensions constructed in this paper are sub-complexes of the divdiv complex with the following enhanced smoothness:
\begin{equation}\label{divdivComp:higher:regularity}
\begin{aligned}
RT\stackrel{\subseteq}{\rightarrow}H^1(\ddiv,\Omega;&\mathbb{R}^3) \stackrel{\operatorname{dev}\ggrad}{\longrightarrow}H^{\ast}(\operatorname{sym}\ccurl,\Omega;\mathbb{T}) \\
&\stackrel{\operatorname{sym}\ccurl}{\longrightarrow}H(\operatorname{div}\ddiv,\Omega;\mathbb{S})\cap H(\ddiv,\Omega;\ms)\stackrel{\ddiv\ddiv}{\longrightarrow}L^2(\Omega;\mathbb{R})\stackrel{}{\rightarrow}0.
\end{aligned}
\end{equation}

On cuboid grids, the spaces $Q_{h}$, $\Sigma_{h}$, $\mathcal{U}_{h}$, and $V_{h}$ are noted as $Q_{k-2,\Box}$, $\Sigma_{k,\Box}$, $\mathcal{U}_{k,\Box}$, and $V_{k,\Box}$ with $k\geq 3$, respectively, where $Q_{k-2,\Box}$ is the space of discontinuous piecewise polynomials of degree no more than $k-2$ for each variable. For the spaces $\Sigma_{k,\Box}$, $\mathcal{U}_{k,\Box}$, and $V_{k,\Box}$, the associated shape function spaces will be determined by the polynomial de Rham complex and the polynomial divdiv complex, and the corresponding DOFs are identified by imposing the associated continuity requirements sufficiently. On tetrahedral grids, the spaces $Q_{h}$, $\Sigma_{h}$, $\mathcal{U}_{h}$, and $V_{h}$ are noted as $Q_{k-2,\triangle}$, $\Sigma_{k,\triangle}$, $\mathcal{U}_{k+1,\triangle}$, and $V_{k+2,\triangle}$ with $k\geq 4$, respectively, where $Q_{k-2,\triangle}$ is the space of discontinuous piecewise polynomials of degree no more than $k-2$, the space $\Sigma_{k,\triangle}$ is a modification of the $H(\ddiv\ddiv,\Omega;\ms)$ conforming finite element in \cite{2020arXiv201002638H}, the spaces $\mathcal{U}_{k+1,\triangle}$ and $V_{k+2,\triangle}$ are newly proposed herein. 

%%%%%%%%%%%%%%%
The rest of the paper is organized as follows. Section 2 introduces the notation. Section 3 constructs the finite element divdiv complex on cuboid grids. Section 4 constructs the finite element divdiv complex on tetrahedral grids.  The exactness of the discrete complexes will be proved in Section 5.

%%===============================
\section{Preliminaries}
Let $\Omega$ be a contractible domain with Lipschitz boundary $\partial \Omega$ of $\mathbb{R}^{3}$.
\ Let $\{\mathcal{T}_{h}\}$ denote a family of shape regular cuboid or tetrahedral grids on $\Omega$ with the mesh size $h$. \ Let $\mathscr{V}$, $\mathscr{E}$, and $\mathscr{F}$ denote the set of vertices,\ edges, and faces,\ respectively.
\ Given element $K\in \mt$, let $\mathscr{V}(K)$, $\mathscr{E}(K)$, and $\mathscr{F}(K)$ denote the sets of vertices, edges, and faces of $K$, respectively. Given an edge $e\in \mathscr{E}(K)$, the unit tangential vector $\bm{t}_{e}$, as well as two unit normal vectors $\bm{n}^{\pm}_{e}$ are fixed.  For a facet $f\in \mathscr{F}(K)$, the unit normal vector ${\bm{n}_{f}}$ as well as two unit tangential vectors $\bm{t}^{\pm}_{f}$ are fixed. The jump of $u$ across an interface $f\in {\mathscr{F}}$ shared by neighboring elements $K^{\pm}$ is defined by
$$\left [u\right]_f:=u|_{K^{+}}-u|_{K^{-}}.$$
When it comes to any boundary face $f\subseteq\partial \Omega$, the jump $[\cdot]_f$ reduces to the trace. Let $\nabla_{pw}^{k}$ denote the elementwise derivative of order $k$ for piecewise functions.

Denote by $\mathbb{M}$ the space of\ $3\times 3$\ real matrices,\ and let $\mathbb{S}$, $\mathbb{K}$, and $\mathbb{T}$ be the subspaces of symmetric, skew-symmetric, and traceless matrices,\ respectively. Let $\bm{I}$ denote the $3\times3$ identity, and $\bm{I}_1:=(1, 0, 0)^{\intercal}$, $\bm{I}_2:=(0, 1, 0)^{\intercal}$, as well as $\bm{I}_3:=(0, 0, 1)^{\intercal}$. For matrix valued function $\bsi:\Omega \rightarrow \mathbb{M}$, define the symmetric part and traceless part as follows, respectively,
\begin{equation*}
	\ssym \bsi: = \frac{1}{2}(\bsi + \bsi^{\intercal}),\quad \ddev \bsi: = \bsi - \frac{1}{3}\operatorname{tr}(\bsi)\bm{I}.
\end{equation*}

 For simplicity of presentation, for $\mathbf{x}=(x, y, z)^{\intercal}\in \Omega$,\ let $x_{i}$ be $x, y, z$ when $i$ takes $1,2$ and $3$, respectively, and $\partial/\partial {x_{i}}$ be abbreviated as $\partial_{i}$. 
For a vector field $\bm{v}=(v_1,v_2,v_3)^{\intercal}$, the gradient applies by row to produce a matrix valued function, namely $\ggrad \bm{v}:=\nabla \bm{v}:=(\partial_{j}v_{i})_{3\times 3}$. Define the symmetric gradient by $\varepsilon(\bm{v}):=\ssym (\nabla \bm{v})$ and define a skew symmetric matrix
\begin{equation*}
	\operatorname{mspn} \bm{v}:=\left(\begin{array}{ccc}
		0 & -v_{3} & v_{2} \\
		v_{3} & 0 & -v_{1} \\
		-v_{2} & v_{1} & 0
	\end{array}\right).
\end{equation*}

Given a facet $f\in \mathscr{F}$ with the unit normal vector $\mn_f$, for a vector $\bm{v}\in \mathbb{R}^3$ as well as a scalar function $q$, define
\ben
\Pi_f\bm{v}:=(\mn_f\times \bm{v})\times \mn_f, \quad \nabla_f q:=\Pi_f \nabla q.
\een
Define the surface symmetric gradient $\varepsilon_f$ by 
\ben
\varepsilon_f(\bm{v}):=\ssym (\nabla_f(\Pi_f\bm{v})),
\een 
and the surface rot  operator is defined by 
\ben
\rrot_f\bm{v}:=(\mn_f\times \nabla)\cdot \bm{v}=\mn_f \cdot \bc \bm{v}.
\een

For a matrix field, the operators $\ccurl$ and $\ddiv$ apply by row, and $\ccurl^{*}$ and $\ddiv^{*}$ apply by column to produce a matrix field and vector field, respectively. Given a plane $f$ with unit normal vector $\mn_f$, denote 
\be
\label{Qf}
\mathcal{Q}_f:=\bm{I}-\bm{n}_{f}\bm{n}_{f}^{\intercal}
\ee
the orthogonal projection onto $f$.

Given a matrix valued function $\bsi$, define the symmetric projection $\mathcal{Q}_{f,\ssym}\bsi:=\ssym(\mathcal{Q}_f \bsi)$, 
and for symmetric $\bsi$, define $\Lambda_{f}(\bsi): f\rightarrow \mathcal{Q}_f\ms \mathcal{Q}_f$ by
\be
\label{lambdaf}
\Lambda_{f}(\bsi):=\mathcal{Q}_f(2\varepsilon(\bsi\mn_f)-\frac{\partial\bsi}{\partial\mn_{f}}) \mathcal{Q}_f.
\ee

Assume that $D\subseteq \mathbb{R}^{3}$ is a bounded and topologically trivial domain. 
Let standard notation $H^{m}(D;X)$ denote the Sobolev space consisting of functions within domain $D$,\ taking values in space $X$,\ and with all derivatives of order at most $m$ square-integrable.\ In the case $m=0$,\ set $H^0(D;X) = L^2(D;X)$.\ 
The $L^2$ scalar product over $D$ is denoted as $(\cdot,\cdot)_{D}$, $\Vert{\cdot}\Vert_{0,D}$ denotes the $L^2$ norm over a set $D$, and $\Vert{\cdot}\Vert_0$ abbreviates $\Vert{\cdot}\Vert_{0,\Omega}$. Similarly, let $C^{m}(D;X)$ denote the space of $m$-times continuously differentiable functions, taking values in $X$.\ In this paper,\ $X$ could be $\mathbb{M},\mathbb{S},\mathbb{T},\mathbb{R}$ or $\mathbb{R}^3$.\ Define
\begin{equation*}
	\begin{aligned}
		H(\ssym\ccurl, D ; \mathbb{T}):=&\left\{\bsi \in L^{2}(D ; \mathbb{T}): \ssym\ccurl \bsi\in L^{2}(D ; \mathbb{S})\right\},\\
		H(\ddiv\ddiv, D ; \mathbb{S}):=&\left\{\bsi \in L^{2}(D ; \mathbb{S}): \ddiv\ddiv \bsi \in L^{2}(D ; \mathbb{R})\right\}.
	\end{aligned}
\end{equation*} 
Let $P_{k}(D;X)$ denote the space of polynomials of degree no more than $k$ on $D$,\ taking values in the space $X$. Let $Q_{k}(D; X)$ denote the space of polynomials on $D$ of degree  no more than $k$ for each variable, taking values in the space $X$. It will be convenient to define
\ben
P_{k_1}(x_i)\cdot P_{k_2}(x_j) \cdot P_{k_3}(x_l):=\left\{\sum_{0 \leq s_1\leq k_{1},0 \leq s_2\leq k_{2}
	\atop
	0\leq s_3\leq k_{3}}c_{s_1 s_2 s_3}\, x_{i}^{s_1}x_{j}^{s_2}x_{l}^{s_3} :\, c_{s_1 s_2 s_3}\in \mathbb{R}\right\}.
\een
Here $s_1$, $s_2$, $s_3$, $k_1$, $k_2$, and $k_3$ are nonnegative integers, and $\{i, j, l\}$ is a permutation of $\{1,2,3\}$. In particular, for $\mathbf{x}=(x_1,x_2,x_3)^\intercal\in D$, denote
\ben
P_{k_1 k_2 k_3}(D):=P_{k_1}(x_1)\cdot P_{k_2}(x_2)\cdot P_{k_3}(x_3).
\een

Given two spaces $\mathcal{Q}_1(D)$ and $\mathcal{Q}_2(D)$ with $\mathcal{Q}_2(D)\subseteq \mathcal{Q}_1(D)$, let $\mathcal{Q}_1(D)\slash \mathcal{Q}_2(D)$ denote the space of $q\in \mathcal{Q}_1(D)$ such that $(q, p)_D=0$ for any $p\in \mathcal{Q}_2(D)$. 

For $\mathbf{x}=(x, y, z)^{\intercal}\in D$, denote
\be
RM(D):=\left\{\begin{pmatrix}
	c_{1}-c_{4}y-c_{5}z\\
	c_{2}+c_{4}x-c_{6}z\\
	c_{3}+c_{5}x+c_{6}y\\
\end{pmatrix} : c_{1}, c_{2}, c_{3}, c_{4}, c_{5}, c_{6} \in \mathbb{R}
\right\}
\ee
as the 6-dimensional space of the rigid motions. Besides, $$RT(D):=\{a\mathbf{x}+\bm{b}:\, a\in\mb, \bm{b}\in \mb^{3}\}$$ with ${\rm{dim}}\, RT(D)=4$ is the shape function space of the lowest order Raviart-Thomas element\cite{brezzi2012mixed}. If the domain $D$ is clear, the spaces $RM(D)$ and $RT(D)$ are recorded as $RM$ and $RT$, respectively.

Let $\operatorname{ker}(\mathscr{D})$ be the kernel space of operator $\mathscr{D}$. Let $\overline{n}$ denote the congruence of integer $n$ with modulus 3, namely, 
\be
\label{overline}
\overline{n}\equiv n(\mmod 3), ~~~~~\overline{n}=1, 2, 3.
\ee

For $\bm{u}=(u_{ij})_{3\times 3}$, let $\mathscr{P}_n\bm{u}:=(u_{11}, u_{22}, u_{33})^{\intercal}$ denote the normal stress and $\mathscr{P}_t\bm{u}:= (u_{12}, u_{13}, u_{23})^{\intercal}$ as the shear stress.  

%%%%%%%%
%%%%%%%%
\section{The finite element spaces on cuboid grids}
Let $\mathcal{T}_{\Box}$ be a cuboid grid of the domain $\Omega\subseteq\mathbb{R}^{3}$.
This section constructs finite element subspaces $V_{k,\Box}\subseteq H^1(\Omega;\mathbb{R}^3)$, $\mathcal{U}_{k,\Box}\subseteq H(\ssym\ccurl,\Omega;\mathbb{T})$, and $\Sigma_{k,\Box}\subseteq H(\ddiv\ddiv,\Omega;\mathbb{S})$ on cuboid grids. 
It will be proved that these finite element subspaces form an exact discrete divdiv complex of \eqref{eq:DisComp}.

In the endeavor to acquire the exact finite element divdiv complex of \eqref{eq:DisComp} on $\mathcal{T}_{\Box}$, one of the challenging tasks is to construct the $H(\ddiv\ddiv,\Omega;\ms)$ conforming finite element space $\Sigma_{k,\Box}$. 
%To design such a finite element space with explicit basis functions, the characterization of the $H(\ddiv\ddiv,\Omega;\ms)$ conforming finite element space in \cite{2020arXiv201002638H} is adopted.
It is not easy to get a unisolvent $H(\ddiv\ddiv,\Omega;\ms)$ conforming finite element space on cuboid grids 
by strengthening the continuity of the normal component of the divergences of the symmetric matrix valued functions of the $H(\ddiv,\Omega;\ms)$ conforming finite element on cuboid grids as in \cite{2020arXiv201002638H,2021arbitrary}. To attack this difficulty, the de Rham complex and the divdiv complex which consist of polynomials over hexahedrons are proposed below to determine the shape function space. Besides, the continuity as that of the $H(\ddiv\ddiv,\Omega;\ms)$ conforming finite element space in \cite{2020arXiv201002638H} is imposed sufficiently to construct the DOFs. Then the $H(\ddiv\ddiv,\Omega;\ms)$ conforming finite element space $\Sigma_{k,\Box}$ follows, which is actually a subspace of $H(\ddiv\ddiv,\Omega;\ms)\cap H(\ddiv,\Omega;\ms)$. As a consequence, there is corresponding enhanced smoothness for the remaining finite element spaces $\mathcal{U}_{k,\Box}$ and $V_{k,\Box}$ in the discrete divdiv complex.

%the corresponding bubble function space (the null space of the trace operator) is figured out and the $H(\ddiv\ddiv,\Omega;\ms)$ conforming finite element space $\Sigma_{k,\Box}$ follows. In addition, the construction of the finite element spaces $\mathcal{U}_{k,\Box}\subseteq H(\ssym\ccurl,\Omega;\mathbb{T})$ and $V_{k,\Box}\subseteq H^1(\Omega;\mathbb{R}^3)$ with enhanced smoothness is similar.
%the global regularity $\Sigma_{k,\Box}\subseteq H(\ddiv\ddiv,\Omega;\ms)\cap H(\ddiv,\Omega;\ms)$ stemming from 

For a cuboid partition $\mathcal{T}_{\Box}$, on any cube $K\in \mathcal{T}_{\Box}$ with $K=(x_{a}, x_{b})\times (y_{a}, y_{b})\times (z_{a}, z_{b})$, let
\be
\label{def:bKi}
\begin{split}
&b_{K,x}:=\frac{(x-x_{a})(x-x_{b})}{h_{K,x}^{2}}, ~~~~b_{K,y}:=\frac{(y-y_{a})(y-y_{b})}{h_{K,y}^{2}},\\ &b_{K,z}:=\frac{(z-z_{a})(z-z_{b})}{h_{K,z}^{2}}.
\end{split}
\ee
Here $h_{K,x}:=x_{b}-x_{a}$, $h_{K,y}:=y_{b}-y_{a}$, and $h_{K,z}:=z_{b}-z_{a}$. Let $b_{K}:=b_{K,x}b_{K,y}b_{K,z}$.  And $b_{K, x_{i}}$ is abbreviated as $b_{K,i}$ in the text for $i=1,2,3$. Let $E_i(K)\subseteq \mathscr{E}(K)$ be the set of all edges of $K$ which parallel to the $x_i$-axis and let $F_i(K)\subseteq \mathscr{F}(K)$ be the set of all faces of $K$ that perpendicular to the $x_i$-axis. Let $F_i(K)\cup F_j(K)$ denote the set of all faces of $K$ which are either in $F_{i}(K)$ or in $ F_{j}(K)$. Let $F_i\subseteq \mathscr{F}$ be the set of all faces of $\mathcal{T}_{\Box}$ that perpendicular to the $x_i$-axis.
Hereafter, if not special specified, $\{i, j, l\}$ is a permutation of $\{1,2,3\}$.

\subsection{Polynomial complexes}
Two polynomial complexes are established in this subsection. They will be used in the construction of the discrete complex of \eqref{eq:DisComp} on $\mathcal{T}_{\Box}$ below. In particular, the shape function spaces of $V_{k,\Box}$, $\mathcal{U}_{k,\Box}$, and $\Sigma_{k,\Box}$ are determined, respectively.

Given a topological trivial domain $D\subseteq \mathbb{R}^3$, define 
\be
\label{spacedef:N}
\begin{split}
	&M_{[k]}(D;\mb^{3}):= P_{k-1, k, k}(D)\times P_{k, k-1, k}(D)\times P_{k, k, k-1}(D),\\
	&V_{[k]}(D;\mb^{3}):= P_{k, k-1, k-1}(D)\times P_{k-1, k, k-1}(D)\times P_{k-1, k-1, k}(D).
\end{split}
\ee
The exact smooth de Rham complex \cite{2020Complexes} reads
\be
\label{smooth:de}
\mathbb{R}\stackrel{\subseteq}{\longrightarrow}C^{\infty}(D;\mathbb{R})\stackrel{{\nabla}}{\longrightarrow} C^{\infty}(D;\mb^{3})\stackrel{{\bc}}{\longrightarrow}C^{\infty}(D;\mb^{3})\stackrel{{\rd}}{\longrightarrow} C^{\infty}(D;\mathbb{R}){\longrightarrow} 0.
\ee

\begin{lm}
	\label{complex:de}
	On any topological trivial domain $D\subseteq \mathbb{R}^3$, the following polynomial sequence 
	\ben
	~~~~~~~~~\mb\stackrel{\subseteq}{\longrightarrow}Q_{k-1}(D;\mathbb{R})\stackrel{{\nabla}}{\longrightarrow} M_{[k-1]}(D;\mb^{3})\stackrel{{\bc}}{\longrightarrow}V_{[k-1]}(D;\mb^{3})\stackrel{{\rd}}{\longrightarrow} Q_{k-2}(D;\mathbb{R}){\longrightarrow} 0
	\een
	 is an exact de Rham complex.
\end{lm}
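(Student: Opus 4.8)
The plan is to verify that the sequence
\ben
\mb\stackrel{\subseteq}{\longrightarrow}Q_{k-1}(D;\mathbb{R})\stackrel{{\nabla}}{\longrightarrow} M_{[k-1]}(D;\mb^{3})\stackrel{{\bc}}{\longrightarrow}V_{[k-1]}(D;\mb^{3})\stackrel{{\rd}}{\longrightarrow} Q_{k-2}(D;\mathbb{R}){\longrightarrow} 0
\een
is a complex and then that it is exact, proceeding term by term. The first observation is that this is a subcomplex of the smooth de Rham complex \eqref{smooth:de}: it suffices to check that each operator maps the indicated tensor-product polynomial space into the next. For instance, if $q\in Q_{k-1}(D;\mb)$, then $\partial_1 q\in P_{k-2,k-1,k-1}(D)\subseteq P_{k-2,k-1,k-1}$, and organizing the three partials by their degree drop in each coordinate shows $\nabla q\in M_{[k-1]}(D;\mb^3)$ by the definition in \eqref{spacedef:N}. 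A completely analogous degree-counting check for $\bc$ acting on $M_{[k-1]}$ and for $\rd$ acting on $V_{[k-1]}$ confirms the mapping properties; since $\bc\nabla=0$ and $\rd\bc=0$ hold identically, the sequence is a complex.

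For exactness I would argue locally at each node, using that $D$ is topologically trivial so the smooth complex \eqref{smooth:de} is exact. First I would establish injectivity of $\nabla$ on $Q_{k-1}(D;\mb)/\mb$ and surjectivity of $\rd$ onto $Q_{k-2}(D;\mb)$; surjectivity is the easiest step since, e.g., $\rd(x_1 p,0,0)=\partial_1(x_1 p)$ can reach any monomial of the target by adjusting degrees, and one checks $(x_1 p,0,0)$ lands in $V_{[k-1]}$ when $p\in Q_{k-2}$. The two interior exactness statements at $M_{[k-1]}$ and $V_{[k-1]}$ are the substance of the lemma. For a closed form $\bm{u}\in M_{[k-1]}$ with $\bc\bm{u}=0$, exactness of \eqref{smooth:de} gives a smooth potential $q$ with $\nabla q=\bm{u}$; the task is to show $q$ may be taken in $Q_{k-1}(D;\mb)$. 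Since $\partial_i q=u_i$ is a polynomial, $q$ is a polynomial, and reading off the coordinatewise degrees of each $u_i$ from the factors $P_{k-2,k-1,k-1}$ etc.\ pins down that $q\in Q_{k-1}$. The analogous lifting at $V_{[k-1]}$ is handled the same way.

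The cleanest and most rigorous route, however, is a dimension count combined with the already-verified complex property. I would compute $\dim Q_{k-1}=k^3$, $\dim M_{[k-1]}=3k^2(k-1)$, $\dim V_{[k-1]}=3k(k-1)^2$, and $\dim Q_{k-2}=(k-1)^3$, then verify the alternating sum
\ben
\dim Q_{k-1}-\dim M_{[k-1]}+\dim V_{[k-1]}-\dim Q_{k-2}=1,
\een
which matches the Euler characteristic of the exact augmented complex (accounting for the $\mb$ at the left and $0$ at the right). Combined with the complex property $\operatorname{im}\subseteq\rk$ at every node and exactness at the two endpoints (injectivity modulo constants at $Q_{k-1}$ and surjectivity at $Q_{k-2}$, both established directly above), the Euler-characteristic identity forces equality $\operatorname{im}=\rk$ at the interior nodes as well, giving full exactness.

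The main obstacle I anticipate is the interior exactness, specifically confirming that a smooth potential guaranteed by \eqref{smooth:de} actually lies in the correct \emph{anisotropic} tensor-product space rather than merely in $P_k(D)$; the bookkeeping of per-variable degrees across the three components of $M_{[k-1]}$ and $V_{[k-1]}$ is where an error is most likely to creep in. For that reason I prefer to lean on the dimension count, reducing the hard analytic step to the two endpoint verifications, which are transparent, and letting linear algebra propagate exactness inward.
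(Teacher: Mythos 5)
Your preferred route has a genuine gap: an Euler-characteristic count cannot ``propagate exactness inward'' past \emph{two} consecutive interior nodes. Writing $H^i$ for the cohomology of the augmented complex, your verified identity gives $\dim H^0-\dim H^1+\dim H^2-\dim H^3=0$; your two endpoint arguments give $H^0=0$ (injectivity of $\nabla$ modulo constants) and $H^3=0$ (surjectivity of $\rd$, which your construction $\rd(x_1p,0,0)=\partial_1(x_1p)$ does establish correctly). But these facts only yield $\dim H^1=\dim H^2$, not $H^1=H^2=0$: the linear algebra closes only when at most one interior cohomology is unknown. Your fallback for the node at $V_{[k-1]}$ --- ``the analogous lifting is handled the same way'' --- also fails: a gradient potential is unique up to a constant, so the per-variable degrees of $\nabla q$ pin down $q\in Q_{k-1}(D;\mathbb{R})$ directly (this half of your second paragraph is correct, and it is exactly the paper's argument at $M_{[k-1]}$); a curl potential, by contrast, is unique only up to gradients, and a polynomial $\bm{u}$ with $\bc\bm{u}=\bm{\phi}\in V_{[k-1]}(D;\mb^3)$ need not lie in $M_{[k-1]}(D;\mb^3)$ --- you would have to show it can be corrected by a gradient so as to land there, which is precisely the nontrivial content at that node and cannot be waved through.

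The repair is already contained in your own write-up: keep the direct potential argument at $M_{[k-1]}$, so that $H^0=H^1=H^3=0$, and then your dimension identity $k^3-3k^2(k-1)+3k(k-1)^2-(k-1)^3=1$ forces $H^2=0$ as well. That assembled argument is correct and is essentially the paper's proof in disguise: the paper proves exactness at $M_{[k-1]}$ by exactly your degree bookkeeping, and then, instead of an abstract Euler count, establishes the Koszul-type direct sum $V_{[k-1]}(D;\mb^{3})=\bc\, M_{[k-1]}(D;\mb^{3})\oplus \mathbf{x}Q_{k-2}(D;\mathbb{R})$ --- trivial intersection via the injectivity of $q\mapsto\rd(\mathbf{x}q)$ on $Q_{k-2}(D;\mathbb{R})$, plus a dimension count --- which simultaneously delivers exactness at $V_{[k-1]}$ and surjectivity of $\rd$ in one stroke.
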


\begin{proof}
	For any $q\in Q_{k-1}(D;\mathbb{R})$ with $\nabla q=0$, it follows that $q\in\mb$. This is
	\be
	\label{complex:de:temp:1}
	\rk (\nabla)\cap Q_{k-1}(D;\mathbb{R})=\mb.
	\ee
If $\bm{u}\in M_{[k-1]}(D;\mb^{3})$ with $\bc\bm{u}=0$, then the exactness of \eqref{smooth:de} shows that there exists some $q\in C^{\infty}(D;\mathbb{R})$ such that $\bm{u}=\nabla q$. Since the $i$-th component of $\nabla q$ belongs to $P_{k}(x_{\overline{i-1}})\cdot P_{k-1}(x_{i})\cdot P_{k}(x_{\overline{i+1}})$ for $i=1,2,3$, simple arguments lead to $q\in Q_{k-1}(D;\mathbb{R})$. Therefore 
	\be
	\label{complex:de:temp:2}
	\rk(\bc)\cap M_{[k-1]}(D;\mb^{3})=\nabla Q_{k-1}(D;\mathbb{R}).
	\ee
	To show the exactness, it suffices to show that the decomposition 
	\be
	\label{directsum:1}
	V_{[k-1]}(D;\mb^{3})=\bc\, M_{[k-1]}(D;\mb^{3})\oplus \mathbf{x}Q_{k-2}(D;\mathbb{R})
	\ee
	is a direct sum. To this end, given $\bm{\phi}\in \bc\, M_{[k-1]}(D;\mb^{3})\cap \mathbf{x}Q_{k-2}(D;\mathbb{R})$, 
	$\bm{\phi}=\bc\bm{u}$ for some $\bm{u}\in M_{[k-1]}(D;\mb^{3})$. On the other hand, $\bm{\phi}= \mathbf{x}q$ for some $q\in Q_{k-2}(D;\mathbb{R})$. Since $\ddiv (\mathbf{x}Q_{k-2})= Q_{k-2}$, it follows immediately that $q=0$. Hence $\bm{\phi}=0$ and $ \bc\, M_{[k-1]}(D;\mb^{3})\cap \mathbf{x}Q_{k-2}(D;\mathbb{R})=\{{0}\}$ follows. 
	
	Furthermore, it is obvious that $\bc\, M_{[k-1]}(D;\mb^{3})$ and $\mathbf{x}Q_{k-2}(D;\mathbb{R})$ are subspaces of $V_{[k-1]}(D;\mb^{3})$. Note that \eqref{complex:de:temp:1}--\eqref{complex:de:temp:2} lead to 
	\ben
	\begin{split}
		&{\rm{dim}}\bc M_{[k-1]}(D;\mb^{3})+{\rm{dim}}\, \mathbf{x}Q_{k-2}(D;\mathbb{R})\\&={\rm{dim}}\, M_{[k-1]}(D;\mb^{3})-{\rm{dim}}\, Q_{k-1}(D;\mathbb{R})+1+{\rm{dim}}\, Q_{k-2}(D;\mathbb{R})\\
		&=3k^{2}(k-1)-k^{3}+1+(k-1)^{3}=3k^{3}-6k^{2}+3k={\rm{dim}}V_{[k-1]}(D;\mb^{3}).
	\end{split}
	\een
	This proves \eqref{directsum:1}.
	
	If $\bm{\phi}\in V_{[k-1]}(D;\mb^{3})$ with $\rd\bm{\phi}=0$, then \eqref{directsum:1} shows that there exists some $\bm{u}\in M_{[k-1]}(D;\mb^{3})$ such that $\bm{\phi}=\bc\bm{u}$. This implies \, ${\rk}(\rd)\cap V_{[k-1]}(D;\mb^{3})=\bc M_{[k-1]}(D;\mb^{3})$. Moreover, by \eqref{directsum:1}, it is straightforward to get $${\rd}V_{[k-1]}(D;\mb^{3})=Q_{k-2}(D;\mathbb{R}).$$ This concludes the proof.
\end{proof}
%==============
\begin{re}
	Actually, for nonnegative integers $k_{1}$, $k_{2}$, and $k_{3}$, the exactness of a more general polynomial de Rham complex 
	\begin{small}
		\be
		\label{general:derham}
		\begin{split}
		\mb\stackrel{\subseteq}{\longrightarrow}P_{k_{1}+1,k_{2}+1,k_{3}+1}(D)&\stackrel{{\nabla}}{\longrightarrow} M_{[k_{1}k_{2}k_{3}]}(D;\mb^{3})\\
		&\stackrel{{\bc}}{\longrightarrow}V_{[k_{1}k_{2}k_{3}]}(D;\mb^{3})\stackrel{{\rd}}{\longrightarrow} P_{k_{1},k_{2},k_{3}}(D){\longrightarrow} 0
		\end{split}
		\ee
	\end{small}on a contractible domain $D$ can be proved through similar arguments as that of Lemma \ref{complex:de}. Here $$M_{[k_{1}k_{2}k_{3}]}(D;\mb^{3}):=P_{k_{1},k_{2}+1,k_{3}+1}(D)\times P_{k_{1}+1,k_{2},k_{3}+1}(D)\times P_{k_{1}+1,k_{2}+1,k_{3}}(D),$$ and $$V_{[k_{1}k_{2}k_{3}]}(D;\mb^{3}):=P_{k_{1}+1,k_{2},k_{3}}(D)\times P_{k_{1},k_{2}+1,k_{3}}(D)\times P_{k_{1},k_{2},k_{3}+1}(D).$$
\end{re}

Define
	\be
	\label{sigma:poly}
	\begin{split}
		\Sigma_{[k]}(D;\ms):=\{&\bsi\in H^{1}(D;\ms):  \\
		&\mathscr{P}_n\bsi\in P_{k, k-2, k-2}(D)\times P_{k-2, k, k-2}(D)\times P_{k-2, k-2, k}(D),\\
		& \mathscr{P}_t\bsi \in  P_{k-1, k-1, k-2}(D)\times P_{k-1, k-2, k-1}(D)\times P_{k-2, k-1, k-1}(D)\},\\
	\end{split}
	\ee
	and
\be
\label{U:3d:def}
\begin{split}
	\mathcal{U}_{[k]}(D;\mathbb{T}):=\{&\bm{u}\in H^{1}(D;\mathbb{T}):  \mathscr{P}_n\bm{u}\in Q_{k-1}(D;\mb^{3}), \\
	&\mathscr{P}_t\bm{u} \in P_{k, k-2, k-1}(D)\times P_{k, k-1, k-2}(D)\times P_{k-1, k, k-2}(D),\\
	& \mathscr{P}_t(\bm{u}^{\intercal})\in  P_{k-2, k, k-1}(D)\times P_{k-2, k-1, k}(D)\times P_{k-1, k-2, k}(D)\}.\\
\end{split}
\ee

\begin{lm}
	\label{complex:poly3d:3}
	The following sequence is an exact polynomial divdiv complex on a topological trivial domain $D\subseteq \mb^{3}$.
	\ben
	~~~~~~~~~RT\stackrel{\subseteq}{\longrightarrow}V_{[k]}(D;\mb^{3})\xlongrightarrow[~]{\rm{dev}\ggrad} \mathcal{U}_{[k]}(D;\mathbb{T})\xlongrightarrow[~]{\rm{sym}\bc}\Sigma_{[k]}(D;\ms)\xlongrightarrow[~]{\rd\bd} Q_{k-2}(D;\mathbb{R}){\longrightarrow} 0.
	\een
\end{lm}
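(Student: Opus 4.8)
The plan is to establish exactness of this polynomial divdiv complex
\ben
RT\stackrel{\subseteq}{\longrightarrow}V_{[k]}(D;\mb^{3})\xlongrightarrow[~]{\rm{dev}\ggrad} \mathcal{U}_{[k]}(D;\mathbb{T})\xlongrightarrow[~]{\rm{sym}\bc}\Sigma_{[k]}(D;\ms)\xlongrightarrow[~]{\rd\bd} Q_{k-2}(D;\mathbb{R}){\longrightarrow} 0
\een
by verifying exactness separately at each of the four nonterminal slots, combining polynomial containment checks, kernel identifications, and a final dimension count, in close parallel to the strategy used for the de Rham complex in Lemma \ref{complex:de}. First I would check that the complex is well defined, i.e.\ that each map sends its domain into the claimed target space: that $\ddev\ggrad$ maps $V_{[k]}(D;\mb^3)$ into $\mathcal{U}_{[k]}(D;\mathbb{T})$, that $\ssym\bc$ maps $\mathcal{U}_{[k]}(D;\mathbb{T})$ into $\Sigma_{[k]}(D;\ms)$, and that $\rd\bd$ maps $\Sigma_{[k]}(D;\ms)$ into $Q_{k-2}(D;\mathbb{R})$. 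These are componentwise polynomial-degree bookkeeping arguments driven entirely by the anisotropic degree patterns built into the definitions \eqref{spacedef:N}, \eqref{sigma:poly}, and \eqref{U:3d:def}; the degree ranges in those definitions have evidently been reverse-engineered so these containments hold, and moreover so that the symmetry/tracelessness constraints ($\ssym$, $\ddev$) are respected by the images. That the composite of consecutive maps vanishes is inherited from the smooth divdiv complex of \eqref{divdivComp}.

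Next I would identify the kernels at the left end. At the first slot, I would show $\rk(\ddev\ggrad)\cap V_{[k]}(D;\mb^3)=RT$: since $D$ is topologically trivial, the smooth complex gives $\ddev\ggrad\bm{v}=0\iff\bm{v}\in RT$, and because $RT\subseteq V_{[k]}(D;\mb^3)$ this intersection is exactly $RT$, establishing exactness at $V_{[k]}$. For exactness at $\mathcal{U}_{[k]}(D;\mathbb{T})$, I would argue that if $\bm{u}\in\mathcal{U}_{[k]}$ with $\ssym\bc\,\bm{u}=0$, then by exactness of the smooth divdiv complex there is a smooth $\bm{v}$ with $\bm{u}=\ddev\ggrad\bm{v}$, after which a degree-tracking argument (recovering $\bm{v}$ componentwise from the entries of $\bm{u}$, exactly as $q\in Q_{k-1}$ was recovered from $\nabla q$ in Lemma \ref{complex:de}) forces $\bm{v}\in V_{[k]}(D;\mb^3)$ modulo $RT$.

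The crux, as in the de Rham proof, is exactness at $\Sigma_{[k]}(D;\ms)$ together with surjectivity onto $Q_{k-2}(D;\mathbb{R})$, and I expect this to be the main obstacle. I would try to establish a direct-sum decomposition analogous to \eqref{directsum:1}, of the form
\ben
\Sigma_{[k]}(D;\ms)=\ssym\bc\,\mathcal{U}_{[k]}(D;\mathbb{T})\oplus W,
\een
where $W$ is an explicitly chosen complement mapped isomorphically onto $Q_{k-2}(D;\mathbb{R})$ by $\rd\bd$ (the natural candidate being a space built from $\mathbf{x}\mathbf{x}^{\intercal}$ times scalars, playing the role that $\mathbf{x}Q_{k-2}$ played before). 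Proving the intersection is trivial, i.e.\ $\ssym\bc\,\mathcal{U}_{[k]}\cap W=\{0\}$, is the delicate algebraic step, since the interaction of $\ssym\bc$ with the anisotropic degree pattern of $\Sigma_{[k]}$ is less transparent than the simple $\ddiv(\mathbf{x}Q_{k-2})=Q_{k-2}$ relation used earlier. Once both halves are identified as subspaces of $\Sigma_{[k]}$, the decomposition follows by a dimension count: using the already-established kernel identities to compute $\dim\ssym\bc\,\mathcal{U}_{[k]}=\dim\mathcal{U}_{[k]}-\dim V_{[k]}+\dim RT$, one checks
\ben
\dim\ssym\bc\,\mathcal{U}_{[k]}(D;\mathbb{T})+\dim W=\dim\Sigma_{[k]}(D;\ms),
\een
which simultaneously yields $\rk(\rd\bd)\cap\Sigma_{[k]}=\ssym\bc\,\mathcal{U}_{[k]}$ (exactness at $\Sigma_{[k]}$) and $\rd\bd\,\Sigma_{[k]}=Q_{k-2}(D;\mathbb{R})$ (surjectivity at the right end), concluding the proof. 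The dimension arithmetic is routine but bulky given the product-polynomial spaces involved; the genuine difficulty is constructing $W$ and verifying the trivial-intersection property cleanly.
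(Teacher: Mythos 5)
Your overall skeleton coincides with the paper's proof: well-definedness by degree bookkeeping, identification of the kernels at the first two slots via the smooth complex \eqref{divdivComp}, and a final dimension count showing $\dim\, \ssym\bc\, \mathcal{U}_{[k]}=\dim\, (\rk(\rd\bd)\cap\Sigma_{[k]})=5k^3-3k^2-6k+4$. The one genuine divergence is at the $\Sigma_{[k]}$ slot, and there your write-up has a hole where the paper has a proof. The paper never constructs a complement $W$: it obtains surjectivity ${\rd}{\bd}\,\Sigma_{[k]}(D;\ms)=Q_{k-2}(D;\mathbb{R})$ by applying the anisotropic de Rham complex \eqref{general:derham} \emph{recursively, row-wise} --- each row of $\bsi\in\Sigma_{[k]}(D;\mathbb{M})$ lies in a space of the form $V_{[k_1k_2k_3]}$, so the inner $\ddiv$ is onto, and $\ddiv\bsi$ again lies in such a space, so the outer $\ddiv$ is onto $Q_{k-2}$ --- combined with the observation that $\rd\bd$ annihilates skew-symmetric fields, so restricting to symmetric $\bsi$ loses nothing; exactness at $\Sigma_{[k]}$ then follows from the dimension count alone. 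Your alternative via a direct sum analogous to \eqref{directsum:1} is viable, and in fact easier than you fear: the natural candidate $W=\mathbf{x}\mathbf{x}^{\intercal}Q_{k-2}(D;\mathbb{R})$ does lie in $\Sigma_{[k]}(D;\ms)$ (check the entries: $x_i x_j q$ has exactly the degree pattern of \eqref{sigma:poly}), and for homogeneous $q$ of degree $m$ one computes $\rd\bd(\mathbf{x}\mathbf{x}^{\intercal}q)=(m+3)(m+4)q$, so $\rd\bd$ acts diagonally on the monomial basis of $W$ and is bijective onto $Q_{k-2}$; the trivial-intersection property is then immediate, not delicate, since $\ssym\bc\,\mathcal{U}_{[k]}\subseteq\rk(\rd\bd)$ forces $q=0$. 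But as submitted, you named the candidate and deferred both the verification that $W\subseteq\Sigma_{[k]}$ and the intersection argument, so the crux of your proof is a placeholder; either the eigenvalue computation above or the paper's recursive de Rham argument is needed to close it.

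There is also a smaller but real gap at the $\mathcal{U}_{[k]}$ slot. Your claim that $\bm{v}$ can be recovered componentwise from $\bm{u}=\ddev\ggrad\bm{v}$ ``exactly as $q$ was recovered from $\nabla q$ in Lemma \ref{complex:de}'' fails verbatim: only the off-diagonal entries satisfy $\partial_j v_i=u_{ij}$, while the diagonal gives $\partial_i v_i=u_{ii}+\tfrac13\rd\bm{v}$, and summing yields no control on $\rd\bm{v}$ because $\bm{u}$ is trace-free. The paper inserts the intermediate identity $3\bc\,\bm{u}=-\bc\,((\rd\bm{v})\bm{I})=\operatorname{mspn}\nabla(\rd\bm{v})$, which bounds the degrees of $\nabla(\rd\bm{v})$ by those of $\bc\,\bm{u}$ and forces $\rd\bm{v}\in Q_{k-1}(D;\mathbb{R})$; only then does degree tracking (via Lemma \ref{complex:de}) give $\bm{v}\in V_{[k]}(D;\mb^{3})$. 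Your sketch needs this step or an equivalent one before the componentwise integration can start.
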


\begin{proof}
	Apply the exactness of \eqref{general:derham} recursively and notice that ${\rd}{\bd}\bta=0$ for any $\bta\in C^{2}(D;\mathbb{K})$. It is straightforward to see that  $${\rd}{\bd}\Sigma_{[k]}(D;\ms)={\rd}{\bd}\Sigma_{[k]}(D;\mathbb{M})=Q_{k-2}(D;\mathbb{R}).$$
	Besides, $$RT\subseteq V_{[k]}(D;\mb^{3})\cap {\rk}({\rm{dev}}\ggrad)\subseteq H^{1}(D;\mb^{3})\cap {\rk}({\rm{dev}}\ggrad)=RT.$$
	
	For any $\bm{u}\in \mathcal{U}_{[k]}(D;\mathbb{T})$ with ${\rm{sym}}\bc \bm{u}=0$, \eqref{divdivComp} shows that there exists $\bm{v}\in H^{1}(D;\mb^{3})$, such that $$\bm{u}={\rm{dev}\, \ggrad}\, \bm{v}=\nabla\bm{v}-\frac{1}{3}({\rd}\bm{v})I.$$ Simple calculations lead to 
	\ben
	3\bc \bm{u}=-\bc ({\rd} \bm{v}I)=\operatorname{mspn}\, \nabla(\ddiv \bm{v}).
	\een
	%\ben
	%3\bc U=-\bc ({\rd} \bm{v}I)=\begin{pmatrix}
	%	0 &-\partial_{z}({\rd}\bm{v})  &~\partial_{y}({\rd}\bm{v}) \\
	%	~\partial_{z}({\rd}\bm{v}) &0  &-\partial_{x}({\rd}\bm{v})\\
	%	-\partial_{y}({\rd}\bm{v}) &~\partial_{x}({\rd}\bm{v}) &0\\
	%\end{pmatrix}.
	%\een
	This implies ${\rd}\bm{v}\in Q_{k-1}(D;\mathbb{R})$. This and Lemma \ref{complex:de} lead to $\bm{v}\in V_{[k]}(D;\mb^{3})$. Hence $${\rk}({\rm{sym}}\bc)\cap \mathcal{U}_{[k]}(D;\mathbb{T})={\rm{dev}\, \ggrad}\,  V_{[k]}(D;\mb^{3}).$$
	
	In addition, a direct calculation leads to 
	\ben
	\begin{split}
		&{\rm{dim}}\, ({{\rm{sym}}\, {\bc}}\, {\mathcal{U}}_{[k]}(D;\mathbb{T}))={\rm{dim}}\, {\mathcal{U}}_{[k]}(D;\mathbb{T})-{\rm{dim}}\, ({\rm{dev}\, {\ggrad}}{V}_{[k]}(D;\mb^{3}))\\
		&={\rm{dim}}\, {\mathcal{U}}_{[k]}(D;\mathbb{T})-{\rm{dim}}\, {V}_{[k]}(D;\mb^{3})+4=5k^3 - 3k^2 - 6k + 4,
	\end{split}
	\een
	and
	\ben
	\begin{split}
		&{\rm{dim}}\, (\rk({\rd}\, {\bd})\cap \Sigma_{[k]}(D;\ms))={\rm{dim}}\, {\Sigma}_{[k]}(D;\ms)-{\rm{dim}}({\rd}\, {\bd}{\Sigma}_{[k]}(D;\ms))\\
		&={\rm{dim}}\, {\Sigma}_{[k]}(D;\ms)-{\rm{dim}}\, {Q}_{k-2}(D;\mathbb{R})=5k^3 - 3k^2 - 6k + 4.
	\end{split}
	\een
	This shows $${{\rm{sym}}{\bc}}\, {\mathcal{U}}_{[k]}(D;\mathbb{T})=\rk({\rd}\, {\bd})\cap{\Sigma}_{[k]}(D;\ms).$$
	
	Thus the exactness of the complex follows.
\end{proof}

Given $K\in \mathcal{T}_{\Box}$, the vectorial polynomial space $V_{[k]}(K;\mb^{3})$, the traceless matrix valued polynomial space $\mathcal{U}_{[k]}(K;\mathbb{T})$, and the symmetric matrix valued polynomial space $\Sigma_{[k]}(K;\ms)$ are determined as the shape function spaces of the $H^1(\Omega;\mathbb{R}^3)$ conforming finite element space $V_{k,\Box}$, the $H(\ssym\ccurl,\Omega;\mathbb{T})$ conforming finite element space $\mathcal{U}_{k,\Box}$, and the $H(\ddiv\ddiv,\Omega;\mathbb{S})$ conforming finite element space $\Sigma_{k,\Box}$, respectively. In light of the previous discussion, it suffices to construct the DOFs by imposing the continuity requirements appropriately for each finite element space appearing in the discrete divdiv complex of \eqref{eq:DisComp}.

\subsection{$H(\ddiv \ddiv)$-conforming finite element space}
The $H(\ddiv \ddiv,\Omega;\ms)$ conforming finite element space $\Sigma_{k,\Box}$ is constructed in this subsection. 
The shape function space is 
\ben
	\begin{split}
		\Sigma_{[k]}(K;\ms):=\{&\bsi\in H^{1}(K;\ms):  \\
		&\mathscr{P}_n\bsi\in P_{k, k-2, k-2}(K)\times P_{k-2, k, k-2}(K)\times P_{k-2, k-2, k}(K),\\
		& \mathscr{P}_t\bsi \in  P_{k-1, k-1, k-2}(K)\times P_{k-1, k-2, k-1}(K)\times P_{k-2, k-1, k-1}(K)\}
	\end{split}
\een
	defined in \eqref{sigma:poly} with $k\geq 3$ and $K\in \mathcal{T}_{\Box}$. The global regularity $$\Sigma_{k,\Box}\subseteq H(\ddiv \ddiv,\Omega;\ms)\cap H(\ddiv,\Omega;\ms)$$ stemming from  \cite{2020arXiv201002638H} is imposed, which can also be noted as $$\Sigma_{k,\Box}\subseteq\{\bsi\in H(\ddiv,\Omega;\mathbb{S}):\ddiv\bsi\in H(\ddiv,\Omega;\mathbb{R}^3)\}.$$
	Actually, this is a sufficient but not necessary continuity condition for a piecewise smooth function to be in the Sobolev space $H(\ddiv\ddiv,\Omega;\ms)$.
	Sufficient and necessary conditions are presented in \cite[Proposition 3.6]{MR3925478}.

To construct the DOFs of $\Sigma_{k,\Box}\subseteq H(\ddiv \ddiv,\Omega;\ms)\cap H(\ddiv,\Omega;\ms) $, a polynomial bubble function space is defined by 
\ben
\begin{split}
	\mathring{\Sigma}_{[k]}(K;\ms):=\{\bsi \in \Sigma_{[k]}(K;\ms): &\sigma_{ii}|_{f}=0, \partial_i\sigma_{ii}|_f=0~ \text{for all}~f\in F_i(K),\\
	&\sigma_{ij}|_f=0~\text{for all}~f\in F_i(K)\cup F_j(K) \}.
\end{split}
\een

\begin{remark}
Recall $b_{K,i}$ from \eqref{def:bKi}. Any matrix valued function $\bsi=(\sigma_{ij})_{3\times 3}\in \mathring{\Sigma}_{[k]}(K;\ms)$ implies $\sigma_{ij}=\sigma_{ji}$, and for $k\geq 4$, it holds 
	\ben
	\begin{array}{cc}
	\sigma_{ii}=b_{K,i}^2q, ~~\text{for some}~~q\in P_{k-2}(x_{\overline{i-1}})\cdot P_{k-4}(x_i)\cdot P_{k-2}(x_{\overline{i+1}}),\\
	\sigma_{ij}=b_{K,i}b_{K,j}q, ~~\text{for some}~~q\in P_{k-3}(x_{i})\cdot P_{k-3}(x_j)\cdot P_{k-2}(x_{l}),
	\end{array}
	\een 
	when $k=3$, $\sigma_{ii}=0$, and $\sigma_{ij}=b_{K,i}b_{K,j}q$, for some $q\in P_1(x_l)$. 
	\end{remark}
\begin{The}
	\label{unisolvence}
	Given cube $K\in \mathcal{T}_{\Box}$, the symmetric matrix valued polynomial ${\bsi}\in \Sigma_{[k]}( {K};\ms)$ with $k\geq 3$ can be uniquely determined by the following conditions:
	\begin{subequations}\label{divdivDof3}
	\begin{align}
		( {\sigma}_{i\, \overline{i-1}}, {q})_{ {e}} \quad& ~~   \text{for all}~~  {q}\in P_{k-2}( {e};\mathbb{R}), e\in E_{\overline{i+1}}(K), \label{divdivDof3:1}\\
		(\sigma_{i\, \overline{i-1}},\partial_{\overline{i-1}}q)_{f},(\sigma_{i\, \overline{i+1}},\partial_{\overline{i+1}}q)_{f} \quad&~~\text{for all}~~ q\in Q_{k-2}(f;\mathbb{R}),f\in F_{i}(K), \label{divdivDof3:2}\\
		( {\sigma}_{ii},  {q})_{ {f}}, ( {\partial}_{i} {\sigma}_{ii},  {q})_{ {f}} \quad&~~  \text{for all}~~  {q}\in Q_{k-2}( {f};\mathbb{R}),f\in F_{i}(K),\label{divdivDof3:3}\\
		( {\bsi},   {\bta})_{ {K}} \quad&~~\text{for all}~~ {\bta}\in  \mathring{\Sigma}_{[k]}(K;\ms).\label{divdivDof3:6}
	\end{align}
\end{subequations}
	Here $i=1, 2, 3$ and the definition of $\overline{n}$ is from \eqref{overline}.
\end{The}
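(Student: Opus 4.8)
The plan is to prove unisolvence in the usual two steps: first check that the number of functionals in \eqref{divdivDof3} equals $\dim\Sigma_{[k]}(K;\ms)$, and then show that any $\bsi\in\Sigma_{[k]}(K;\ms)$ annihilated by all of them is zero. For the count I would use $\dim\Sigma_{[k]}(K;\ms)=3(k-1)(2k^2-1)$, read off from the proof of Lemma \ref{complex:poly3d:3}, and tally the functionals: \eqref{divdivDof3:1} gives $12(k-1)$ (three families of four parallel edges, with $k-1$ moments apiece); \eqref{divdivDof3:3} gives $12(k-1)^2$ (two faces in each $F_i(K)$, two functionals per face, each tested on $Q_{k-2}(f)$); and \eqref{divdivDof3:2} gives $12(k-1)(k-2)$, the key point being that $\partial_{\overline{i\pm1}}$ annihilates the constants in the differentiated variable, so its image in $Q_{k-2}(f)$ has dimension $(k-1)(k-2)$ rather than $(k-1)^2$. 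Finally \eqref{divdivDof3:6} contributes $\dim\mathring{\Sigma}_{[k]}(K;\ms)=3(k-1)(2k^2-8k+7)$, as recorded in the preceding Remark. A direct check shows these four numbers sum to $3(k-1)(2k^2-1)$.

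For injectivity, suppose all functionals vanish. I would first treat the diagonal entries. For fixed $i$ and a face $f\in F_i(K)$, the degree pattern of $\mathscr{P}_n\bsi$ makes both traces $\sigma_{ii}|_f$ and $(\partial_i\sigma_{ii})|_f$ lie in $Q_{k-2}(f)$; testing \eqref{divdivDof3:3} against $q=\sigma_{ii}|_f$ and $q=(\partial_i\sigma_{ii})|_f$ then forces $\sigma_{ii}|_f=0$ and $(\partial_i\sigma_{ii})|_f=0$ on every $f\in F_i(K)$.

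The hard part will be the off-diagonal entries, which is the only place where the edge functionals \eqref{divdivDof3:1} and the derivative-paired face functionals \eqref{divdivDof3:2} must be combined. Fixing $\sigma_{ij}$ with third index $l$ and a face $f\in F_i(K)$, the two edges of $f$ parallel to $x_l$ lie in $E_l(K)$; since $\sigma_{ij}$ restricts to an element of $P_{k-2}$ on such an edge, \eqref{divdivDof3:1} forces it to vanish there. As $\sigma_{ij}|_f$ has degree $k-1$ in $x_j$, vanishing at the two endpoints of the $x_j$-interval yields the factorization $\sigma_{ij}|_f=b_{K,j}\,r$ with $r\in\partial_j(Q_{k-2}(f))$. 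Then \eqref{divdivDof3:2} lets me pick $q\in Q_{k-2}(f)$ with $\partial_j q=r$, giving $\int_f b_{K,j}\,r^2=0$; since $b_{K,j}<0$ on the interior of $f$, this weighted $L^2$ identity forces $r=0$, hence $\sigma_{ij}|_f=0$. Interchanging $i$ and $j$ handles the faces $f\in F_j(K)$, and cycling through the permutations $\{i,j,l\}$ of $\{1,2,3\}$ covers all three off-diagonal entries on all relevant faces.

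Combining these steps, a $\bsi$ killed by \eqref{divdivDof3:1}--\eqref{divdivDof3:3} satisfies exactly the defining conditions of $\mathring{\Sigma}_{[k]}(K;\ms)$, so it is a bubble; the choice $\bta=\bsi$ in \eqref{divdivDof3:6} then gives $\|\bsi\|_{0,K}^2=0$ and hence $\bsi=0$. Together with the matching dimension count, this establishes unisolvence. I expect the off-diagonal step to be the genuine obstacle: it relies on using the edge and face functionals in tandem, on the sign-definiteness of the face bubble $b_{K,j}$, and on the exact identification of $\partial_j(Q_{k-2}(f))$ as the space containing the reduced factor $r$.
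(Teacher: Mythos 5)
Your proposal is correct and follows essentially the same route as the paper: match the count of functionals against $\dim\Sigma_{[k]}(K;\ms)=6k^3-6k^2-3k+3$ (your factored form $3(k-1)(2k^2-1)$ agrees), show the vanishing DOFs force the boundary traces to vanish so that $\bsi\in\mathring{\Sigma}_{[k]}(K;\ms)$, and conclude with \eqref{divdivDof3:6}. Your off-diagonal step---edge vanishing from \eqref{divdivDof3:1}, factorization $\sigma_{ij}|_f=b_{K,j}r$ with $r\in\partial_j\bigl(Q_{k-2}(f)\bigr)$, and the sign-definiteness of $b_{K,j}$---is exactly the detail the paper leaves implicit when it attributes $\sigma_{ij}|_f=0$ to \eqref{divdivDof3:2} alone (the edge DOFs are indeed needed, consistent with Remark \ref{re:sigma:1}), so your write-up is a faithful, more complete version of the paper's proof.
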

\begin{remark}
\label{re:sigma:1}
The continuity of $\Sigma_{k,\Box}$ is characterized by the continuity of $\bsi\mn_{f}$ and $\mn^{\intercal}_{f}\ddiv\bsi$ across each interface $f$ as in \cite{2020arXiv201002638H}. The continuity of $\bsi\mn_{f}$ is imposed by the DOFs \eqref{divdivDof3:2} and the first part of \eqref{divdivDof3:3}. The DOFs \eqref{divdivDof3:1}--\eqref{divdivDof3:2} and the second part of \eqref{divdivDof3:3} ensure the continuity of $\mn^{\intercal}_{f}\ddiv\bsi$ across each interface $f$. 
The interior moments of \eqref{divdivDof3:6} are derived by the vanishing traces and the symmetry of the tensor. 
%This sufficient continuity condition is imposed sufficiently in DOFs \eqref{divdivDof3:1}--\eqref{divdivDof3:3}. 
\end{remark}
\begin{proof}
	The number of conditions in \eqref{divdivDof3} is 
	\ben 
	\begin{split}
		&12(k-1)+12(k-2)(k-1)+12(k-1)^{2}+3(k-1)^2(k-3)\\
		&+3(k-2)^2(k-1)=6k^{3}-6k^{2}-3k+3\\
		&=3(k+1)(k-1)^{2}+3k^{2}(k-1)={\rm{dim}}\Sigma_{[k]}( {K};\ms).\\
	\end{split}
	\een
	It suffices to prove $ {\bsi}=0$ provided that \eqref{divdivDof3} vanish. 
	The vanishing of \eqref{divdivDof3:3} leads to $\sigma_{ii}|_f=0$ and $\partial_i\sigma_{ii}|_{f}=0$ for all $f\in F_i(K)$. Besides, according to \eqref{divdivDof3:2}, $\sigma_{ij}|_{f}=0$ holds for all $f\in F_i(K)\cup F_j(K)$. 
	 Thus \eqref{divdivDof3:6} completes the proof.
\end{proof}
In three dimensions, the corresponding global space for the $H(\ddiv\ddiv,\Omega;\ms)$ conforming element $\Sigma_{k, \Box}$ is defined by
\ben
\begin{split}
	\Sigma_{k, \Box}:=&\{\bsi\in H({\rd}\, {\bd},\Omega;\ms): \bsi|_{K}\in \Sigma_{[k]}(K;\ms)~\text{for all}~~K\in\mathcal{T}_{\Box},\\
	&\text{all of the degrees of freedom \eqref{divdivDof3} are single-valued}\}.
\end{split}
\een
%\textcolor{blue}{As it has been stated in Remark \ref{re:sigma:1}, DOFs \eqref{divdivDof3:1}--\eqref{divdivDof3:3} ensure the continuity of $\bsi\mn_{f}$ and $\mn_{f}^{\intercal}\ddiv \bsi$ across each interface $f\in \mathscr{F}$. Therefore, $\Sigma_{k,\Box}$ is a subspace of the space $\{\bsi\in H(\ddiv,\Omega;\mathbb{S}):\ddiv\bsi\in H(\ddiv,\Omega;\mathbb{R}^3)\}$.}
%%%%%%%%%%%%%%%%%%%%%%%%%%%%%%%%%%%%%%%%%%%%%%%%%%%%%%%%%%%%%%%%%%%%
\iffalse
\subsection{$L^2$-conforming finite element space}
Set $Q_{k-2,\Box}$ to be the piecewise $Q_{k-2}(K)$ with $k\geq 3$. It is easy to see that such functions can be uniquely determined by the following values:
\begin{equation*}
	(q, u)_{K} \quad \text{for all}~K\in\mathcal{T}_{h}~\text{and}~u\in Q_{k-2}(K;\mathbb{R}).
\end{equation*} 
\fi

%%%%%%%%%%%%%%%%%%%%%%%%%%%%%%%%%%%%%%%%%%%%%%%%%%%%%%%%%%%%%%%%%%%%%

\subsection{$H(\operatorname{sym}\operatorname{curl})$-conforming finite element space} This subsection constructs the $H(\operatorname{sym} \bc,\Omega;\mathbb{T})$ conforming finite element space $\mathcal{U}_{k,\Box}$ which satisfies the inclusion $\ssym\ccurl \mathcal{U}_{k,\Box}\subseteq \Sigma_{k,\Box}$.
The shape function space is 
\ben
\begin{split}
	\mathcal{U}_{[k]}(K;\mathbb{T}):=\{&\bm{u}\in H^{1}(K;\mathbb{T}):  \mathscr{P}_n\bm{u}\in Q_{k-1}(K;\mb^{3}), \\
	&\mathscr{P}_t\bm{u} \in P_{k, k-2, k-1}(K)\times P_{k, k-1, k-2}(K)\times P_{k-1, k, k-2}(K),\\
	& \mathscr{P}_t(\bm{u}^{\intercal})\in  P_{k-2, k, k-1}(K)\times P_{k-2, k-1, k}(K)\times P_{k-1, k-2, k}(K)\}
\end{split}
\een
defined in \eqref{U:3d:def} with $k\geq 3$ and $K\in \mathcal{T}_{\Box}$.
To construct the DOFs, the main difficulty arises in the characterization of the continuity of functions in $\mathcal{U}_{k,\Box}$.  
Based on the sufficient continuity condition deduced below, a subspace of $H(\operatorname{sym} \bc,\Omega;\mathbb{T})$ with enhanced regularity will be introduced to attack the difficulty. 
 
Firstly, a conclusion that draws heavily on the traceless property of matrix valued functions is presented in the following lemma.
\begin{lemma}
\label{tracelessproperty}
	Given a traceless matrix valued function $\bm{u}\in H(\ccurl,\Omega;\mathbb{T})$, its column vector $\bm{u}^{\intercal}\in H(\ddiv,\Omega;\mathbb{T})$.
\end{lemma}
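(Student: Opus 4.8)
The plan is to reduce the claim to a single pointwise (distributional) algebraic identity that expresses the column-wise divergence of $\bm{u}$ through the entries of its row-wise curl, with the trace of $\bm{u}$ appearing as the only obstruction; tracelessness then removes that obstruction. First I would write $\bm{u}=(u_{ij})_{3\times 3}$ and spell out both operators componentwise: since $\bd$ acts row by row, the $i$-th component of $\bd\bm{u}^{\intercal}$ is $\sum_{j}\partial_{j}u_{ji}$ (the column-wise divergence of $\bm{u}$), while $(\bc\bm{u})_{kl}$ is the $l$-th component of the curl of the $k$-th row of $\bm{u}$.

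The key step is to verify, purely by rearranging distributional derivatives, the identity
\begin{equation*}
(\bd\bm{u}^{\intercal})_{i}=(\bc\bm{u})_{\overline{i-1},\,\overline{i+1}}-(\bc\bm{u})_{\overline{i+1},\,\overline{i-1}}+\partial_{i}(\operatorname{tr}\bm{u}),\qquad i=1,2,3,
\end{equation*}
which I expect to hold in $\mathcal{D}'(\Omega)$ for \emph{every} $\bm{u}\in L^{2}(\Omega;\mathbb{M})$, because each side is merely a fixed linear combination of the first-order distributional derivatives $\partial_{j}u_{kl}$, so no smoothness is needed. For instance, for $i=1$ one has $(\bc\bm{u})_{32}-(\bc\bm{u})_{23}=\partial_{2}u_{21}+\partial_{3}u_{31}-\partial_{1}(u_{22}+u_{33})$, and adding $\partial_{1}(\operatorname{tr}\bm{u})$ telescopes to $\partial_{1}u_{11}+\partial_{2}u_{21}+\partial_{3}u_{31}=(\bd\bm{u}^{\intercal})_{1}$; the cases $i=2,3$ follow by the same cyclic computation.

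With the identity in hand the conclusion is immediate. Since $\bm{u}\in H(\bc,\Omega;\mathbb{T})$ is traceless, $\partial_{i}(\operatorname{tr}\bm{u})=0$, and since $\bc\bm{u}\in L^{2}(\Omega;\mathbb{M})$ every entry on the right-hand side lies in $L^{2}(\Omega;\mathbb{R})$. Hence each $(\bd\bm{u}^{\intercal})_{i}\in L^{2}(\Omega;\mathbb{R})$, so $\bd\bm{u}^{\intercal}\in L^{2}(\Omega;\mathbb{R}^{3})$; as the transpose preserves the trace, $\bm{u}^{\intercal}$ is again traceless, giving $\bm{u}^{\intercal}\in H(\bd,\Omega;\mathbb{T})$.

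The only real obstacle is discovering the correct identity — in particular pinning down the index pattern $\overline{i\pm 1}$ and the sign of each curl entry — and recognizing that it is a genuine distributional identity valid for all $L^{2}$ matrix fields, so that no extra density or mollification argument is required. Everything else is bookkeeping. A useful sanity check before committing to signs is that the right-hand vector (ignoring the vanishing trace term) is, up to a factor, the axial vector of the skew-symmetric part of $\bc\bm{u}$; that is, it equals $\operatorname{mspn}^{-1}$ applied to $\bc\bm{u}-(\bc\bm{u})^{\intercal}$, a geometric reading that fixes the permutation and the signs unambiguously.
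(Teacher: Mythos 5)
Your proof is correct and is essentially the paper's own argument: the paper performs exactly your $i=1$ computation, writing $\partial_1 u_{11}+\partial_2 u_{21}+\partial_3 u_{31}=(\partial_2 u_{21}-\partial_1 u_{22})+(\partial_3 u_{31}-\partial_1 u_{33})$ via $u_{11}=-(u_{22}+u_{33})$, recognizes the two brackets as entries of $\ccurl\bm{u}$, and handles $i=2,3$ by ``similar arguments.'' Your packaging of this as a single distributional identity valid for all $L^{2}$ matrix fields, with $\partial_i(\operatorname{tr}\bm{u})$ as the explicit obstruction, is a slightly more systematic presentation of the same idea and needs no density argument, just as you say.
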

\begin{proof}
	Let $\bm{u} = (u_{ij})_{3\times 3}\in H(\ccurl,\Omega;\mathbb{T})$. Note that 
	\begin{equation*}
		\begin{split}
			{\partial_{1} u_{11}}&+{\partial_{2} u_{21}}+{\partial_{3} u_{31}} = {-\partial_{1}(u_{22}+u_{33})}+{\partial_{ 2}u_{21}}+{\partial_{3} u_{31}} \\&= ({\partial_{2} u_{21}} -{\partial_{1} u_{22}}) +({\partial_{3} u_{31}}-{\partial_{1} u_{33}}) \in L^2(\Omega;\mathbb{R}).
		\end{split}
	\end{equation*}
	\textcolor{black}{Similar arguments} imply $\ddiv^{*}\bm{u}\in L^2(\Omega;\mathbb{R}^3)$. This completes the proof.
\end{proof}

For any traceless matrix valued function $\bm{u}\in \mathcal{U}_{k,\Box}$, let $\bsi=\ssym \ccurl \bm{u}$. It follows from the inclusion condition $\ssym \ccurl\, \mathcal{U}_{k,\Box}\subseteq \Sigma_{k,\Box}$ that the symmetric matrix valued function $\bsi$ ought to be in the $H(\ddiv\ddiv,\Omega;\ms)$ conforming finite element space $ \Sigma_{k,\Box}$ constructed in Section 3.2. This implies the continuity requirements for $\bsi\mn_{f}$ and $\ddiv\bsi\cdot\mn_{f}$ across each interface $f$. The following lemma presents two identities. They motivate the sufficient conditions of enhanced smoothness for $\bm{u}$.
\begin{lemma}
	\label{sigma-u}
	Given a matrix valued function $\bm{u}\in C^{2}(\Omega;\mathbb{M})$, let $\bm{\sigma} = \ssym\ccurl \bm{u}$. It holds
	\begin{subequations}\label{remark:sigma}
	\begin{align}
		&\bm{\sigma}\mn_{f}  = (\ccurl\bm{u})\mn_f-\frac{1}{2}(\ddiv^{*}\bm{u})\times\mn_{f},\label{re:1}\\	
		&(\ddiv\bm{\sigma})\cdot\mn_{f} = \frac{1}{2}\ccurl(\ddiv^{*}\bm{u})\cdot\mn_{f}.\label{re:2}
	\end{align}
       \end{subequations}
	Here $f\in \mathscr{F}$ is a facet with the unit normal vector $\mn_f$. 
\end{lemma}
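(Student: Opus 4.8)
The plan is to reduce both identities to the splitting
$\bm{\sigma}=\ssym\ccurl\bm{u}=\tfrac12\big(\ccurl\bm{u}+(\ccurl\bm{u})^{\intercal}\big)$,
i.e. $\bm{\sigma}=\ccurl\bm{u}-\operatorname{skw}(\ccurl\bm{u})$ with $\operatorname{skw}(A)=\tfrac12(A-A^{\intercal})$, and then to treat the symmetric and skew-symmetric parts of $\ccurl\bm{u}$ separately, using only the classical first-order identities ($\ddiv\,\ccurl=0$ applied row-wise and the commutativity of second derivatives) together with the algebraic relation $(\operatorname{mspn}\bm{w})\mn_{f}=\bm{w}\times\mn_{f}$, which is immediate from the definition of $\operatorname{mspn}$. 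Throughout I write $\ccurl,\ddiv$ for the row-wise operators and $\ddiv^{*}$ for the column-wise divergence, so that with summation over repeated indices $(\ddiv^{*}\bm{u})_{j}=\partial_{i}u_{ij}$ and $(\ccurl\bm{u})_{ij}=\epsilon_{jpq}\partial_{p}u_{iq}$, where $\epsilon$ is the permutation symbol.

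For \eqref{re:1} I would first write
\[
\bm{\sigma}\mn_{f}=(\ccurl\bm{u})\mn_{f}-\tfrac12\big(\ccurl\bm{u}-(\ccurl\bm{u})^{\intercal}\big)\mn_{f},
\]
so it suffices to identify the skew-symmetric matrix $W:=\ccurl\bm{u}-(\ccurl\bm{u})^{\intercal}$ as $W=\operatorname{mspn}(\ddiv^{*}\bm{u})$; then $(\operatorname{mspn}\bm{w})\mn_{f}=\bm{w}\times\mn_{f}$ converts the second term into $-\tfrac12(\ddiv^{*}\bm{u})\times\mn_{f}$, which is exactly \eqref{re:1}. This identification is the crux: contracting the axial vector against $\epsilon$ and using $\epsilon_{kij}\epsilon_{jpq}=\delta_{kp}\delta_{iq}-\delta_{kq}\delta_{ip}$ gives
\[
\operatorname{axl}(W)_{k}=\partial_{i}u_{ik}-\partial_{k}u_{ii}=(\ddiv^{*}\bm{u})_{k}-\partial_{k}(\operatorname{tr}\bm{u}).
\]
Since the lemma is applied to $\bm{u}\in\mathcal{U}_{k,\Box}$, i.e. to $\mathbb{T}$-valued functions, the traceless structure kills $\partial_{k}(\operatorname{tr}\bm{u})$ and yields $\operatorname{axl}(W)=\ddiv^{*}\bm{u}$, hence $W=\operatorname{mspn}(\ddiv^{*}\bm{u})$.

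For \eqref{re:2} I would use that the row-wise divergence of a row-wise curl vanishes, $\ddiv(\ccurl\bm{u})=0$, so only the transposed part survives, $\ddiv\bm{\sigma}=\tfrac12\,\ddiv\big((\ccurl\bm{u})^{\intercal}\big)$. A one-line index computation then gives, for each $i$,
\[
\big(\ddiv((\ccurl\bm{u})^{\intercal})\big)_{i}=\partial_{j}(\ccurl\bm{u})_{ji}=\epsilon_{ipq}\,\partial_{p}\partial_{j}u_{jq}=\epsilon_{ipq}\,\partial_{p}(\ddiv^{*}\bm{u})_{q}=\big(\ccurl(\ddiv^{*}\bm{u})\big)_{i},
\]
the middle step using only $\partial_{j}\partial_{p}=\partial_{p}\partial_{j}$. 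Thus $\ddiv\bm{\sigma}=\tfrac12\,\ccurl(\ddiv^{*}\bm{u})$, and dotting with $\mn_{f}$ gives \eqref{re:2}; note that no trace hypothesis is needed here.

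The main obstacle I anticipate is the bookkeeping in \eqref{re:1}: correctly extracting the axial vector of the skew part of $\ccurl\bm{u}$, pinning down all signs against the specific convention for $\operatorname{mspn}$ used in the paper, and making transparent that it is precisely the vanishing of $\operatorname{tr}\bm{u}$ that removes the spurious $\partial_{k}(\operatorname{tr}\bm{u})$ term so that the clean identity \eqref{re:1} holds. Identity \eqref{re:2}, by contrast, is a routine consequence of $\ddiv\,\ccurl=0$ together with commuting derivatives and should present no real difficulty.
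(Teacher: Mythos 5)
Your proof is correct and is essentially the paper's own argument: the paper likewise proves \eqref{re:1} by writing $\bm{\sigma}\mn_{f}=(\ccurl\bm{u}-\tfrac{1}{2}\operatorname{mspn}(\ddiv^{*}\bm{u}))\mn_{f}$ and then using $(\operatorname{mspn}\bm{w})\mn_{f}=\bm{w}\times\mn_{f}$, and proves \eqref{re:2} via $\ddiv\bm{\sigma}=\tfrac{1}{2}\ddiv(\ccurl\bm{u})^{\intercal}=\tfrac{1}{2}\ccurl(\ddiv^{*}\bm{u})$, exactly your two steps. One point in your favor: your axial-vector computation, giving the skew part of $\ccurl\bm{u}$ as $\tfrac{1}{2}\operatorname{mspn}(\ddiv^{*}\bm{u}-\nabla\operatorname{tr}\bm{u})$, is sharper than the paper's unstated ``elementary calculations,'' since \eqref{re:1} as literally stated for $\bm{u}\in C^{2}(\Omega;\mathbb{M})$ picks up the extra term $\tfrac{1}{2}(\nabla\operatorname{tr}\bm{u})\times\mn_{f}$ (e.g.\ $\bm{u}=t\bm{I}$ gives $\bm{\sigma}=0$ but right-hand side $-\tfrac{3}{2}\nabla t\times\mn_{f}$), so the identity genuinely requires the tracelessness under which the lemma is applied throughout the paper, whereas \eqref{re:2}, as you observe, holds for arbitrary $\mathbb{M}$-valued fields.
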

\begin{proof}
Elementary calculations lead to 
\ben
\begin{split}
&\bm{\sigma}\mn_{f} = (\ccurl\bm{u} -\frac{1}{2}\operatorname{mspn}(\ddiv^{*}\bm{u}))\mn_{f} = (\ccurl\bm{u})\mn_f-\frac{1}{2}(\ddiv^{*}\bm{u})\times\mn_{f},\\
&(\ddiv\bm{\sigma})\cdot\mn_{f} = \frac{1}{2}\ddiv(\ccurl\bm{u})^{\intercal}\cdot\mn_{f}=\frac{1}{2}\ccurl(\ddiv^{*}\bm{u})\cdot\mn_{f}.
\end{split}
\een
\end{proof}
\begin{remark}
\label{u:continuity:condition}
From Lemma \ref{sigma-u}, given a traceless matrix valued function $\bm{u}\in \mathcal{U}_{k,\Box}$, it follows from  $\operatorname{sym}\bc \bm{u}\in \Sigma_{k,\Box}$ that $\bm{u}\times \mn_{f}$ and $(\ddiv^{\ast}\bm{u})\times \mn_{f}$ have to be continuous across each interface $f$. This implies that $\bm{u}\in H(\operatorname{curl},\Omega;\mathbb{T})$, and then Lemma \ref{tracelessproperty} leads to $\bm{u}^{\intercal}\in H(\operatorname{div},\Omega;\mathbb{T})$, which means $\ddiv^{\ast}\bm{u}\in L^2(\Omega;\mathbb{R}^3)$.
\end{remark}

Define
\begin{equation*}
 H^{*}(\ssym\ccurl,\Omega;\mathbb{T}):=\{\bm{u}\in H(\ccurl,\Omega;\mathbb{T}): \bm{u}^{\intercal}\in H(\ddiv,\Omega;\mathbb{T}), \ddiv^{*}\bm{u}\in H(\ccurl,\Omega;\mathbb{R}^3)\},
\end{equation*} 
which is a subspace of $H(\ssym\ccurl,\Omega;\mathbb{T})$. Lemma \ref{tracelessproperty} leads to the simplification  
\begin{equation}
\label{def:symcurlast}
	H^{*}(\ssym\ccurl,\Omega;\mathbb{T}):= \{\bm{u}\in H(\ccurl,\Omega;\mathbb{T}): \ddiv^{*}\bm{u}\in H(\ccurl,\Omega;\mathbb{R}^3)\}.
\end{equation}

Motivated by Lemma \ref{sigma-u}, the global regularity $\mathcal{U}_{k, \Box}\subseteq H^{*}(\ssym\ccurl,\Omega;\mathbb{T})$ is imposed. 
To construct the DOFs of  $ \mathcal{U}_{k, \Box}\subseteq H^{*}(\ssym\ccurl,\Omega;\mathbb{T})$, a polynomial bubble function space is defined by 
\be
\label{U:3d:def:b}
\begin{split}
	\mathring{\mathcal{U}}_{[k]}( {K};\mathbb{T}):=\{\bm{u}\in \mathcal{U}_{[k]}( {K};\mathbb{T}):  &{u}_{ii}|_{ {f}}=0~~\text{for all}~~ {f}\in \mathscr{F}( {K}), \\
	& {u}_{ij}|_{f}=0 ~~\text{for all}~~ {f}\in F_i({K})\cup F_l(K),\\
	&\partial_{i} {u}_{ij}|_{ {f}}=0 ~~\text{for all}~~ {f}\in F_i({K})\}. \\
\end{split}
\ee
\begin{remark}
	\label{re:symcurl}
	Recall $b_{K}:=b_{K,i}b_{K,j}b_{K,l}$ and $b_{K,i}$ from \eqref{def:bKi}. For a tensor function $\bm{u}=(u_{ij})_{3\times 3}\in\, \mathring{\mathcal{U}}_{[k]}( {K};\mathbb{T})$, it holds that
	$${u}_{ii}=b_K q\quad\text{for some}~~q\in  Q_{k-3}( {K};\mathbb{R}),$$
	$$u_{ij}=b_{K,i}^2 b_{K, l}\,  q\quad\text{for some}~~ q\in P_{k-4}(x_i)\cdot P_{k-2}(x_{j})\cdot P_{k-3}(x_l),$$
	in which $\{i, j, l\}$ is a permutation of $\{1, 2, 3\}$. 
\end{remark}
\begin{The}
	Given cube $ {K}\in \mathcal{T}_{\Box}$, the traceless matrix valued polynomial $ \bm{u}\in \mathcal{U}_{[k]}( {K};\mathbb{T})$ with $k\geq 3$ can be uniquely determined by the following conditions:
	\begin{subequations}\label{3dsymcurlDof}
		\begin{align}
			{u}_{ii}( {a})\quad&\text{for all}~~ {a}\in \mathscr{V}( {K}), i=1,2, \label{3dsymcurlDof:1}\\
			( {u}_{ii}, {q})_{ {e}}\quad&\text{for all}~~ {q}\in P_{k-3}( {e};\mathbb{R}),  {e}\in \mathscr{E}( {K}), i=1,2,\label{3dsymcurlDof:2}\\
			( {u}_{ji},  {q})_{ {e}}, ( {\partial}_{j} {u}_{ji}, {q})_{ {e}} \quad&\text{for all}~~ {q}\in P_{k-2}( {e};\mathbb{R}),e\in E_i(K), \label{3dsymcurlDof:3}\\
			(u_{ij}, \partial_{i}(\partial_{i} q))_{f}\quad&\text{for all}~~q\in Q_{k-2}(f;\mathbb{R}), f\in F_l(K), \label{3dsymcurlDof:4-5}\\
			(u_{ij}, \partial_{l}q)_{f}, (\partial_{i}u_{ij},\partial_l q)_{f}\quad&\text{for all}~~q\in Q_{k-2}(f;\mathbb{R}), f\in F_i(K),  \label{3dsymcurlDof:6-7}\\
			( {u}_{ii}, {q})_{ {f}}\quad&\text{for all}~~ {q}\in Q_{k-3}( {f};\mathbb{R}),  {f}\in \mathscr{F}( {K}), i=1,2,\label{3dsymcurlDof:8}\\
			( \bm{u},  \bm{w})_{ {K}}\quad&\text{for all}~~ \bm{w}\in \mathring{\mathcal{U}}_{k}( {K};\mathbb{T}).\label{3dsymcurlDof:9}
		\end{align}
	\end{subequations}
\end{The}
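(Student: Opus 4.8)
The plan is to mirror the two-step template of the proof of Theorem~\ref{unisolvence}. First I would verify that the number of functionals in \eqref{3dsymcurlDof} equals $\dim\mathcal{U}_{[k]}(K;\mathbb{T})$, and then show that a shape function annihilated by all of \eqref{3dsymcurlDof} vanishes identically. For the count I would split $\bm{u}=(u_{ij})_{3\times3}$ into its two independent diagonal entries (each in $Q_{k-1}(K;\mathbb{R})$, the third determined by tracelessness) and its six shear entries, each of dimension $k^3-k$, so that $\dim\mathcal{U}_{[k]}(K;\mathbb{T})=2k^3+6(k^3-k)=8k^3-6k$, and then tally \eqref{3dsymcurlDof:1}--\eqref{3dsymcurlDof:9}; this is routine arithmetic that I would only record at the end.

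The core is injectivity, which I would organize entry by entry, using the clean pattern that each shear entry $u_{ij}$ has degree $k$ in $x_i$, degree $k-1$ in $x_l$, and degree $k-2$ in $x_j$, where $\{i,j,l\}$ is a permutation of $\{1,2,3\}$. I would first dispatch the diagonal: on any face $f$ the trace $u_{ii}|_f\in Q_{k-1}(f;\mathbb{R})$ is a standard tensor-product Lagrange trace, pinned down by its four vertex values \eqref{3dsymcurlDof:1}, its edge moments against $P_{k-3}$ \eqref{3dsymcurlDof:2}, and its interior moments against $Q_{k-3}$ \eqref{3dsymcurlDof:8}, since $4+4(k-2)+(k-2)^2=k^2=\dim Q_{k-1}(f;\mathbb{R})$. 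Hence $u_{ii}|_f=0$ for $i=1,2$, and $u_{33}|_f=-(u_{11}+u_{22})|_f=0$ by tracelessness, which is the first family of conditions defining $\mathring{\mathcal{U}}_{[k]}(K;\mathbb{T})$ in \eqref{U:3d:def:b}.

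For the shear entries I would proceed edges first, then faces. The edge functionals \eqref{3dsymcurlDof:3} control $u_{ij}$ on edges $e$ parallel to $x_j$: there both $u_{ij}|_e$ and $\partial_i u_{ij}|_e$ are degree-$(k-2)$ polynomials in $x_j$, so their $P_{k-2}(e)$ moments force $u_{ij}|_e=\partial_i u_{ij}|_e=0$. Next, on a face $f\in F_i(K)$ (in-plane variables $x_j,x_l$), the trace $u_{ij}|_f$ already vanishes on the two edges parallel to $x_j$; integrating $(u_{ij},\partial_l q)_f$ by parts in $x_l$ annihilates the boundary terms by this edge vanishing, so \eqref{3dsymcurlDof:6-7} gives $(\partial_l u_{ij},q)_f=0$ for all $q\in Q_{k-2}(f;\mathbb{R})$, and since $\partial_l u_{ij}|_f\in Q_{k-2}(f;\mathbb{R})$ I may test against itself to obtain $\partial_l u_{ij}|_f=0$ and hence $u_{ij}|_f=0$; the identical argument on $\partial_i u_{ij}$ yields $\partial_i u_{ij}|_f=0$. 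Finally, on a face $f\in F_l(K)$ (in-plane variables $x_i,x_j$), where $u_{ij}|_f$ has degree $k$ in $x_i$, I would integrate $(u_{ij},\partial_i^2 q)_f$ by parts twice in $x_i$; both boundary terms vanish because $u_{ij}$ and $\partial_i u_{ij}$ vanish on the edges parallel to $x_j$, so \eqref{3dsymcurlDof:4-5} forces $\partial_i^2 u_{ij}|_f=0$, whence $u_{ij}|_f$ is affine in $x_i$ and, vanishing at the two extreme values of $x_i$, must be zero. These three families place $\bm{u}$ in $\mathring{\mathcal{U}}_{[k]}(K;\mathbb{T})$, and the interior moments \eqref{3dsymcurlDof:9} then close the proof.

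The step I expect to be the main obstacle is the bookkeeping of the integration-by-parts boundary terms for the shear components: one must match each face functional to the correct entry and face, and confirm that precisely the edge traces already shown to vanish (namely $u_{ij}$ and $\partial_i u_{ij}$ on edges parallel to $x_j$) are the ones appearing in the boundary of the single IBP in $x_l$ on $F_i(K)$ and the double IBP in $x_i$ on $F_l(K)$. Because the polynomial degrees are asymmetric and $u_{ij}$ is deliberately left free on $F_j(K)$, the order of the steps is essential, so I would establish all edge conclusions before touching either face family.
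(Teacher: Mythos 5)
Your proposal is correct and follows essentially the same route as the paper's own proof: a dimension count ($\dim\mathcal{U}_{[k]}(K;\mathbb{T})=2k^3+6(k^3-k)=8k^3-6k$, matching the tally of \eqref{3dsymcurlDof:1}--\eqref{3dsymcurlDof:9}) together with the reduction of a shape function annihilated by \eqref{3dsymcurlDof:1}--\eqref{3dsymcurlDof:8} to the bubble space $\mathring{\mathcal{U}}_{[k]}(K;\mathbb{T})$ (equivalently $u_{ii}=b_K q$ and $u_{ij}=b_{K,i}^2 b_{K,l}q$), after which the interior moments \eqref{3dsymcurlDof:9} close the argument. The only difference is level of detail: your edges-before-faces ordering and the explicit integration-by-parts bookkeeping (single IBP in $x_l$ on $F_i(K)$, double IBP in $x_i$ on $F_l(K)$, with boundary terms killed by the edge traces from \eqref{3dsymcurlDof:3}) spell out precisely the trace arguments that the paper asserts without proof.
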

\begin{remark}
The continuity of $\mathcal{U}_{k,\Box}$ is characterized by the continuity of $\bm{u}\times \mn_f$ and $(\ddiv^{\ast}\bm{u})\times \mn_f$ across each interface $f$. The DOFs \eqref{3dsymcurlDof:1}-\eqref{3dsymcurlDof:3} and  \eqref{3dsymcurlDof:6-7}-\eqref{3dsymcurlDof:8} ensure the continuity of $\bm{u}\times \mn_f$ across each interface $f$. The DOFs \eqref{3dsymcurlDof:3}--\eqref{3dsymcurlDof:6-7} lead to the continuity of $(\ddiv^{\ast}\bm{u})\times \mn_f$  across each interface $f$.
The interior moments of \eqref{3dsymcurlDof:9} are derived by the vanishing of \eqref{3dsymcurlDof:1}--\eqref{3dsymcurlDof:8} and the traceless property of $\bm{u}$.
%DOFs \eqref{3dsymcurlDof:1}--\eqref{3dsymcurlDof:8} are motivated by the observation that $\ssym \ccurl \bm{u} \in \Sigma_{k,\Box}$ follows by imposing the moments of \eqref{divdivDof3:1}--\eqref{divdivDof3:3} on $\ssym \ccurl \bm{u}$. The interior moments of \eqref{3dsymcurlDof:9} are derived by the vanishing of \eqref{3dsymcurlDof:1}--\eqref{3dsymcurlDof:8} and the traceless property of $\bm{u}$.
%Based on the identities \eqref{re:1}--\eqref{re:2}, DOFs \eqref{3dsymcurlDof:1}--\eqref{3dsymcurlDof:8} 
%can be deduced from \eqref{divdivDof3:1}--\eqref{divdivDof3:3} by integration by parts. 
%DOFs \eqref{3dsymcurlDof:1}--\eqref{3dsymcurlDof:2} and the second part of \eqref{3dsymcurlDof:3} therein are derived from \eqref{divdivDof3:1}. DOFs \eqref{3dsymcurlDof:2}, the first part of \eqref{3dsymcurlDof:3}, \eqref{3dsymcurlDof:4-5}, the second part of \eqref{3dsymcurlDof:6-7}, and \eqref{3dsymcurlDof:8} are from \eqref{divdivDof3:1}. DOFs \eqref{3dsymcurlDof:3} and the first part of \eqref{3dsymcurlDof:6-7} are motivated by the first part of \eqref{divdivDof3:3}, while the second part of \eqref{divdivDof3:3} leads to the second part of \eqref{3dsymcurlDof:3} as well as the second part of \eqref{3dsymcurlDof:6-7}. The interior moments of \eqref{3dsymcurlDof:9} are derived by the vanishing of \eqref{3dsymcurlDof:1}--\eqref{3dsymcurlDof:8} and the traceless property of $\bm{u}$.
\end{remark}
\begin{proof}
	First off, it is easy to compute $${\rm{dim}}\, \mathring{\mathcal{U}}_{[k]}( {K};\mathbb{T})=2(k-2)^{3}+6(k-3)(k-1)(k-2)=8k^3 - 48k^2 + 90k - 52.$$
	
	Thus the number of degrees of freedom \eqref{3dsymcurlDof} is 
	\ben
	\begin{split}
		&16+24(k-2)+48(k-1)+12(k-3)(k-1)+24(k-1)(k-2)+12(k-2)^2\\
		&+8k^3 - 48k^2 + 90k - 52=8k^3-6k=2k^3+6(k+1)(k-1)k={\operatorname{dim}}\, {\mathcal{U}}_{[k]}( {K};\mathbb{T}).\\
	\end{split}
	\een
	
	Then it remains to prove $ \bm{u}=0$ provided that \eqref{3dsymcurlDof:1}--\eqref{3dsymcurlDof:9} vanish. According to \eqref{3dsymcurlDof:1}--\eqref{3dsymcurlDof:2} and \eqref{3dsymcurlDof:8}, there exists some $q\in Q_{k-3}( {K};\mathbb{R})$ such that ${u}_{ii}=b_K q$. Besides, the DOFs \eqref{3dsymcurlDof:3}--\eqref{3dsymcurlDof:6-7} lead to  
	\ben
	u_{ij}=b_{K,i}^2 b_{K, l}\,  q, ~~\text{for some}~~ q\in P_{k-4}(x_i)\cdot P_{k-2}(x_{j})\cdot P_{k-3}(x_l).
	\een
	Hence the DOFs \eqref{3dsymcurlDof:9} complete the proof.
\end{proof}

In three dimensions, the corresponding global space for the $H({\rm{sym}\bc},\Omega;\mathbb{T})$ conforming finite element $\mathcal{U}_{k, \Box}$ is defined by
\ben
\begin{split}
	\mathcal{U}_{k, \Box}:=&\{\bm{u}\in H({\rm{sym}\bc},\Omega;\mathbb{T}):\bm{u}|_{K}\in \mathcal{U}_{[k]}(K;\mathbb{T}) ~\text{for all}~~K\in\mathcal{T}_{\Box},\\
	&\text{all of the degrees of freedom \eqref{3dsymcurlDof} are single-valued}\}.
\end{split}
\een

\begin{remark}
	\label{re:symcurl:inclu}
 On cuboid grids, for any $\bm{u}\in \mathcal{U}_{k,\Box}$, let $\bsi=\operatorname{sym}\bc \bm{u}$. Then
\ben
\begin{split}
\sigma_{ii}&=\partial_{\overline{i-1}}u_{i\, \overline{i+1}}-\partial_{\overline{i+1}}u_{i\, \overline{i-1}},\\
\sigma_{ij}&=\frac{1}{2}(\partial_{l}u_{jj}-\partial_{l}u_{ii}+\partial_{i}u_{il}-\partial_{j}u_{jl}), i<j.
\end{split} 
\een 
It can be derived from the DOFs \eqref{3dsymcurlDof:1}--\eqref{3dsymcurlDof:8} that $[u_{ii}]_f=0$ for all interfaces $f\in \mathscr{F}$, $[u_{ij}]_f=0$ and $[\partial_{i}u_{ij}]_{f}=0$ for all interfaces $f\in {F}_i\cup {F}_{l}$. For each interface $f\in {F}_i$, all of the tangential derivatives of $[u_{ij}]_f$ and $[\partial_i u_{ij}]_f$ vanish. This implies $[\sigma_{ii}]_f=0$ and $[\partial_i\sigma_{ii}]_f=0$ for all interfaces $f\in {F}_i$. Besides, a combination of the vanishing of the tangential derivatives of $[u_{ii}]_f$ on each interface $f\in {F}_i$ leads to $[\sigma_{ij}]_f=0$ for all $f\in {F}_i\cup {F}_j$. This with Lemma \ref{complex:poly3d:3} shows $\bsi\in \Sigma_{k,\Box}$.
\end{remark}

%%%
\subsection{$H^1$-conforming finite element space}This subsection constructs the $H^1(\Omega;\mathbb{R}^{3})$ conforming finite element space $V_{k,\Box}$ which satisfies the inclusion $\operatorname{dev}\, \operatorname{grad} V_{k,\Box}\subseteq \mathcal{U}_{k,\Box}$. 
The shape function is $$V_{[k]}(K;\mathbb{R}^3):= P_{k, k-1, k-1}(K)\times P_{k-1, k, k-1}(K)\times P_{k-1, k-1, k}(K)$$ defined in \eqref{spacedef:N}  with $k\geq 3$ and $K\in \mathcal{T}_{\Box}$.

\begin{remark}
\label{re:3}
It follows from the inclusion condition $\operatorname{dev}\, \operatorname{grad} V_{k,\Box}\subseteq \mathcal{U}_{k,\Box}$ that, for a vector valued function $\bm{v}\in V_{k,\Box}$, $\bm{u}=\operatorname{dev}\, \operatorname{grad}\bm{v}$ has to be in $\mathcal{U}_{k,\Box}$. This and Remark \ref{u:continuity:condition} imply
the additional requirement of the continuity of $(\ddiv^{\ast}\bm{u})\times \mn_{f}$ across all the interfaces $f$ for $\bm{u}$. Elementary calculations lead to
\ben
\ddiv^{\ast}(\operatorname{dev}\, \operatorname{grad} \bm{v})\times \mn_{f}=\ddiv^{*}(\ggrad \bm{v}-\frac{1}{3}(\ddiv\bm{v})\bm{I})\times\bm{n}_{f}= \frac{2}{3}\ggrad(\ddiv\bm{v})\times\bm{n}_{f}.
\een
Thus the continuity of the tangential derivatives of $\ddiv\bm{v}$ on each interface $f$ will be imposed on $V_{k,\Box}$.
\end{remark}
Define 
\be
\label{H1regularity}
H^1(\ddiv,\Omega;\mathbb{R}^3):=\{\bm{v}\in H^1(\Omega;\mathbb{R}^3): \ddiv \bm{v}\in H^1(\Omega;\mathbb{R})\}.
\ee
Remark \ref{re:3} motivates the global regularity $V_{k,\Box}\subseteq H^{1}(\ddiv,\Omega;\mathbb{R}^3)$. To construct the DOFs of $V_{k,\Box}\subseteq H^1(\ddiv,\Omega;\mathbb{R}^3)$ on each cube $ {K}\in \mathcal{T}_{\Box}$, define a polynomial bubble function space as follows,
\ben
\begin{split} 
\mathring{V}_{[k]}( {K};\mb^{3}):=\{ {\bm{v}}\in V_{[k]}( {K};\mb^{3}):  &{\bm{v}}|_{ {f}}=0\quad \text{for all}~{f}\in \mathscr{F}( {K})\\
&{\partial}_{i} {v}_{i}|_{ {f}}=0\quad\text{for all}~{f}\in F_i( {K})\}.
\end{split}
\een
\begin{remark}
	Recall $b_{K}:=b_{K,i}b_{K,j}b_{K,l}$ and $b_{K,i}$ from \eqref{def:bKi}. For a vector $\bm{v}=(v_1, v_2, v_3)^{\intercal}\in \mathring{V}_{[k]}( {K};\mb^{3})$, it holds that 
$${v}_{i}=b_{K}b_{K,i}\, q,~~\text{for some}~~q\in P_{k-4}( {x}_{i})\cdot P_{k-3}( {x}_{\overline{i-1}})\cdot P_{k-3}( {x}_{\overline{i+1}}),\, i=1, 2, 3.$$
\end{remark}
\begin{The}
	Given cube $ {K}\in\mathcal{T}_{\Box}$, a vectorial polynomial $ \bm{v}\in V_{[k]}( {K};\mb^{3})$ with $k\geq 3$ can be uniquely determined by the following conditions:
	\begin{subequations}\label{3dh1Dof}
	\begin{align}
		{v}_{i}( {a}), \partial_{i} {v}_{i}( {a})\quad& \text{for all}~~  {a}\in \mathscr{V}( {K}),\label{3dh1Dof:1}\\
		( {v}_{i}, {q})_{ {e}}\quad&\text{for all}~~  {q}\in P_{k-4}( {e};\mathbb{R}), e\in E_i(K), \label{3dh1Dof:2}\\
		( {v}_{i}, {q})_{ {e}}, ( {\partial}_{i} {v}_{i}, {q})_{ {e}}\quad&\text{for all}~~  {q}\in P_{k-3}( {e};\mathbb{R}),e\in E_j(K)\cup E_l(K),\label{3dh1Dof:3}\\
		( {v}_{i},  {q})_{ {f}},( {\partial}_{i} {v}_{i},  {q})_{ {f}}\quad& \text{for all}~~  {q}\in Q_{k-3}( {f};\mathbb{R}), f\in F_i(K), \label{3dh1Dof:4}\\
		( {v}_{{i}}, \partial_{i}{q})_{ {f}}\quad&\text{for all} ~~ {q}\in Q_{k-3}( {f};\mathbb{R}),f\in F_j(K)\cup F_l(K), \label{3dh1Dof:5}\\
		( {\bm{v}},  {\bm{w}})_{ {K}}\quad&\text{for all} ~~  {\bm{w}}\in \mathring{V}_{[k]}( {K};\mb^{3}).\label{3dh1Dof:7}
	\end{align}
\end{subequations}
\end{The}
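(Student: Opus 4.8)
The plan is to follow the template of Theorem \ref{unisolvence} and of the corresponding result for $\mathcal{U}_{k,\Box}$: first match the number of functionals in \eqref{3dh1Dof} with $\dim V_{[k]}(K;\mb^{3})=3k^{2}(k+1)$, and then show that the vanishing of \eqref{3dh1Dof:1}--\eqref{3dh1Dof:7} forces $\bm{v}=0$. The counting is a direct computation of the vertex, edge, face and interior contributions; the only point that requires care is that the functionals in \eqref{3dh1Dof:5} contribute their \emph{effective} number $(k-3)(k-2)$ per face, since $\partial_{i}$ annihilates the $x_{i}$-independent part of $Q_{k-3}(f)$. Once the dimensions agree, it suffices to prove the vanishing statement.

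For the vanishing argument I would work one component $v_{i}$ at a time, recalling that $v_{i}$ has degree at most $k$ in $x_{i}$ and at most $k-1$ in the remaining two variables. On each face $f\in F_{i}(K)$ the traces $v_{i}|_{f}$ and $\partial_{i}v_{i}|_{f}$ both lie in $Q_{k-1}(f)$, and the values \eqref{3dh1Dof:1}, the edge moments of \eqref{3dh1Dof:3} on $E_{j}(K)\cup E_{l}(K)$, and the face moments \eqref{3dh1Dof:4} are exactly the vertex--edge--interior degrees of freedom of the scalar tensor-product $Q_{k-1}$ element on the rectangle $f$. By the standard unisolvence of that planar element (which itself reduces to the one-dimensional case along each edge), their vanishing gives $v_{i}|_{f}=0$ and $\partial_{i}v_{i}|_{f}=0$ on both faces of $F_{i}(K)$; hence $v_{i}$ has a double root on each plane $x_{i}=\text{const}$, so $b_{K,i}^{2}$ divides $v_{i}$, say $v_{i}=b_{K,i}^{2}\hat v_{i}$ with $\hat v_{i}$ of degree at most $k-4$ in $x_{i}$ and at most $k-1$ in $x_{j},x_{l}$.

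The main obstacle is the treatment of the faces $f\in F_{j}(K)\cup F_{l}(K)$, because the functionals in \eqref{3dh1Dof:2} and \eqref{3dh1Dof:5} touching such a face are derivative-weighted and, on their own, fail to control the full trace $v_{i}|_{f}$; the resolution is to feed in the divisibility $b_{K,i}^{2}\mid v_{i}$ just obtained. On $f\in F_{j}(K)$, which lies in the $x_{i}$--$x_{l}$ plane, one has $v_{i}|_{f}=b_{K,i}^{2}|_{f}\,\hat v_{i}|_{f}$. The edge moments \eqref{3dh1Dof:2} on the two edges of $E_{i}(K)$ bounding $f$ read $(\hat v_{i},b_{K,i}^{2}q)_{e}$; since $b_{K,i}^{2}$ is nonnegative and vanishes only at the endpoints, taking $q=\hat v_{i}|_{e}$ yields a definite quadratic form and forces $\hat v_{i}|_{e}=0$, so $b_{K,l}$ divides $\hat v_{i}|_{f}$ and we may write $\hat v_{i}|_{f}=b_{K,l}\psi$ with $\psi\in P_{k-4}(x_{i})\cdot P_{k-3}(x_{l})$. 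Finally the face functionals \eqref{3dh1Dof:5} become $(v_{i},\partial_{i}q)_{f}=(\psi,b_{K,i}^{2}b_{K,l}\,\partial_{i}q)_{f}$, and as $q$ ranges over $Q_{k-3}(f)$ the derivative $\partial_{i}q$ ranges over all of $P_{k-4}(x_{i})\cdot P_{k-3}(x_{l})$, the very space containing $\psi$; choosing $\partial_{i}q=\psi$ and noting that $b_{K,i}^{2}b_{K,l}$ has a fixed sign and is nonzero on the interior of $f$ gives $\psi=0$, hence $v_{i}|_{f}=0$. The faces of $F_{l}(K)$ are handled identically with $j$ and $l$ interchanged.

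Combining the three groups of faces shows that $v_{i}$ vanishes on all of $\partial K$ while $\partial_{i}v_{i}$ vanishes on $F_{i}(K)$, i.e. $\bm{v}\in\mathring V_{[k]}(K;\mb^{3})$, equivalently $v_{i}=b_{K}b_{K,i}q$ as in the remark preceding the statement. The interior moments \eqref{3dh1Dof:7} with $\bm{w}=\bm{v}$ then give $(\bm{v},\bm{v})_{K}=0$ and therefore $\bm{v}=0$, which proves unisolvence. I expect the genuinely nonroutine point to be the reduction on $F_{j}(K)\cup F_{l}(K)$ above: one has to notice that the $F_{i}(K)$ conditions are precisely what render the derivative-type functionals on the other faces sufficient, so that after factoring out the bubble weights everything collapses to a sign-definite $L^{2}$ pairing rather than requiring an integration-by-parts identity.
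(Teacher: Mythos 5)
Your proposal is correct and follows essentially the same route as the paper's own proof: match the count of functionals (with the same effective count $(k-3)(k-2)$ per face for \eqref{3dh1Dof:5}) against $\dim V_{[k]}(K;\mb^{3})=3(k+1)k^{2}$, deduce from the vanishing of \eqref{3dh1Dof:1}--\eqref{3dh1Dof:5} that $v_{i}=b_{K}b_{K,i}q$, and conclude with the interior moments \eqref{3dh1Dof:7}. The paper states the vanishing step without detail, and your face-by-face reduction (tensor-product $Q_{k-1}$ unisolvence on $F_{i}(K)$, then the sign-definite weighted pairings on $F_{j}(K)\cup F_{l}(K)$ after factoring out $b_{K,i}^{2}$) is exactly the argument it leaves implicit.
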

\begin{remark}
The DOFs \eqref{3dh1Dof:1}--\eqref{3dh1Dof:4} ensure the continuity of $\bm{v}$ across each interface $f$. The DOFs \eqref{3dh1Dof:1} and \eqref{3dh1Dof:3}--\eqref{3dh1Dof:5} lead to the continuity of $\ddiv\bm{v}$ across each interface $f$.
The interior moments of \eqref{3dh1Dof:7} are derived by the vanishing of \eqref{3dh1Dof:1}--\eqref{3dh1Dof:5}.
\end{remark}
\begin{proof}
	Note that ${\rm{dim}}\mathring{V}_{[k]}( {K};\mb^{3})=3(k-3)(k-2)^{2}$. The number of the DOFs \eqref{3dh1Dof} is 
	\ben
	\begin{split}
		&48+12(k-3)+48(k-2)+12(k-2)^2+12(k-2)(k-3)+3(k-3)(k-2)^{2}\\
		&=3k^{3}+3k^{2}=3(k+1)k^{2}={\rm{dim}}V_{[k]}( {K};\mb^{3}).\\
	\end{split}
	\een
	The vanishing of \eqref{3dh1Dof:1}--\eqref{3dh1Dof:5} leads to 
	$ {v}_{i}=b_{K}b_{K,i}\, q,~~\text{for some}~~q\in P_{k-4}( {x}_{i})\cdot P_{k-3}( {x}_{\overline{i-1}})\cdot P_{k-3}( {x}_{\overline{i+1}})$. Then it follows from \eqref{3dh1Dof:7} that $ {\bm{v}}=0$.
\end{proof}
In three dimensions, the corresponding global space for the $H^{1}(\Omega;\mb^{3})$ conforming finite element $V_{k, \Box}$ is defined by
\ben
\begin{split}
	V_{k, \Box}:=&\{\bm{v}\in H^1(\Omega;\mathbb{R}^3):\bm{v}|_{K}\in V_{[k]}(K;\mb^{3})~\text{for all}~~K\in\mathcal{T}_{\Box},\\
	&\text{all of the degrees of freedom \eqref{3dh1Dof} are single-valued}\}.
\end{split}
\een
 %The space $V_{k,\Box}$ possesses the continuity of $\bm{v}$ as well as $\ddiv \bm{v}$ across each interface $f\in \mathscr{F}$, and this fact shows $V_{k,\Box}\subseteq H^1(\ddiv,\Omega;\mathbb{R}^3)$. 

\begin{remark}
	\label{re:devgrad:inclu}
	For any $\bm{v}\in V_{k,\Box}$, let $\bm{u}=\operatorname{dev}\, \operatorname{grad}\, \bm{v}$, then it holds that 
	\ben
	\begin{split}
		u_{ii}&=\frac{2}{3}\partial_{i}v_i-\frac{1}{3}(\partial_{j}v_j+\partial_{l}v_{l}),\\
		u_{ij}&=\partial_{j}v_i.
	\end{split}	\een 
Here ${i, j, l}$ is a permutation of $\{1,2,3\}$. On each cube $K\in \mathcal{T}_{\Box}$, Lemma \ref{complex:poly3d:3} shows $\bm{u}|_{K}\in \mathcal{U}_{[k]}(K;\mathbb{T})$. Furthermore, the continuity of $\bm{v}$ as well as $\partial_{i}v_i$ across each interface $f\in \mathscr{F}$ implies $[u_{ii}]_{f}=0$ for all $f\in \mathscr{F}$, $[u_{ij}]_f=0$ and $[\partial_i u_{ij}]_f=0$ for all $f\in {F}_i\cup {F}_l$. Thus, together with Lemma \ref{complex:poly3d:3}, $\bm{u}\in \mathcal{U}_{k,\Box}$ follows. 
\end{remark}
%==============================================================================

%%%%%%%%%
%%%%%%%%%
\section{The finite element spaces on tetrahedral grids} 
Let $\mathcal{T}_{\triangle}$ be a tetrahedral grid of the domain $\Omega\subseteq \mathbb{R}^{3}$.
This section constructs finite element subspaces $V_{k+2,\triangle}\subseteq H^1(\Omega;\mathbb{R}^3)$, \, $\mathcal{U}_{k+1,\triangle}\subseteq H(\ssym\ccurl,\Omega;\mathbb{T})$, $\Sigma_{k,\triangle}\subseteq H(\ddiv\ddiv,\Omega;\mathbb{S})$, and $Q_{k-2,\triangle}\subseteq L^2(\Omega;\mathbb{R})$ on $\mathcal{T}_{\triangle}$. It will be proved that these finite element subspaces form an exact discrete divdiv complex of \eqref{eq:DisComp}.

Since it is difficult to construct a finite element complex of \eqref{eq:DisComp} by directly using the $H(\ddiv\ddiv,\Omega;\ms)$ conforming finite element space from \cite{2020arXiv201002638H}, a modification by moving some degrees of freedom on faces to vertices has been made here. This leads to the  $H(\ddiv\ddiv,\Omega;\ms)$ conforming finite element space $\Sigma_{k,\triangle}$. As it can be seen below, this modification in some sense enhances the regularity at vertices of the $H(\ddiv\ddiv,\Omega;\ms)$ conforming finite element space $\Sigma_{k,\triangle}$. This brings the corresponding enhancement of regularity at vertices of the remaining finite element spaces for the discrete divdiv complex, namely, $\mathcal{U}_{k+1,\triangle}$ and $V_{k+2,\triangle}$.

Given tetrahedron $K\in\mathcal{T}_{\triangle}$, let $\mathbf{x}_{1}$, $\mathbf{x}_{2}$, $\mathbf{x}_{3}$, $\mathbf{x}_{4}$ be its vertices. Let $\lambda_{i}$ denote the $i$-th barycentric coordinate of $K$, and $f_{i}$ be the face of $K$ opposite to $\mathbf{x}_{i}$. For each $f\in \mathscr{F}$, let $\lambda_{f,i}$ be the $i$-th barycentric coordinate with respect to $f$. 
%%%%%%%%%%%%%%%%%%%%%

\subsection{$H(\operatorname{div}\operatorname{div})$-conforming finite element space}
This subsection constructs the $H(\ddiv\ddiv,\Omega;\ms)$ conforming finite element space $\Sigma_{k,\triangle}$, which is a modification of that defined in 
\cite[Section 2.4]{2020arXiv201002638H} with enhanced regularity at vertices. This space consists of piecewise polynomials of degree not greater than $k$ which is a subspace of the space $\{\bsi\in H(\ddiv,\Omega;\mathbb{S}):\ddiv\bsi\in H(\ddiv,\Omega;\mathbb{R}^3)\}$.

The following two lemmas are needed for the construction of $\Sigma_{k,\triangle}$.
 \begin{lemma}[{\cite[Theorem 5.1]{MfemB}}~] 
	\label{l1}
	Given $K\in \mathcal{T}_{\triangle}$, suppose $\bm{\psi}\in P_{k}(K;\mathbb{R}^3)$ with $\ddiv\bm{\psi}=0$ and $\bm{\psi}\cdot\mn_{f}|_{f}=0$ for all faces $f\in \mathscr{F}(K)$. Then there exists some $\bm{v}\in W_{k+1}(K;\mathbb{R}^3)$ such that
	\begin{equation*}
	\bm{\psi}=\ccurl\bm{v},
	\end{equation*}
	where $W_{k+1}(K;\mathbb{R}^3)$ is defined by
	\begin{equation*}
	W_{k+1}(K;\mathbb{R}^3) :=\{\bm{\phi}\in P_{k+1}(K;\mathbb{R}^3):\bm{\phi}\times \mn_{f}|_{f}=0~~\text{for all}~f\in \mathscr{F}(K)\}.
	\end{equation*}
\end{lemma}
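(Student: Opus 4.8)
The plan is to build the potential in two stages: first produce \emph{some} polynomial potential of the correct degree, then correct its tangential trace by a gradient, which leaves the curl untouched. Since $K$ is contractible and $\ddiv\bm\psi=0$ with $\bm\psi\in P_{k}(K;\mathbb{R}^3)$, the standard exactness of the polynomial de Rham complex on a simplex (via the Koszul homotopy operator) furnishes some $\bm w\in P_{k+1}(K;\mathbb{R}^3)$ with $\ccurl\bm w=\bm\psi$. For any scalar $\chi\in P_{k+2}(K;\mathbb{R})$ one still has $\ccurl(\bm w-\nabla\chi)=\bm\psi$ and $\bm w-\nabla\chi\in P_{k+1}(K;\mathbb{R}^3)$, so it will suffice to choose $\chi$ so that $\bm v:=\bm w-\nabla\chi$ has vanishing tangential trace on every face, i.e. $\bm v\in W_{k+1}(K;\mathbb{R}^3)$.

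Next I would analyze the tangential trace face by face. Writing $\Pi_f\bm w$ for the tangential component of $\bm w$ on $f$, the hypothesis gives, on each $f\in\mathscr{F}(K)$,
\[
\rrot_f(\Pi_f\bm w)=\mn_f\cdot\ccurl\bm w|_f=\bm\psi\cdot\mn_f|_f=0,
\]
using $\rrot_f\bm v=\mn_f\cdot\ccurl\bm v$ and the fact that $\mn_f\cdot\ccurl\bm w|_f$ depends only on the tangential trace. Since each face is a contractible triangle, two-dimensional de Rham exactness provides a scalar polynomial $\phi_f$ on $f$, of degree at most $k+2$, with $\Pi_f\bm w=\nabla_f\phi_f$, unique up to an additive constant. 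The goal is then to find a single $\chi$ with $\chi|_f=\phi_f$ (modulo constants) on all four faces: then $\nabla_f(\chi|_f)=\nabla_f\phi_f=\Pi_f\bm w$ is exactly the tangential trace of $\nabla\chi$, so $\Pi_f\bm v=0$ on each $f$.

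The crux is gluing the face data into one polynomial. First I would fix the free constants: on an edge $e=f\cap f'$ the edge-tangential component $\bm w\cdot\mbt_e$ is single-valued and equals the edge derivative of both $\phi_f$ and $\phi_{f'}$, whence $\phi_f-\phi_{f'}$ is constant along $e$. Evaluating these constants at the vertices shows that around each vertex the three edge-differences sum to zero, so one can select additive constants $c_f$ making the adjusted scalars $\phi_f+c_f$ agree on every shared edge. These edge-consistent face data then extend to a polynomial $\chi\in P_{k+2}(K;\mathbb{R})$ by the usual inclusion–exclusion (face/edge/vertex) construction built from the barycentric coordinates $\lambda_i$. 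Setting $\bm v=\bm w-\nabla\chi$ then finishes the proof.

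I expect the main obstacle to be this last step: the bookkeeping that the per-face constants can be chosen consistently on all six edges at once, and that edge-consistent boundary data of degree $k+2$ genuinely extends to a degree-$(k+2)$ polynomial on $K$. As an independent check on surjectivity, one could instead argue by dimension count: $\ccurl W_{k+1}(K;\mathbb{R}^3)$ lies in the target space $B:=\{\bm\psi\in P_{k}(K;\mathbb{R}^3):\ddiv\bm\psi=0,\ \bm\psi\cdot\mn_f|_f=0\}$ (since $\rrot_f$ of a vanishing tangential trace is zero), its kernel is $\nabla$ of the scalar bubbles $\lambda_1\lambda_2\lambda_3\lambda_4\,P_{k-2}(K)$, and comparing $\dim W_{k+1}(K;\mathbb{R}^3)-\dim\big(\lambda_1\lambda_2\lambda_3\lambda_4\,P_{k-2}(K)\big)$ with $\dim B$ yields equality, hence $\ccurl W_{k+1}(K;\mathbb{R}^3)=B$.
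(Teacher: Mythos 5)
Your proof is correct. Note that the paper offers no internal proof of this lemma at all: it is quoted directly from the cited reference (Theorem 5.1 of \cite{MfemB}), so there is no in-paper argument to compare against, and a self-contained proof is a genuine addition. Your two-stage construction --- first a polynomial potential $\bm{w}\in P_{k+1}(K;\mathbb{R}^3)$ with $\ccurl\bm{w}=\bm{\psi}$ from Koszul/polynomial de Rham exactness, then a gradient correction $\nabla\chi$ whose face traces are the two-dimensional potentials $\phi_f$ of $\Pi_f\bm{w}$ --- is the standard route to this kind of ``bubble exactness'' statement, and you handle the two genuinely delicate points correctly. The compatibility of the additive constants works exactly as you say: any three faces of a tetrahedron share a vertex, and since each difference $\phi_f-\phi_{f'}$ is constant on the common edge, its value can be read off at that vertex, where the three differences telescope to zero; as the cycle space of the face-adjacency graph $K_4$ is spanned by such face triples, the constants $c_f$ exist. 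The extension step is also sound: edge-consistent face data of degree $k+2$ extends into $P_{k+2}(K)$, e.g.\ via the degree-$(k+2)$ Lagrange interpolant, whose trace on a face depends only on the nodal values on that face. The one soft spot is the closing dimension count, which you assert rather than compute (the dimension of $W_{k+1}(K;\mathbb{R}^3)$ is not obvious); but since you offer it only as an independent check, the main constructive argument stands complete without it.
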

\begin{lemma}[{\cite[Lemma 7.3]{MR2398766}}~] 
	\label{l2}Given $K\in\mathcal{T}_{\triangle}$, suppose that $\bm{\tau}\in P_{k}(K;\mathbb{S})$ with $\ddiv\bm{\tau}=0$ and $\bm{\tau}\mn_{f}|_{f} =0$ for all faces $f\in \mathscr{F}(K)$. Then there exists $\bm{u}\in M_{k+2}(K;\mathbb{S})$ such that
	\begin{equation*}
	\bm{\tau} =\ccurl\ccurl^{*}\bm{u},
	\end{equation*}
	with
	\begin{equation*}
	M_{k+2}(K;\mathbb{S}):=\{\bm{\tau}\in P_{k+2}(K;\mathbb{S}):\Lambda_{f}(\bm{\tau})|_{f}=\mathcal{Q}_f\bta\mathcal{Q}_f|_{f}=0~~\text{ for all}~f\in\mathscr{F}(K)\},
	\end{equation*}
where $\Lambda_{f}(\bm{\tau}):=\mathcal{Q}_f(2\varepsilon(\bta\mn_f)-\frac{\partial\bta}{\partial\mn_{f}}) \mathcal{Q}_f$ and $\mathcal{Q}_f:=\bm{I}-\bm{n}_{f}\bm{n}_{f}^{\intercal}$ are defined in \eqref{lambdaf} and \eqref{Qf} above, respectively.	
\end{lemma}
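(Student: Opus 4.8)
The plan is to mirror, one level up in the Bernstein--Gelfand--Gelfand hierarchy, the argument behind the preceding Lemma~\ref{l1}: there a divergence-free vector field with vanishing normal trace was exhibited as $\ccurl$ of a potential lying in the tangential bubble space $W_{k+1}(K;\mathbb{R}^3)$, and here a divergence-free \emph{symmetric} tensor with vanishing normal traction should be recognized as $\ccurl\ccurl^{*}$ of a symmetric potential in $M_{k+2}(K;\ms)$. First I would invoke exactness of the polynomial elasticity complex on the contractible element $K$: since $\bta\in P_k(K;\ms)$ is symmetric with $\ddiv\bta=0$, there is some $\bm{u}_0\in P_{k+2}(K;\ms)$ with $\ccurl\ccurl^{*}\bm{u}_0=\bta$. (This is the symmetric-tensor analogue of the polynomial de Rham exactness used for Lemma~\ref{l1}, and may be quoted directly or derived from the de Rham complex by a BGG argument.) The potential $\bm{u}_0$ is determined only up to the polynomial kernel of $\ccurl\ccurl^{*}$ inside $P_{k+2}(K;\ms)$, which on a contractible domain equals $\varepsilon\big(P_{k+3}(K;\mathbb{R}^3)\big)$; that is, I am free to replace $\bm{u}_0$ by $\bm{u}_0+\varepsilon(\bm{v})$, and the task reduces to choosing the gauge $\bm{v}\in P_{k+3}(K;\mathbb{R}^3)$ so that the two face traces $\mathcal{Q}_f\bm{u}\mathcal{Q}_f|_f$ and $\Lambda_f(\bm{u})|_f$ defining $M_{k+2}(K;\ms)$ both vanish on every $f\in\mathscr{F}(K)$.

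The bridge between hypothesis and conclusion is a boundary (Green-type) identity for $\ccurl\ccurl^{*}$ that expresses the normal traction $(\ccurl\ccurl^{*}\bm{u})\mn_f|_f$ purely through surface differential operators applied to the tangential--tangential trace $\mathcal{Q}_f\bm{u}\mathcal{Q}_f|_f$ and to $\Lambda_f(\bm{u})|_f$. Granting such an identity, the hypothesis $\bta\mn_f|_f=0$ says precisely that these two face quantities of $\bm{u}_0$ lie in the kernels of the corresponding surface operators, hence are themselves surface-potential data --- a surface symmetric gradient $\varepsilon_f$ of a tangential field, together with lower-order terms. The relevance of the gauge $\varepsilon(\bm{v})$ is that its face traces realize exactly these surface-potential forms: $\mathcal{Q}_f\varepsilon(\bm{v})\mathcal{Q}_f|_f$ equals $\varepsilon_f(\Pi_f\bm{v}|_f)$, while $\Lambda_f(\varepsilon(\bm{v}))|_f$ is controlled by the normal component and normal derivative of $\bm{v}$ on $f$. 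Prescribing the boundary data of $\bm{v}$ face by face so as to cancel the traces of $\bm{u}_0$ then yields $\bm{u}:=\bm{u}_0+\varepsilon(\bm{v})\in M_{k+2}(K;\ms)$ with $\ccurl\ccurl^{*}\bm{u}=\bta$, as required.

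I expect the main obstacle to be twofold. First is deriving the precise trace identity for $\ccurl\ccurl^{*}$ and reading off from it which surface operators annihilate the face data exactly when $\bta\mn_f=0$; this is the second-order, symmetric-tensor analogue of the relation $\ccurl\bm{v}\cdot\mn_f=\rrot_f(\Pi_f\bm{v})$ underlying Lemma~\ref{l1}, so the bookkeeping of which derivatives survive on $f$, and how $\Lambda_f$ enters, is delicate. Second, and more serious, is the global compatibility of the face-by-face gauge choices across the shared edges and vertices of $K$: the surface potentials chosen on adjacent faces, together with their relevant normal derivatives, must agree along each common edge so that a single $\bm{v}\in P_{k+3}(K;\mathbb{R}^3)$ realizes all four faces simultaneously --- the analogue of matching the face potentials along edges in Lemma~\ref{l1}, now to be checked for both the tangential-gradient datum and the $\Lambda_f$ datum. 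As an alternative that avoids explicit gauge fixing, I would instead prove the equivalent surjectivity statement by a dimension count: verify via the same trace identity that $\ccurl\ccurl^{*}$ maps $M_{k+2}(K;\ms)$ into $\{\bta\in P_k(K;\ms):\ddiv\bta=0,\ \bta\mn_f|_f=0\ \text{for all}\ f\}$, compute the kernel $\varepsilon\big(P_{k+3}(K;\mathbb{R}^3)\big)\cap M_{k+2}(K;\ms)$, and match dimensions using the exactness counts of the polynomial elasticity complex.
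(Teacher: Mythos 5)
Note first that the paper contains no proof of this statement: Lemma~\ref{l2} is imported verbatim from \cite[Lemma 7.3]{MR2398766} (Arnold--Awanou--Winther), exactly as Lemma~\ref{l1} is imported from its own source, so there is no in-paper argument to compare against. Your outline is in fact a faithful reconstruction of the strategy of the cited source: use exactness of the polynomial elasticity (BGG) complex on the contractible element $K$ to produce a potential $\bm{u}_0\in P_{k+2}(K;\ms)$ with $\ccurl\ccurl^{*}\bm{u}_0=\bta$, observe that the gauge freedom is precisely $\varepsilon\bigl(P_{k+3}(K;\mathbb{R}^3)\bigr)$, and then fix the gauge so that the two trace data $\mathcal{Q}_f\bm{u}\mathcal{Q}_f|_f$ and $\Lambda_f(\bm{u})|_f$ vanish on every face. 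Those first two steps are correct as you state them.

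However, as submitted the proposal is an outline rather than a proof: the two obstacles you flag are not side issues but the entire mathematical content of \cite[Lemma 7.3]{MR2398766}, and neither is discharged. Concretely, (i) you must actually establish that $(\ccurl\ccurl^{*}\bm{u})\mn_f|_f$ is expressed by second-order surface operators acting only on $\mathcal{Q}_f\bm{u}\mathcal{Q}_f|_f$ and $\Lambda_f(\bm{u})|_f$, and then characterize the kernel of those operators on each face as the trace data of $\varepsilon(\bm{v})$ --- this is a two-dimensional elasticity-complex statement on $f$ that needs its own exactness argument; and (ii) the face-by-face choices of $\bm{v}$ (its tangential part, normal component, and normal derivative on each $f$) must be shown compatible along the edges and at the vertices of $K$ so that a single polynomial $\bm{v}\in P_{k+3}(K;\mathbb{R}^3)$ interpolates all of them, which requires careful degree and corner bookkeeping. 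Your fallback dimension count also has a latent circularity: the dimension of $\{\bta\in P_k(K;\ms):\ddiv\bta=0,\ \bta\mn_f|_f=0\ \text{for all}\ f\}$ is not independently available --- in \cite{MR2398766} the dimension of $\ccurl\ccurl^{*}M_{k+2}(K;\ms)$ (quoted in this paper after \eqref{M}) is obtained by essentially the same trace analysis, so one cannot simply ``match dimensions'' without first doing the work of step (i). In short: right road map, same route as the cited reference, but the proof of the lemma is not yet there.
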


Define the following two spaces
\begin{equation}
\label{W}
\mathcal{W}_{k-1}(K;\mathbb{R}^3):=\ccurl W_{k}(K;\mathbb{R}^3)/RM,
\end{equation}
and
\begin{equation}
\label{M}
\mathcal{M}_{k}(K;\mathbb{S}):= \ccurl\ccurl^{*}M_{k+2}.
\end{equation}
The dimensions of these two spaces are \cite[Theorem 2.15]{2020arXiv201002638H}:
\begin{equation*}
\operatorname{dim}\mathcal{W}_{k-1}(K;\mathbb{R}^3) = \frac{2k^3-3k^2-5k-12}{6},
\end{equation*}
and \cite[Theorem 7.2]{MR2398766}:
\begin{equation*}
\operatorname{dim}\mathcal{M}_{k}(K;\mathbb{S}) = \frac{k^3-3k^2-4k+12}{2}.
\end{equation*}

The DOFs of the $H(\ddiv\ddiv,\Omega;\ms)$ finite element space $\Sigma_{k,\triangle}$ are stated in the following theorem.
\begin{theorem}
\label{sigma:dofs}
	Given tetrahedron $K\in \mathcal{T}_{\triangle}$, the symmetric matrix valued polynomial $\bsi\in P_{k}(K;\ms)$ with $k\geq 3$ can be uniquely determined by the following conditions:
	\begin{subequations}\label{DOF:Sig}
		\begin{align}
			\bsi(a),\ddiv\bsi(a) \quad &\text{for all}~a\in\mathscr{V}(K),\label{DOF:Sig1}\\
			(\bm{t}_{e}^{T}\bsi\bm{n}_{e}^{\pm}, q)_{e},((\bm{n}_{e}^{\pm})^{T}\bsi\bm{n}_{e}^{\pm}, q)_{e} \quad&\text{for all}~q\in P_{k-2}(e;\mathbb{R}),~e\in \mathscr{E}(K),\label{DOF:Sig2}\\
			(\bsi\bm{n}_{f}, \bm{q})_{f} \quad &\text{for all}~\bm{q}\in P_{k-3}(f;\mathbb{R}^3),~f\in \mathscr{F}(K),\label{DOF:Sig3}\\
			(\ddiv\bsi\cdot\bm{n}_{f}, {q})_{f}  \quad&\text{for all}~{q}\in
			{\widetilde{P}}_{k-1}(f;\mathbb{R}),~f\in\mathscr{F}(K),\label{DOF:Sig4}\\
			(\bsi, \nabla^2q)_{K} \quad &\text{for all}~q\in P_{k-2}(K;\mathbb{R}),\label{DOF:Sig5}\\
			(\bsi,\nabla\bm{q})_{K} \quad&\text{for all}~\bm{q}\in \mathcal{W}_{k-1}(K;\mathbb{R}^3),\label{DOF:Sig6}\\
			(\bsi,\bm{\tau})_{K} \quad&\text{for all}~\bm{\tau}\in \mathcal{M}_{k}(K;\mathbb{S}).\label{DOF:Sig7}
		\end{align}
	\end{subequations}
Here $ {\widetilde{P}}_{k-1}(f;\mathbb{R}):=\{{q}\in P_{k-1}(f;\mathbb{R}):{q}\, \text{vanishes at all vertices of }f\}$.
\end{theorem}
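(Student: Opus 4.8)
The plan is to argue unisolvence in the usual two stages: match the number of conditions in \eqref{DOF:Sig} with $\dim P_k(K;\ms)=(k+1)(k+2)(k+3)$, and then show that a symmetric $\bsi\in P_k(K;\ms)$ annihilated by every functional in \eqref{DOF:Sig} must vanish, the latter driven by the polynomial divdiv complex together with Lemmas~\ref{l1} and~\ref{l2}. For the count, \eqref{DOF:Sig1} contributes $9$ per vertex ($6$ for $\bsi(a)$, $3$ for $\ddiv\bsi(a)$); \eqref{DOF:Sig2} contributes $5(k-1)$ per edge, reading it as the two moments $\bm{t}_e^{\intercal}\bsi\bm{n}_e^{\pm}$ together with the three moments $(\bm{n}_e^{\pm})^{\intercal}\bsi\bm{n}_e^{\pm}$ (the mixed term $(\bm{n}_e^{+})^{\intercal}\bsi\bm{n}_e^{-}$ included); and \eqref{DOF:Sig5} contributes only $\dim P_{k-2}(K)-\dim P_1(K)$ independent conditions, since $\nabla^2$ annihilates $P_1$. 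Adding the face contributions of \eqref{DOF:Sig3}--\eqref{DOF:Sig4} and the interior contributions given by the stated dimensions of $\mathcal{W}_{k-1}(K;\mathbb{R}^3)$ and $\mathcal{M}_k(K;\ms)$, the sum collapses to $(k+1)(k+2)(k+3)$.

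Assume now all functionals in \eqref{DOF:Sig} vanish. On each edge $e$, every component of $\bsi$ appearing in $\bsi\bm{n}_f$ for a face $f\supset e$ (namely $\bm{t}_e^{\intercal}\bsi\bm{n}_e^{\pm}$ and the three normal–normal components) is, restricted to $e$, a polynomial in $P_k(e)$ vanishing at the two endpoints by \eqref{DOF:Sig1} and with vanishing $P_{k-2}(e)$-moments by \eqref{DOF:Sig2}, hence identically zero; therefore $\bsi\bm{n}_f$ vanishes on $\partial f$. Being then a face bubble $\lambda_{f,1}\lambda_{f,2}\lambda_{f,3}\,\bm{p}$ with $\bm{p}\in P_{k-3}(f;\mathbb{R}^3)$, testing against $\bm{p}$ in \eqref{DOF:Sig3} forces $\bsi\bm{n}_f|_f=0$ on every face. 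Likewise $\ddiv\bsi\cdot\bm{n}_f|_f\in P_{k-1}(f)$ vanishes at the three vertices of $f$ by \eqref{DOF:Sig1}, so it lies in $\widetilde{P}_{k-1}(f)$, and \eqref{DOF:Sig4} then forces $\bm{n}_f^{\intercal}\ddiv\bsi|_f=0$ on every face.

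With $\bsi\bm{n}_f|_f=0$ and $\bm{n}_f^{\intercal}\ddiv\bsi|_f=0$ the Green identity
\begin{equation*}
(\ddiv\ddiv\bsi,q)_K=(\bsi,\nabla^2 q)_K-\sum_{f\in\mathscr{F}(K)}(\bsi\bm{n}_f,\nabla q)_f+\sum_{f\in\mathscr{F}(K)}(\bm{n}_f^{\intercal}\ddiv\bsi,q)_f
\end{equation*}
reduces to $(\ddiv\ddiv\bsi,q)_K=(\bsi,\nabla^2 q)_K$, so \eqref{DOF:Sig5} gives $(\ddiv\ddiv\bsi,q)_K=0$ for all $q\in P_{k-2}(K)$; choosing $q=\ddiv\ddiv\bsi$ yields $\ddiv\ddiv\bsi=0$. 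Thus $\ddiv\bsi\in P_{k-1}(K;\mathbb{R}^3)$ is divergence-free with vanishing normal trace, and Lemma~\ref{l1} gives $\ddiv\bsi=\ccurl\bm{v}$ with $\bm{v}\in W_k(K;\mathbb{R}^3)$. Since $\bsi\bm{n}_f|_f=0$, integration by parts gives $(\bsi,\nabla\bm{q})_K=-(\ddiv\bsi,\bm{q})_K$, so \eqref{DOF:Sig6} makes $\ddiv\bsi$ orthogonal in $L^2(K)$ to $\mathcal{W}_{k-1}(K;\mathbb{R}^3)$ within $\ccurl W_k(K;\mathbb{R}^3)$; by the definition \eqref{W} of $\mathcal{W}_{k-1}$ as the quotient by $RM$ this leaves $\ddiv\bsi$ a rigid motion, which the vertex values $\ddiv\bsi(a)=0$ (a rigid motion vanishing at the four non-coplanar vertices is zero) remove, so $\ddiv\bsi=0$. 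Finally $\bsi$ is symmetric, divergence-free, with $\bsi\bm{n}_f|_f=0$; Lemma~\ref{l2} places $\bsi\in\mathcal{M}_k(K;\ms)$, and \eqref{DOF:Sig7} tested against $\bsi$ itself gives $\|\bsi\|_{0,K}^2=0$, so $\bsi=0$.

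The step I expect to be most delicate is the boundary reduction of the second paragraph: it hinges on the edge moments \eqref{DOF:Sig2} controlling all five normal-involving scalar components of $\bsi$ on each edge—including the mixed normal–normal term $(\bm{n}_e^{+})^{\intercal}\bsi\bm{n}_e^{-}$, without which the three components of $\bsi\bm{n}_f$ could not all be recovered—and on correctly matching $\bm{n}_e^{\pm}$ with the two faces sharing $e$. The other subtle point is the use of \eqref{DOF:Sig6}: verifying that $L^2$-orthogonality to $\mathcal{W}_{k-1}=\ccurl W_k/RM$ leaves precisely a rigid-motion ambiguity, which is exactly the freedom the newly introduced vertex functionals $\ddiv\bsi(a)$ are designed to kill. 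This is where the modification that moves face degrees of freedom to the vertices enters essentially.
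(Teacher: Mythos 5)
Your proposal follows the paper's skeleton almost exactly (dimension count; edge-to-face boundary reduction giving $\bsi\mn_f|_f=0$ and $\ddiv\bsi\cdot\mn_f|_f=0$; Green's identity giving $\ddiv\ddiv\bsi=0$; Lemma \ref{l1}; finally Lemma \ref{l2} with self-testing in \eqref{DOF:Sig7}), and those parts are correct --- including your correct reading that the mixed component $(\bm{n}_e^{+})^{\intercal}\bsi\bm{n}_e^{-}$ must be among the edge moments \eqref{DOF:Sig2}. But there is a genuine gap at the rigid-motion step. You argue: $\ddiv\bsi\in\ccurl W_k(K;\mathbb{R}^3)$ and $\ddiv\bsi\perp\mathcal{W}_{k-1}(K;\mathbb{R}^3)$, hence $\ddiv\bsi\in RM$, to be killed by the vertex values $\ddiv\bsi(a)=0$. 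This presumes $\ccurl W_k=\mathcal{W}_{k-1}\oplus RM$, i.e.\ $RM\subseteq\ccurl W_k$ --- which is false. By Lemma \ref{l1}, $\ccurl W_k$ consists exactly of the divergence-free fields in $P_{k-1}(K;\mathbb{R}^3)$ with vanishing normal trace on every face (indeed $\mn_f\cdot\ccurl\bm{\phi}=\ddiv_f(\bm{\phi}\times\mn_f)=0$ whenever $\bm{\phi}\times\mn_f|_f=0$), and no nonzero rigid motion is tangent to all four faces of a tetrahedron, so $RM\cap\ccurl W_k=\{0\}$. Consequently, orthogonality to $\mathcal{W}_{k-1}$ only places $\ddiv\bsi$ in the orthogonal complement of $\mathcal{W}_{k-1}$ inside $\ccurl W_k$, which is the $L^2$-projection of $RM$ onto $\ccurl W_k$ --- in fact only of the rotations, since every $w=\ccurl\bm{\phi}\in\ccurl W_k$ satisfies $\int_K w=\int_{\partial K}\mn\times\bm{\phi}=0$ and is automatically orthogonal to translations; this complement is $3$-dimensional, consistent with $\operatorname{dim}\mathcal{W}_{k-1}=\operatorname{dim}\ccurl W_k-3$. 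Its elements are degree-$(k-1)$ fields that are \emph{not} rigid motions, so your observation that a rigid motion vanishing at four non-coplanar points is zero does not apply, and you have not shown that no nonzero element of this $3$-dimensional space vanishes at the vertices.

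The missing idea --- and the paper's actual step --- is to prove directly that $\ddiv\bsi\perp RM$, using the \emph{symmetry} of $\bsi$ together with the already established $\bsi\mn_f|_f=0$: for any $r\in RM$, integration by parts gives
\begin{equation*}
(\ddiv\bsi,r)_K=-(\bsi,\nabla r)_K+\sum_{f\in\mathscr{F}(K)}(\bsi\mn_f,r)_f=-(\bsi,\varepsilon(r))_K=0,
\end{equation*}
since $\varepsilon(r)=0$ for rigid motions. Hence $\ddiv\bsi$ lies in $\mathcal{W}_{k-1}$ \emph{itself}, and then \eqref{DOF:Sig6} combined with the same integration by parts, tested with $\bm{q}=\ddiv\bsi$, yields $\|\ddiv\bsi\|_{0,K}^2=0$. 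Note that this step uses the vertex values $\ddiv\bsi(a)$ not at all; as your second paragraph correctly shows, their role is solely to place $\ddiv\bsi\cdot\mn_f|_f$ in $\widetilde{P}_{k-1}(f;\mathbb{R})$ so that \eqref{DOF:Sig4} annihilates it --- that is the only place where the modification moving face DOFs to vertices enters.
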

\begin{remark}
The DOFs \eqref{DOF:Sig1}--\eqref{DOF:Sig3} ensure the continuity of $\bsi\mn_{f}$ across each interface $f$, and the DOFs \eqref{DOF:Sig1} as well as \eqref{DOF:Sig4} lead to $\ddiv\bsi\in H(\ddiv;\mathbb{R}^{3})$.
\end{remark}
\begin{proof}
	The proof is similar as that in \cite[Theorem 2.15]{2020arXiv201002638H}. The number of all degrees of freedom \eqref{DOF:Sig} is
	\ben
	\begin{split}
	&24+12+30(k-1)+6(k-1)(k-2)+4k(k+1)-12+\frac{(k+1)k(k-1)}{6}-4\\
	&+\frac{2k^3-3k^2-5k-12}{6}+\frac{k^3-3k^2-4k+12}{2}=(k+1)(k+2)(k+3)=\operatorname{dim}P_{k}(K;\ms).
	\end{split}
	\een 
	It suffices to prove if the degrees of freedom \eqref{DOF:Sig} vanish for some $\bsi\in P_{k}(K;\ms)$, then $\bsi=0$. For $q\in P_{k-2}(K)$, an integration by parts and \eqref{DOF:Sig1}, \eqref{DOF:Sig3}--\eqref{DOF:Sig5} lead to 
	\ben
	(\ddiv \ddiv \bsi, q)_{K} = (\bsi,\nabla^{2}q)_{K}-\sum_{f\in \mathscr{F}(K)}(\bsi\mn_f,\nabla q)_{f}+\sum_{f\in \mathscr{F}(K)}(\ddiv\bsi\cdot\mn_f,q)_{f}=0.
	\een
	Thus it follows that $\ddiv \ddiv\bsi=0$. Therefore, Lemma \ref{l1}, \eqref{DOF:Sig1}, and \eqref{DOF:Sig4} show that there exists some $\bm{v}\in W_{k}(K;\mathbb{R}^{3})$, such that $\ddiv\bsi=\ccurl \bm{v}$. Furthermore, \eqref{DOF:Sig1}--\eqref{DOF:Sig3} yield 
	\be
	\label{ela_}
	\bsi\mn_f|_{f}=0~~~~\text{for all}~f\in\mathscr{F}(K). 
	\ee
	This and an integration by parts imply the orthogonality of $\ddiv\bsi$ and the rigid motion space $RM$. Then $\ccurl \bm{v}=0$ can be derived from \eqref{DOF:Sig6} and \eqref{ela_}. That is $\ddiv \bsi=0$.  Hence Lemma \ref{l2} shows $\bsi=\ccurl \ccurl^{\ast}\bm{\omega}$ for some $\bm{\omega}\in M_{k+2}(K;\ms)$. Finally it follows from \eqref{DOF:Sig7} that $\bsi=0$.  
\end{proof}
%Compared to the DOFs of the $H(\ddiv\ddiv,\Omega;\ms)$ finite element space in \cite{2020arXiv201002638H}, the above DOFs of $\Sigma_{k,\triangle}$ is similar but with additional continuity properties of $\ddiv\bsi$ at vertices.
\iffalse
\begin{remark}
As a comparison, the DOFs of the $H(\ddiv\ddiv,\Omega;\ms)$ conforming finite element presented in \cite[Section 2.4]{2020arXiv201002638H} are
\begin{subequations}\label{DOF:Sig:0}
		\begin{align}
			\bsi(a) \quad &\text{for all}~a\in\mathscr{V}(K),\label{DOF:Sig1:0}\\
			(\bm{t}_{e}^{T}\bsi\bm{n}_{e}^{\pm}, q)_{e},((\bm{n}_{e}^{\pm})^{T}\bsi\bm{n}_{e}^{\pm}, q)_{e} \quad&\text{for all}~q\in P_{k-2}(e;\mathbb{R}),~e\in \mathscr{E}(K),\label{DOF:Sig2:0}\\
			(\bsi\bm{n}_{f}, \bm{q})_{f} \quad &\text{for all}~\bm{q}\in P_{k-3}(f;\mathbb{R}^3),~f\in \mathscr{F}(K),\label{DOF:Sig3:0}\\
			(\ddiv\bsi\cdot\bm{n}_{f}, {q})_{f}  \quad&\text{for all}~{q}\in
			{{P}}_{k-1}(f;\mathbb{R}),~f\in\mathscr{F}(K),\label{DOF:Sig4:0}\\
			(\bsi, \nabla^2q)_{K} \quad &\text{for all}~q\in P_{k-2}(K;\mathbb{R}),\label{DOF:Sig5:0}\\
			(\bsi,\nabla\bm{q})_{K} \quad&\text{for all}~\bm{q}\in \mathcal{W}_{k-1}(K;\mathbb{R}^3),\label{DOF:Sig6:0}\\
			(\bsi,\bm{\tau})_{K} \quad&\text{for all}~\bm{\tau}\in \mathcal{M}_{k}(K;\mathbb{S}).\label{DOF:Sig7:0}
		\end{align}
	\end{subequations}
	\end{remark}
\fi
In three dimensions, the corresponding global space for the $H({\rd}\, {\bd},\Omega;\ms)$ conforming element $\Sigma_{k, \triangle}$ is defined by
\be
\label{glo-sigma}
\begin{split}
	\Sigma_{k, \triangle}:=&\{\bsi\in H({\rd}\, {\bd},\Omega;\ms): \bsi|_{K}\in P_k(K;\ms)~\text{for all}~~K\in\mathcal{T}_{\triangle},\\
	&\text{all of the degrees of freedom \eqref{DOF:Sig} are single-valued}\}.
\end{split}
\ee

\begin{remark}
The $H(\ddiv\ddiv,\Omega;\ms)$ conforming finite element space in \cite[(3.1)]{2020arXiv201002638H} for solving the fourth order problem can be replaced by  $\Sigma_{k,\triangle}$ in \eqref{glo-sigma} with $k\geq 3$. Using the similar arguments as those in \cite[Section 3]{{2020arXiv201002638H}}, the well-posedness of this new mixed finite element can be proved. 
However, to acquire the exact finite element divdiv complex of \eqref{eq:DisComp} on $\mathcal{T}_{\triangle}$ in this paper, $k$ has to be greater than $3$, since the finite element spaces $\mathcal{U}_{k+1,\triangle}\subseteq H(\ssym\ccurl,\Omega;\mathbb{T})$ and $V_{k+2,\triangle}\subseteq H^1(\Omega;\mathbb{R}^3)$ constructed below are well-defined only for $k\geq 4$.
\end{remark}

%%%%%%%%%%%%%%%%%%%%%%%%%%%%%%%%%%%%%%%%%%%%
\iffalse
\subsection{$L^2$-conforming finite element space}
Set $Q_{k-2,\triangle}$ to be the piecewise polynomials of degree $k-2$. It is easy to see that such functions can be uniquely determined by the following values:
\begin{equation*}
(q, u)_{K}\quad \text{for all}~K\in\mathcal{T}_{h}~\text{and}~~u\in P_{k-2}(K;\mathbb{R}).
\end{equation*} 
\fi
%%%%%%%%%%%%%%%%%%%%%%%%%%%%%%%%%%%%%%%%%%%%

\subsection{$H(\operatorname{sym}\operatorname{curl})$-conforming finite element space}
This subsection constructs the $H(\ssym\ccurl,\Omega;\mathbb{T})$ conforming finite element space $\mathcal{U}_{k+1,\triangle}$, which consists of piecewise polynomials of degree not greater than $k+1$ with $k\geq 4$. As mentioned in Section 3.3, the difficulty is to characterize the continuity of functions in the space $\mathcal{U}_{k+1,\triangle}$. Besides the $H(\ssym\ccurl,\Omega;\mathbb{T})$ conformity, the inclusion condition $\ssym \ccurl \mathcal{U}_{k+1,\triangle} \subseteq \Sigma_{k,\triangle}$ is also required. Recall
$$H^{\ast}(\ssym \ccurl,\Omega;\mathbb{T}):= \{\bm{u}\in H(\ccurl,\Omega;\mathbb{T}): \ddiv^{*}\bm{u}\in H(\ccurl,\Omega;\mathbb{R}^3)\}$$ from \eqref{def:symcurlast}. Motivated by Lemma \ref{sigma-u}, the global regularity $$\mathcal{U}_{k+1,\triangle}\subseteq H^{\ast}(\ssym \ccurl,\Omega;\mathbb{T})$$ is also imposed herein. 

To construct the DOFs of $\mathcal{U}_{k+1,\triangle}\subseteq H^{\ast}(\ssym \ccurl,\Omega;\mathbb{T})$ on $\mathcal{T}_{\triangle}$, a polynomial bubble function space is defined by  
\be
\begin{split}
\label{pdiv}
\mathring{\mathcal{P}}^{\ddiv}_{k+2}(K;\mathbb{R}^{3}):=\{&\bm{q}\in \lambda_{1}\lambda_{2}\lambda_{3}\lambda_{4}P_{k-2}(K;\mathbb{R}^{3}): \\
&\ddiv \bm{q}|_{f}=0~~\text{for all}~f\in \mathscr{F}(K)\}.
\end{split}
\ee
Note that for any $\bm{q}\in \mathring{\mathcal{P}}^{\ddiv}_{k+2}(K;\mathbb{R}^{3})$, there exists $\bm{p}\in P_{k-2}(K;\mathbb{R}^{3})$ with $\bm{p}\cdot\mn_f|_{f}=0$ for all $f\in \mathscr{F}(K)$ such that 
$\bm{q}=\lambda_{1}\lambda_{2}\lambda_{3}\lambda_{4}\bm{p}$.

\begin{theorem}
	\label{th3-8}
	Given tetrahedron $K\in \mathcal{T}_{\triangle}$, the traceless matrix valued polynomial $\bm{u}\in P_{k+1}(K;\mathbb{T})$ with $k\geq 4$ can be uniquely determined by the following conditions:
	\begin{subequations}\label{DOF:Lam}
		\begin{align}
			\bm{u}(a),\nabla\bm{u}(a),\nabla(\ddiv^{*}\bm{u})(a) \quad &\text{for all}~a\in \mathscr{V}(K), \label{DOF:Lam1} \\
			(\bm{u},\bm{\omega})_{e} \quad &\text{for all}~ \bm{\omega}\in P_{k-3}(e;\mathbb{T}), e\in \mathscr{E}(K), \label{DOF:Lam2}\\
			(\ddiv^{*}\bm{u},\bm{q})_{e} \quad &\text{for all}~ \bm{q}\in P_{k-4}(e;\mathbb{R}^3), e\in \mathscr{E}(K), \label{DOF:Lam3}\\
			(\bm{t}_{e}^{T}\bsi\bm{n}_{e}^{\pm},q)_{e},((\bm{n}_{e}^{\pm})^{T}\bsi\bm{n}_{e}^{\pm}, q)_{e} \quad &\text{for all}~q\in  P_{k-2}(e;\mathbb{R}),~e\in\mathscr{E}(K), \label{DOF:Lam4}\\
			(\bm{u}\times \bm{n}_{f},\nabla_{f}\bm{q})_{f} \quad &\text{for all}~\bm{q}\in P_{k-3}(f;\mathbb{R}^3),~f\in\mathscr{F}(K), \label{DOF:Lam5}\\
			(\bm{u}\times \bm{n}_{f},\ccurl_{f}\bm{q})_{f} \quad & \text{for all}~\bm{q}\in \mathring{\mathcal{P}}_{k+2}(f;\mathbb{R}^3),~f\in\mathscr{F}(K), \label{DOF:Lam6}\\
			((\ddiv^{*}\bm{u})\times\bm{n}_{f},\bm{n}_{f}\times\bm{q}\times\bm{n}_{f})_{f} \quad&\text{for all}~\bm{q}\in P_{k-3}(f;\mathbb{R}^3),~f\in\mathscr{F}(K), \label{DOF:Lam7}\\
			(\bsi,\nabla\bm{q})_{K} \quad&\text{for all}~\bm{q}\in \mathcal{W}_{k-1}(K;\mathbb{R}^3), \label{DOF:Lam8}\\
			(\bsi,\bm{\omega})_{K} \quad &\text{for all}~\bm{\omega}\in \mathcal{M}_{k}(K;\mathbb{S}), \label{DOF:Lam9}\\
			(\bm{u},\operatorname{dev}\ggrad\bm{q})_{K} \quad &\text{for all}~\bm{q}\in \mathring{\mathcal{P}}^{\ddiv}_{k+2}(K;\mathbb{R}^{3}).\label{DOF:Lam10}	
		\end{align}
	\end{subequations}
	Here $\bm{n}_{e}^{\pm}$ are two linearly independent normal vectors of the edge $e$, $\bm{t}_{e}$ is the unit tangential vector of edge $e$, $\mathring{\mathcal{P}}_{k+2}(f;\mathbb{R}^{3}):=(\lambda_{f,1}\lambda_{f,2}\lambda_{f,3})^2P_{k-4}(f;\mathbb{R}^3)$, $\mathcal{W}_{k-1}(K;\mathbb{R}^3)$ is defined in \eqref{W}, $\mathcal{M}_{k}(K;\mathbb{S})$ is defined in \eqref{M},
	and $\bsi:=\ssym\ccurl \bm{u}$.
\end{theorem}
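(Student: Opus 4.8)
The plan is the standard two-step unisolvence argument: first verify that the number of conditions in \eqref{DOF:Lam} equals $\dim P_{k+1}(K;\mathbb{T})=8\binom{k+4}{3}$, and then, assuming all functionals in \eqref{DOF:Lam} vanish for some $\bm{u}\in P_{k+1}(K;\mathbb{T})$, show $\bm{u}=0$. For the dimension count I would sum the vertex, edge, face, and interior contributions as in the proof of Theorem \ref{sigma:dofs}, using $\dim\mathcal{W}_{k-1}(K;\mb^3)$ and $\dim\mathcal{M}_k(K;\ms)$ recorded above together with $\dim\mathring{\mathcal{P}}^{\ddiv}_{k+2}(K;\mb^3)$; the vertex enhancement $\nabla(\ddiv^{*}\bm{u})(a)$ is precisely what lets \eqref{DOF:Lam4} use $\widetilde P_{k-1}$ rather than $P_{k-1}$ on faces, mirroring \eqref{DOF:Sig4}.

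The heart of the injectivity proof is to show $\bsi:=\ssym\ccurl\bm{u}=0$ by checking that $\bsi$ annihilates every functional of Theorem \ref{sigma:dofs}, so its unisolvence forces $\bsi=0$. Several match directly: \eqref{DOF:Sig2} is literally \eqref{DOF:Lam4}, and \eqref{DOF:Sig6}, \eqref{DOF:Sig7} are \eqref{DOF:Lam8}, \eqref{DOF:Lam9}. The vertex data \eqref{DOF:Sig1} follow from \eqref{DOF:Lam1}: $\bsi(a)$ depends only on $\nabla\bm{u}(a)$, while $\ddiv\bsi=\tfrac12\ccurl(\ddiv^{*}\bm{u})$ by Lemma \ref{sigma-u}, so $\ddiv\bsi(a)$ depends only on $\nabla(\ddiv^{*}\bm{u})(a)$. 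For the faces I would use the identities of Lemma \ref{sigma-u}: $\bsi\mn_f=(\ccurl\bm{u})\mn_f-\tfrac12(\ddiv^{*}\bm{u})\times\mn_f$ expresses $\bsi\mn_f$ through the surface curl of the tangential trace $\bm{u}\times\mn_f$ and through $(\ddiv^{*}\bm{u})\times\mn_f$; the former is controlled by \eqref{DOF:Lam5}--\eqref{DOF:Lam6}, which exhaust the $P_{k-3}$ moments of $\bm{u}\times\mn_f$ by a rowwise surface Helmholtz decomposition, and the latter by \eqref{DOF:Lam7}, giving \eqref{DOF:Sig3}. Likewise $\ddiv\bsi\cdot\mn_f=\tfrac12\rrot_f(\ddiv^{*}\bm{u})$ depends only on $(\ddiv^{*}\bm{u})\times\mn_f$, so a surface integration by parts on $f$ reduces its moment against $q\in\widetilde P_{k-1}(f;\mb)$ to the tangential moments controlled by \eqref{DOF:Lam7}, the edge/vertex boundary terms being killed by \eqref{DOF:Lam1}, \eqref{DOF:Lam3}; this is \eqref{DOF:Sig4}. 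Finally $\ddiv\ddiv\bsi\equiv0$ because $\ddiv\ddiv\,\ssym\ccurl=0$, so Green's identity together with the just-established \eqref{DOF:Sig3}--\eqref{DOF:Sig4} yields \eqref{DOF:Sig5}. Hence all functionals of Theorem \ref{sigma:dofs} vanish and $\bsi=\ssym\ccurl\bm{u}=0$.

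Once $\ssym\ccurl\bm{u}=0$, the exactness of the polynomial divdiv complex gives $\bm{u}=\ddev\ggrad\bm{v}$ for some $\bm{v}\in P_{k+2}(K;\mb^3)$, unique up to $RT$. I would then argue that the remaining boundary functionals pin down $\bm{v}$ (modulo $RT$) as a constrained bubble: the vertex data \eqref{DOF:Lam1} fix $\bm{v}$ and $\nabla\bm{v}$ at vertices, the edge data \eqref{DOF:Lam2}--\eqref{DOF:Lam3} fix $\bm{v}$ along edges, and the face data \eqref{DOF:Lam5}--\eqref{DOF:Lam7}, rewritten through $\bm{u}=\nabla\bm{v}-\tfrac13(\ddiv\bm{v})\bm{I}$, force $\bm{v}|_f=0$ and $\ddiv\bm{v}|_f=0$ on every face; thus after subtracting a suitable element of $RT$ one may take $\bm{v}\in\mathring{\mathcal{P}}^{\ddiv}_{k+2}(K;\mb^3)$. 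Testing \eqref{DOF:Lam10} against this very $\bm{v}$ gives $0=(\bm{u},\ddev\ggrad\bm{v})_K=(\ddev\ggrad\bm{v},\ddev\ggrad\bm{v})_K=\|\bm{u}\|_{0,K}^2$, whence $\bm{u}=0$.

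The main obstacle is the face step of the second paragraph: converting the two surface identities of Lemma \ref{sigma-u} into exact statements about polynomial moment spaces, and in particular matching $(\ddiv\bsi\cdot\mn_f,q)_f$ for $q\in\widetilde P_{k-1}(f;\mb)$ with the data \eqref{DOF:Lam7} by surface integration by parts while absorbing every edge and vertex boundary term into \eqref{DOF:Lam1} and \eqref{DOF:Lam3}. The second delicate point is verifying in the third paragraph that the potential $\bm{v}$ genuinely lies in $\mathring{\mathcal{P}}^{\ddiv}_{k+2}(K;\mb^3)$, that is, $\ddiv\bm{v}|_f=0$ and not merely $\bm{v}|_f=0$; this is exactly where the enhanced vertex regularity built into \eqref{DOF:Lam1} is consumed.
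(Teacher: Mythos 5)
Your overall architecture is the same as the paper's: match the dimension count, show that $\bsi=\ssym\ccurl\bm{u}$ annihilates every functional of Theorem \ref{sigma:dofs} and hence vanishes by its unisolvence, then write $\bm{u}=\operatorname{dev}\ggrad\bm{v}$ with $\bm{v}\in\mathring{\mathcal{P}}^{\ddiv}_{k+2}(K;\mathbb{R}^{3})$ and test \eqref{DOF:Lam10} with $\bm{v}$ itself. But the face step---which you yourself flag as the main obstacle---contains a genuine gap that pure moment matching cannot close. For \eqref{DOF:Sig4}, your surface integration by parts reduces $(\ddiv\bsi\cdot\mn_{f},q)_{f}=\tfrac12(\rrot_{f}(\ddiv^{*}\bm{u}),q)_{f}$ to moments of $(\ddiv^{*}\bm{u})\times\mn_{f}$ against $\nabla_{f}q$; for $q\in\widetilde{P}_{k-1}(f;\mathbb{R})$ the field $\nabla_{f}q$ is a tangential polynomial of degree $k-2$, whereas \eqref{DOF:Lam7} only supplies moments against tangential fields of degree $k-3$. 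This degree mismatch cannot be absorbed into the edge and vertex boundary terms. Similarly, your claim that \eqref{DOF:Lam5}--\eqref{DOF:Lam6} ``exhaust the $P_{k-3}$ moments of $\bm{u}\times\mn_{f}$ by a rowwise surface Helmholtz decomposition'' is false as stated: the $\ccurl_{f}$ part of the test space in \eqref{DOF:Lam6} is only the face-bubble space $(\lambda_{f,1}\lambda_{f,2}\lambda_{f,3})^{2}P_{k-4}(f;\mathbb{R}^{3})$, nowhere near a complement of $\nabla_{f}P_{k-3}(f;\mathbb{R}^{3})$ among the relevant tangential fields.

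Both gaps are closed---and this is exactly what the paper does---by proving \emph{pointwise} vanishing of the traces rather than moment vanishing. From \eqref{DOF:Lam1}--\eqref{DOF:Lam4} one gets $\bm{u}\equiv0$ and $\ddiv^{*}\bm{u}\equiv0$ on all edges by one-dimensional unisolvence (note $\ddiv^{*}\bm{u}(a)$ is already determined by $\nabla\bm{u}(a)$). Hence $(\ddiv^{*}\bm{u})\times\mn_{f}$, a polynomial of degree $k$ on $f$, vanishes on $\partial f$ and factors as $\lambda_{f,1}\lambda_{f,2}\lambda_{f,3}$ times a tangential field of degree $k-3$; testing \eqref{DOF:Lam7} against that very field yields $(\ddiv^{*}\bm{u})\times\mn_{f}\equiv0$ on $f$, so $\ddiv\bsi\cdot\mn_{f}=\tfrac12\rrot_{f}(\ddiv^{*}\bm{u})\equiv0$ and \eqref{DOF:Sig4} holds for free. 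For $\bm{u}\times\mn_{f}$, the paper uses \eqref{re:1} together with the vanishing edge data (here \eqref{DOF:Lam4} is consumed a second time, not merely as \eqref{DOF:Sig2}) to see that $\rrot_{f}$ of each row of $\bm{u}$ vanishes on $\partial f$, and then invokes the unisolvence of the two-dimensional $H^{1}$-type element of \cite[(2.16)--(2.20)]{2020arXiv201002638H} with the moments \eqref{DOF:Lam5}--\eqref{DOF:Lam6} to conclude $\bm{u}\times\mn_{f}\equiv0$ on $f$. This pointwise statement is also what your third paragraph silently consumes: without it, ``the face data force $\bm{v}|_{f}=0$ and $\ddiv\bm{v}|_{f}=0$'' is an assertion, not an argument. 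With it, the paper obtains $\bm{v}=\lambda_{1}\lambda_{2}\lambda_{3}\lambda_{4}\bm{w}$ by the argument of \cite[Theorem 12]{2021arXiv210300088H}, deduces $\nabla_{f}(\ddiv\bm{v})=0$ on each face from $(\ddiv^{*}\bm{u})\times\mn_{f}\equiv0$, and kills the remaining constant because $\nabla\bm{v}$ vanishes at the vertices of the bubble; your final cancellation $(\bm{u},\operatorname{dev}\ggrad\bm{v})_{K}=\|\bm{u}\|_{0,K}^{2}$ then coincides with the paper's conclusion.
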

\begin{remark}
The DOFs \eqref{DOF:Lam1}--\eqref{DOF:Lam2} and  \eqref{DOF:Lam4}--\eqref{DOF:Lam6} lead to the continuity of $\bm{u}\times\mn_{f}$ across each interface $f$. The DOFs \eqref{DOF:Lam1}, \eqref{DOF:Lam3}, and \eqref{DOF:Lam7} ensure the continuity of $(\ddiv^{\ast}\bm{u})\times\mn_{f}$ across each interface $f$.
\end{remark}
\begin{proof}
      The number of DOFs given in \eqref{DOF:Lam} is
      \ben
      \begin{split}
      &32+96+36+48(k-2)+18(k-3)+30(k-1)+6(k-1)(k-2)-12\\
      &+6(k-2)(k-3)+4(k-1)(k-2)+\frac{2k^3-3k^2-5k-12}{6}+\frac{k^3-3k^2-4k+12}{2}\\
      &+\frac{(k+1)k(k-1)}{2}-2k(k-1)=\frac{4(k+4)(k+3)(k+2)}{3}=\operatorname{dim}P_{k+1}(K;\mathbb{T}).
      \end{split}
      \een 
      It suffices to prove that for any $\bm{u}\in P_{k+1}(K;\mathbb{T})$, if $\bm{u}$ vanishes on all the DOFs of \eqref{DOF:Lam}, then $\bm{u}=0$.
The DOFs \eqref{DOF:Lam1}--\eqref{DOF:Lam4} show that $\bm{u}(a),\nabla\bm{u}(a),\nabla(\ddiv^{*}\bm{u})$ vanish at all vertices, and $\bm{u}$, $\ddiv^{*}\bm{u}$, $(\ssym\ccurl\bm{u})\bm{n}_{e}^{\pm}$ vanish on all edges. This and \eqref{re:1} show that for any face $f$, $(\ccurl\bm{u})\cdot\bm{n}_{f} = \operatorname{rot}_{f}\bm{u}$ vanishes on $\partial f$. This, \eqref{DOF:Lam5},\eqref{DOF:Lam6} plus the DOFs of the two dimensional $H^1$-conforming finite element in \cite[(2.16)--(2.20)]{2020arXiv201002638H} show that $\bm{u}\times \bm{n}_{f}$ vanishes on $f\in \mathscr{F}(K)$. The DOFs \eqref{DOF:Lam7} show that $(\ddiv^{*}\bm{u})\times \bm{n}_{f}$ also vanishes on $f\in \mathscr{F}(K)$. 
	
	Since $\bm{\sigma} = \ssym\ccurl\bm{u}$, Lemma \ref{sigma-u} shows that $\bm{\sigma}$ vanishes at the DOFs \eqref{DOF:Sig1}--\eqref{DOF:Sig3} and $\ddiv\ddiv\bm{\sigma} = 0$ on $K$. This, \eqref{DOF:Lam8} and \eqref{DOF:Lam9} imply that $\ssym\ccurl\bm{u} =0$ on $K$. Similar arguments as those in \cite[Theorem 12]{2021arXiv210300088H} show that there exists $\bm{v}\in P_{k+2}(K;\mathbb{R}^3)$ such that
	\begin{equation*}
		\bm{u} = \operatorname{dev}\ggrad\bm{v},
	\end{equation*}
	here $\bm{v} = \lambda_1\lambda_2\lambda_3\lambda_4\bm{w}$ with $\bm{w}\in P_{k-2}(K;\mathbb{R}^3)$. A direct calculation shows
	\begin{equation*}
		\ddiv^{*}\bm{u}\times \bm{n}_{f}= \ddiv^{*}(\ggrad \bm{v}-\frac{1}{3}(\ddiv\bm{v})\bm{I})\times\bm{n}_{f}= \frac{2}{3}\ggrad(\ddiv\bm{v})\times\bm{n}_{f}.
	\end{equation*}
	Since $\ddiv^{*}\bm{u}\times \bm{n}_{f}$ vanishes in each interface $f\in \mathscr{F}(K)$, the tangential derivative of $\ddiv\bm{v}$ also vanishes on the faces. Then $\ddiv\bm{v}=0 $ on $f\in \mathscr{F}(K)$, and \eqref{DOF:Lam10} shows that $\bm{u}=0$. This concludes the proof.
\end{proof}

In three dimensions, the corresponding global space for the $H({\rm{sym}\, \bc},\Omega;\mathbb{T})$ conforming finite element $\mathcal{U}_{k+1, \triangle}$ is defined by

\ben
\begin{split}
	\mathcal{U}_{k+1, \triangle}:=&\{\bm{u}\in H({\rm{sym}\bc},\Omega;\mathbb{T}):\bm{u}|_{K}\in P_{k+1}(K;\mathbb{T}) ~\text{for all}~~K\in\mathcal{T}_{\triangle},\\
	&\text{all of the degrees of freedom \eqref{DOF:Lam} are single-valued}\}.
\end{split}
\een

%%%%%%%%%%%%%%%%%%%%%%%%%
\subsection{$H^1$-conforming finite element space}
This subsection constructs the $H^1(\Omega;\mathbb{R}^{3})$ conforming finite element space $V_{k+2,\triangle}$. This space consists of vectorial, globally continuous piecewise polynomials of degree not greater than $k+2$ with $k \geq 4$. Recall  $$H^1(\ddiv,\Omega;\mathbb{R}^3):=\{\bm{v}\in H^1(\Omega;\mathbb{R}^3): \ddiv \bm{v}\in H^1(\Omega;\mathbb{R})\}$$
defined in \eqref{H1regularity}.
Motivated by Remark \ref{re:3}, the global regularity $V_{k+2,\triangle}\subseteq H^1(\ddiv,\Omega;\mathbb{R}^3)$ is imposed to satisfy the inclusion condition $\operatorname{dev}\, \operatorname{grad} V_{k+2,\triangle}\subseteq \mathcal{U}_{k+1,\triangle}$. The DOFs of $V_{k+2,\triangle}$ are stated in the following theorem. 
\begin{theorem}
	\label{uniso-H1}
	Given tetrahedron $K\in \mathcal{T}_{\triangle}$, the vector valued polynomial $\bm{v}\in P_{k+2}(K;\mathbb{R}^3)$ with $k\geq 4$ can be uniquely determined by the following conditions:
	\begin{subequations}\label{DOF:V}
		\begin{align}
			\bm{v}(a),\nabla\bm{v}(a),\nabla^2\bm{v}(a),\nabla^2(\ddiv\bm{v})(a)\quad &\text{for all}~a\in \mathscr{V}(K), \label{DOF:V1} \\
			(\bm{v},\bm{q})_{e}\quad &\text{for all}~ \bm{q}\in P_{k-4}(e;\mathbb{R}^3),~e\in \mathscr{E}(K), \label{DOF:V2}\\
			(\ddiv\bm{v},q)_{e}\quad &\text{for all}~ q\in P_{k-5}(e;\mathbb{R}),~e\in \mathscr{E}(K),\label{DOF:V3}\\
			(\frac{\partial (\bm{v}\cdot\bm{n}_{e}^{+})}{\partial \bm{n}_{e}^{\pm}},q)_{e},(\frac{\partial (\bm{v}\cdot\bm{t}_{e})}{\partial \bm{n}_{e}^{\pm}},q)_{e}, (\frac{\partial (\bm{v}\cdot\bm{n}_{e}^{-})}{\partial \bm{n}_{e}^{+}}, q)_{e}\quad &\text{for all}~q\in  P_{k-3}(e;\mathbb{R}),~e\in\mathscr{E}(K), \label{DOF:V4}\\
			(\frac{\partial(\ddiv\bm{v})}{\partial \bm{n}_{e}^{\pm}},q)_{e} \quad &\text{for all}~q\in P_{k-4}(e;\mathbb{R}),~e\in\mathscr{E}(K), \label{DOF:V5}\\
			(\bm{v}, \bm{q})_{f} \quad&\text{for all}~\bm{q}\in P_{k-4}(f;\mathbb{R}^3),~f\in\mathscr{F}(K), \label{DOF:V6}\\
			(\ddiv\bm{v}, q)_{f} \quad&\text{for all}~q\in P_{k-5}(f;\mathbb{R}),~f\in\mathscr{F}(K),\label{DOF:V7}\\
			(\bm{v}, \bm{q})_{K} \quad&\text{for all}~\bm{q}\in \mathring{P}^{\ddiv}_{k+2}(K;\mathbb{R}^{3}).\label{DOF:V8}
		\end{align}	
	\end{subequations}
	Here $\bm{n}_{e}^{\pm}$ are two linearly independent normal vectors of the edge $e$, $\bm{t}_{e}$ is the unit tangential vector of edge $e$, and $\mathring{P}^{\ddiv}_{k+2}(K;\mathbb{R}^{3})$ is defined in \eqref{pdiv} above. 
 %In the case $k=4$, the sets of degrees of freedom listed in \eqref{DOF:V3} and \eqref{DOF:V7} are omitted.
\end{theorem}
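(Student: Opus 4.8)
The plan is to follow the standard two-step route to unisolvence: first verify that the number of conditions in \eqref{DOF:V} equals $\dim P_{k+2}(K;\mathbb{R}^3)=\tfrac{1}{2}(k+3)(k+4)(k+5)$, which is a direct if tedious count; then show that a field $\bm{v}\in P_{k+2}(K;\mathbb{R}^3)$ annihilated by all the functionals in \eqref{DOF:V} must vanish. Everything will reduce to proving the two claims that $\bm{v}|_f=0$ and $(\ddiv\bm{v})|_f=0$ for every face $f\in\mathscr{F}(K)$, after which $\bm{v}=\lambda_1\lambda_2\lambda_3\lambda_4\bm{w}$ with $\bm{w}\cdot\bm{n}_f|_f=0$ on each face, i.e. $\bm{v}\in\mathring{P}^{\ddiv}_{k+2}(K;\mathbb{R}^3)$ by the characterization following \eqref{pdiv}, and then testing \eqref{DOF:V8} against $\bm{q}=\bm{v}$ forces $\bm{v}=0$.

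First I would handle the edges. On each $e\in\mathscr{E}(K)$ the trace $\bm{v}|_e$ is a polynomial of degree $k+2$; the Hermite data $\bm{v}(a),\nabla\bm{v}(a),\nabla^2\bm{v}(a)$ at the two endpoints pin down its value and first two tangential derivatives there, and together with the moments \eqref{DOF:V2} against $P_{k-4}(e;\mathbb{R}^3)$ this is exactly $\dim P_{k+2}(e;\mathbb{R}^3)$ conditions, forcing $\bm{v}|_e=0$. The analogous count using the scalar Hermite data for $\ddiv\bm{v}$ extracted from \eqref{DOF:V1} (note $\ddiv\bm{v}(a)$, $\nabla(\ddiv\bm{v})(a)$ come from $\nabla\bm{v}(a),\nabla^2\bm{v}(a)$, while $\nabla^2(\ddiv\bm{v})(a)$ is prescribed separately) and the moments \eqref{DOF:V3} gives $(\ddiv\bm{v})|_e=0$.

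The crux is to upgrade this to the vanishing of the full gradient $\nabla\bm{v}|_e=0$, and here the interplay between the $\bm{v}$- and $\ddiv\bm{v}$-degrees of freedom is essential. Writing $\bm{v}$ in the local frame $\{\bm{t}_e,\bm{n}_e^+,\bm{n}_e^-\}$, the tangential derivatives of all components already vanish since $\bm{v}|_e=0$, so only the six normal derivatives $\partial_{\bm{n}_e^\pm}(\bm{v}\cdot\{\bm{t}_e,\bm{n}_e^+,\bm{n}_e^-\})$ remain. Exactly five of these are controlled directly by \eqref{DOF:V4}, each being a degree $k+1$ polynomial on $e$ whose endpoint value and tangential derivative vanish by \eqref{DOF:V1} and whose $P_{k-3}(e)$ moments vanish, matching $\dim P_{k+1}(e)$; the sixth, $\partial_{\bm{n}_e^-}(\bm{v}\cdot\bm{n}_e^-)$, is recovered from the trace identity $\ddiv\bm{v}=\partial_{\bm{t}_e}(\bm{v}\cdot\bm{t}_e)+\partial_{\bm{n}_e^+}(\bm{v}\cdot\bm{n}_e^+)+\partial_{\bm{n}_e^-}(\bm{v}\cdot\bm{n}_e^-)$ once $(\ddiv\bm{v})|_e=0$ and the other five derivatives are known to vanish. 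This is precisely why \eqref{DOF:V4} lists five and not six edge functionals. A parallel argument, using \eqref{DOF:V1} and \eqref{DOF:V5}, yields $\nabla(\ddiv\bm{v})|_e=0$, so $\ddiv\bm{v}$ also vanishes to second order along every edge. I expect this normal-derivative bookkeeping to be the main obstacle, both in getting the frame computation right and in verifying that the omitted functional is exactly the redundant one.

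It then remains to pass from edges to faces. Since $\bm{v}$ and $\nabla\bm{v}$ vanish on every edge of a face $f$, the trace $\bm{v}|_f$ is divisible by $(\lambda_{f,1}\lambda_{f,2}\lambda_{f,3})^2$, hence equals $(\lambda_{f,1}\lambda_{f,2}\lambda_{f,3})^2\bm{s}$ with $\bm{s}\in P_{k-4}(f;\mathbb{R}^3)$; testing \eqref{DOF:V6} against $\bm{q}=\bm{s}$ and using positivity of the weight in the interior of $f$ gives $\bm{s}=0$, i.e. $\bm{v}|_f=0$. The identical reasoning with $\ddiv\bm{v}$, its second-order edge vanishing, and the face moments \eqref{DOF:V7} against $P_{k-5}(f)$ gives $(\ddiv\bm{v})|_f=0$ (vacuous but automatic for small $k$, since then $\deg(\ddiv\bm{v}|_f)<6$). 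With both claims in hand, $\bm{v}$ vanishes on $\partial K$, so $\bm{v}=\lambda_1\lambda_2\lambda_3\lambda_4\bm{w}$ with $\bm{w}\in P_{k-2}(K;\mathbb{R}^3)$; evaluating $\ddiv\bm{v}$ on $f_i$ shows $(\nabla\lambda_i\cdot\bm{w})|_{f_i}=0$, that is $\bm{w}\cdot\bm{n}_{f_i}|_{f_i}=0$, placing $\bm{v}$ in $\mathring{P}^{\ddiv}_{k+2}(K;\mathbb{R}^3)$. Finally \eqref{DOF:V8} with $\bm{q}=\bm{v}$ gives $\Vert\bm{v}\Vert_{0,K}^2=0$, completing the proof.
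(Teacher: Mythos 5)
Your proposal is correct and takes essentially the same route as the paper's proof: vertex data, then edge vanishing of $\bm{v}$ and $\ddiv\bm{v}$, then recovery of the one missing normal derivative $\partial(\bm{v}\cdot\bm{n}_e^{-})/\partial\bm{n}_e^{-}$ from the trace identity $\ddiv\bm{v}=\partial_{\bm{t}_e}(\bm{v}\cdot\bm{t}_e)+\partial_{\bm{n}_e^{+}}(\bm{v}\cdot\bm{n}_e^{+})+\partial_{\bm{n}_e^{-}}(\bm{v}\cdot\bm{n}_e^{-})$, then face vanishing and the interior test \eqref{DOF:V8}. Your write-up merely makes explicit some steps the paper compresses (the edge-by-edge dimension matching, the factorization $\bm{v}|_f=(\lambda_{f,1}\lambda_{f,2}\lambda_{f,3})^2\bm{s}$, and the verification that $\bm{v}\in\mathring{\mathcal{P}}^{\ddiv}_{k+2}(K;\mathbb{R}^3)$), all of which check out.
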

\begin{remark}
The DOFs \eqref{DOF:V1}--\eqref{DOF:V2}, \eqref{DOF:V4}, and \eqref{DOF:V6} are imposed to ensure the continuity of $\bm{v}$ on each interface. The DOFs \eqref{DOF:V1}, \eqref{DOF:V3}--\eqref{DOF:V5}, and \eqref{DOF:V7} are imposed to ensure the continuity of $\ddiv\bm{v}$ on each interface.
\end{remark}
\begin{proof}
Since there are $(k+5)(k+4)(k+3)/2$ degrees of freedom defined in \eqref{DOF:V}, it suffices to prove that for any $\bm{v}\in P_{k+2}(T;\mathbb{R}^3)$, if $\bm{v}$ vanishes at all degrees of freedom in \eqref{DOF:V}, then $\bm{v}$ is identically zero.
	
	It is easy to see that $\bm{v},\nabla\bm{v},\nabla^2\bm{v},\nabla^2(\ddiv\bm{v})$ vanish at all vertices. Then the DOFs \eqref{DOF:V2},\eqref{DOF:V3},\eqref{DOF:V5} imply that $\bm{v},\ddiv\bm{v},\nabla(\ddiv\bm{v})$ vanish on all edges. Since $\ddiv\bm{v}=\frac{\partial(\bm{v}\cdot\bm{n}_{e}^{+})}{\partial\bm{n}_{e}^{+}}+\frac{\partial(\bm{v}\cdot\bm{n}_{e}^{-})}{\partial\bm{n}_{e}^{-}}+\frac{\partial(\bm{v}\cdot\bm{t}_{e})}{\partial\bm{t}_{e}}$, the DOFs \eqref{DOF:V4} show that $\nabla\bm{v}$ vanish on all edges. It then follows from \eqref{DOF:V6} and \eqref{DOF:V7} that $\bm{v},\ddiv\bm{v}$ vanish on all faces. Thus \eqref{DOF:V8} concludes the proof.
\end{proof}

In three dimensions, the corresponding global space for the $H^{1}(\Omega;\mb^{3})$ conforming finite element $V_{k+2, \triangle}$ is defined by
\ben
\begin{split}
	V_{k+2, \triangle}:=&\{\bm{v}\in H^{1}( \Omega;\mb^{3}):\bm{v}|_{K}\in P_{k+2}(K;\mb^{3})~\text{for all}~~K\in\mathcal{T}_{\triangle},\\
	&\text{all of the degrees of freedom \eqref{DOF:V} are single-valued}\}.
\end{split}
\een
%Actually, it is easy to verify $V_{k+2,\triangle}$ belongs to $H^1(\ddiv,\Omega;\mathbb{R}^3):=\{u\in H^1(\Omega;\mathbb{R}^3)|\ddiv u\in H^1(\Omega;\mathbb{R})\}$. 

\section{The discrete divdiv complex}
The finite element analogy of the divdiv complex \eqref{divdivComp} is presented in this section, and the discrete complexes consist of those conforming finite elements introduced in the previous two sections. The exactness of the finite element divdiv complexes on cuboid and tetrahedral grids is proved respectively.   
\subsection{The finite element divdiv complex on cuboid grids}
This subsection studies the finite element divdiv complex of \eqref{eq:DisComp} on cuboid grids $\mathcal{T}_{\Box}$. Remark \ref{re:symcurl:inclu} and Remark \ref{re:devgrad:inclu} show $\operatorname{dev} \operatorname{grad}\, V_{k,\Box} \subseteq \mathcal{U}_{k,\Box}$ and $\operatorname{sym}\bc\mathcal{U}_{k,\Box}\subseteq \Sigma_{k,\Box}$.
\begin{lemma}
For any $\bm{u}\in \mathcal{U}_{k,\Box}$ with $\operatorname{sym}\bc\bm{u}=0$, there exists some $ {\bm{v}}\in {V}_{k,\Box}$ such that $\bm{u}=\operatorname{dev}\operatorname{grad} \bm{v}$. 
\end{lemma}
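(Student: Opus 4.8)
The plan is to construct the potential by exactness at the continuous level and then show that it actually lands in the finite element space $V_{k,\Box}$. First I would invoke the exactness of the continuous divdiv complex \eqref{divdivComp}: since $\bm{u}\in\mathcal{U}_{k,\Box}\subseteq H(\ssym\ccurl,\Omega;\mathbb{T})$ satisfies $\ssym\ccurl\bm{u}=0$, there is $\bm{v}\in H^1(\Omega;\mb^{3})$, unique modulo $RT$, with $\operatorname{dev}\operatorname{grad}\bm{v}=\bm{u}$. It then remains to verify $\bm{v}\in V_{k,\Box}$, and by the definition of the global space this amounts to two things: (i) $\bm{v}|_K\in V_{[k]}(K;\mb^{3})$ for every $K\in\mathcal{T}_{\Box}$, and (ii) $\bm{v}\in H^1(\ddiv,\Omega;\mb^{3})$ in the sense of \eqref{H1regularity}.

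For (i) I would argue elementwise. On each cube $K$, $\bm{u}|_K\in\mathcal{U}_{[k]}(K;\mathbb{T})$ is a polynomial annihilated by $\ssym\ccurl$, so the exact polynomial divdiv complex of Lemma \ref{complex:poly3d:3} furnishes $\bm{p}_K\in V_{[k]}(K;\mb^{3})$ with $\operatorname{dev}\operatorname{grad}\bm{p}_K=\bm{u}|_K$. Then $\operatorname{dev}\operatorname{grad}(\bm{v}|_K-\bm{p}_K)=0$, and since the kernel of $\operatorname{dev}\operatorname{grad}$ on a connected domain is exactly $RT\subseteq V_{[k]}(K;\mb^{3})$, one gets $\bm{v}|_K\in V_{[k]}(K;\mb^{3})$. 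In particular $\ddiv\bm{v}|_K\in Q_{k-1}(K;\mathbb{R})$ is elementwise smooth.

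For (ii), continuity of $\bm{v}$ itself across interfaces is automatic from $\bm{v}\in H^1(\Omega;\mb^{3})$, so the only substantive point, and the main obstacle, is the continuity of $\ddiv\bm{v}$ across interfaces. Here I would exploit the enhanced regularity built into $\mathcal{U}_{k,\Box}$: by \eqref{def:symcurlast} and Remark \ref{u:continuity:condition}, $\bm{u}\in H(\ccurl,\Omega;\mathbb{T})$, so the tangential trace $\bm{u}\times\bm{n}_{f}$ is single-valued across each interface $f$. Writing $\bm{u}=\nabla\bm{v}-\tfrac{1}{3}(\ddiv\bm{v})\bm{I}$ row by row gives
\begin{equation*}
\bm{u}\times\bm{n}_{f}=(\nabla\bm{v})\times\bm{n}_{f}-\tfrac{1}{3}(\ddiv\bm{v})\,(\bm{I}\times\bm{n}_{f}).
\end{equation*}
For $\bm{v}\in H^1$ the trace of each component on $f$ is single-valued, hence so is its surface gradient; therefore $(\nabla\bm{v})\times\bm{n}_{f}$, which only sees the tangential part of the gradient, is continuous across $f$. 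Consequently $(\ddiv\bm{v})\,(\bm{I}\times\bm{n}_{f})$ is continuous, and since $\bm{I}\times\bm{n}_{f}\neq 0$ this forces $[\ddiv\bm{v}]_f=0$. Combined with the elementwise smoothness from (i), this yields $\ddiv\bm{v}\in H^1(\Omega;\mathbb{R})$, i.e. $\bm{v}\in H^1(\ddiv,\Omega;\mb^{3})$.

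Finally I would assemble (i) and (ii): $\bm{v}$ is a piecewise $V_{[k]}$ field lying in $H^1(\ddiv,\Omega;\mb^{3})$, so the degrees of freedom \eqref{3dh1Dof} are single-valued and $\bm{v}\in V_{k,\Box}$, with $\operatorname{dev}\operatorname{grad}\bm{v}=\bm{u}$ as required. Note that the choice of the $RT$-representative of $\bm{v}$ is immaterial, since adding an element of $RT$ preserves membership in $H^1$, the elementwise membership in $V_{[k]}$, and the continuity of $\ddiv\bm{v}$ (the divergence of an $RT$ field being constant). The delicate step is the third paragraph's identity, where the deviatoric structure converts the $H(\ccurl)$-conformity of $\bm{u}$ into the continuity of $\ddiv\bm{v}$; everything else is bookkeeping built on the two exactness results already established.
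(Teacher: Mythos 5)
Your proof is correct, and while it shares the paper's scaffolding, the treatment of the key interface step is genuinely different. Both arguments obtain the potential $\bm{v}\in H^1(\Omega;\mb^{3})$ from the exactness of \eqref{divdivComp} and then use the polynomial complex of Lemma \ref{complex:poly3d:3} elementwise (your subtraction argument with $\ker(\operatorname{dev}\ggrad)=RT\subseteq V_{[k]}(K;\mb^{3})$ is the same mechanism the paper invokes tersely) to get $\bm{v}|_K\in V_{[k]}(K;\mb^{3})$. The divergence point is how one proves the continuity of $\ddiv\bm{v}$: the paper does not exploit the global $H^1$-continuity of $\bm{v}$ as directly as you do, but instead derives from the trace identities $\Pi_f(\bm{u}^{\intercal}\mn_f)=\nabla_f(\bm{v}\cdot\mn_f)$ and $\mathcal{Q}_{f,\ssym}(\bm{u}\times\mn_f)=\varepsilon_f(\bm{v}\times\mn_f)$ that $[\bm{v}\cdot\mn_f]_f$ is constant and $[\bm{v}\times\mn_f]_f$ is a surface rigid motion, then uses $[u_{ii}]_f=0$ (note $u_{ii}=\tfrac{2}{3}\partial_i v_i-\tfrac{1}{3}(\partial_j v_j+\partial_l v_l)$) to kill the rotational parameter and conclude \eqref{tem:v:i}, finally gauge-fixing with $RT$ to remove the constant jumps. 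You instead take $[\bm{v}]_f=0$ for free from $H^1$ membership plus the piecewise polynomial structure, and extract $[\ddiv\bm{v}]_f=0$ from the single-valuedness of the full tangential trace $\bm{u}\times\bm{n}_f$ through the deviatoric term $-\tfrac{1}{3}(\ddiv\bm{v})(\bm{I}\times\bm{n}_f)$, the matrix $\bm{I}\times\bm{n}_f$ being constant and nonzero; your observation that $(\nabla\bm{v})\times\bm{n}_f$ sees only tangential derivatives of the single-valued trace is exactly right. Your route is shorter and avoids the rigid-motion bookkeeping, which is arguably redundant in the paper's own write-up given that the potential is already globally $H^1$; the paper's route uses less trace information (in effect only the continuity of the diagonal entries $u_{ii}$) and would survive even if the potential were known only elementwise up to a local $RT$ field. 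Two minor points of hygiene: the $H(\ccurl,\Omega;\mathbb{T})$-conformity of $\mathcal{U}_{k,\Box}$ that you invoke is properly justified by the jump conditions the DOFs \eqref{3dsymcurlDof} enforce (see Remark \ref{re:symcurl:inclu}), Remark \ref{u:continuity:condition} being motivational rather than a proof; and in your final assembly the single-valuedness of the vertex and edge DOFs for $\partial_i v_i$ in \eqref{3dh1Dof} follows from its vanishing face jumps by the face-connectedness of cuboid grids applied to piecewise polynomials --- the same implicit reduction the paper makes when it declares it suffices to check continuity of $\bm{v}$ and $\partial_i v_i$ across interfaces.
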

\begin{proof}
For any $\bm{u}\in \mathcal{U}_{k,\Box}$ and $\operatorname{sym}\bc\bm{u}=0$, there exists some $\bm{v}\in H^1(\Omega;\mathbb{R}^3)$, such that $\bm{u}=\operatorname{dev}\operatorname{grad} \bm{v}$. The exactness of the polynomial complex in Lemma \ref{complex:poly3d:3} shows $ {\bm{v}}|_K\in {V}_{[k]}( {K};\mb^{3})$ on each cube $K\in \mathcal{T}_{\Box}$. To prove $\bm{v}\in V_{k,\Box}$, it suffices to verify the continuity of $\bm{v}$ as well as $\partial_i v_i$ across each interface $f\in \mathscr{F}$.
The identities
\ben
\Pi_{ {f}}( \bm{u}^{\intercal} {\mn_{f}})= {\nabla}_{ {f}}( {\bm{v}}\cdot {\mn_{f}}),~~~~~~
\mathcal{Q}_{ {f},{\rm{sym}}}( \bm{u}\times  {\mn_{f}})= {\varepsilon}_{ {f}}( {\bm{v}}\times  {\mn_{f}}) \quad\text{for all }~~ {f}\in \mathscr{F}
\een
lead to 
\ben
[{\bm{v}}\cdot {\mn_{f}}]_{ {f}}\in P_{0}( {f};\mathbb{R}), ~~~~~~ [{\bm{v}}\times  {\mn_{f}}]_{ {f}}\in RM(f)\quad\text{for all }~~ {f}\in \mathscr{F}.
\een
This implies
\ben
[{v}_{i}]_{ {f}}=c_{0}, ~~~~ [{v}_{j}]_{ {f}}=c_{2}-c_{1} {x}_{j}, ~~~~ [{v}_{l}]_{ {f}}=c_{3}+c_{1} {x}_{l} \quad \text{for all}~ f\in \mathscr{F}_i, ~~i=1, 2, 3
\een
with parameters $c_{0}$, $c_{1}$, $c_{2}$, $c_{3}$. Recall $ {u}_{ii}= {\partial}_{i} {v}_{i}-\frac{1}{3} {\rm{div}} {\bm{v}}$, 
and $ [{u}_{ii}]_{ {f}}=0$ for all $ {f}\in \mathscr{F}$. 
Thus 
\be
\label{tem:v:i}
[{\partial}_{i} {v}_{i}]_{ {f}}= [{\partial}_{j} {v}_{j}]_{ {f}}= [{\partial}_{l} {v}_{l}]_{ {f}}=0.
\ee
This shows that $c_{1}$ is zero. Therefore $ [{\bm{v}}]_{ {f}}\in P_{0}( {f};\mathbb{R})$ for all $ {f}\in \mathscr{F}$. As $RT=\rk(\rm{dev}\ggrad)$, the constraint $ {\bm{v}}(x_b,y_b,z_b)=0$ can be imposed. Hence $ [{\bm{v}}]_{ {f}}=0$ for all $ {f}\in \mathscr{F}$. This plus \eqref{tem:v:i} prove $ {\bm{v}}\in {V}_{k,\Box}$.
\end{proof}
%%%%%%%%%%%%%%%%%%%%%%%%%%%%%%%%%%%%%%%%%%%%%%%%%%%%%%%%

\begin{theorem}
	\label{global-complex}
	The finite element sequence 
		\ben
	~~~~~~~~~RT\stackrel{\subseteq}{\longrightarrow}{V}_{k,\Box}\xlongrightarrow[~]{{{\rm{dev}\,  {\ggrad}}}} \mathcal{U}_{k,\Box}\xlongrightarrow[~]{{{\rm{sym}}\,  {\bc}}}{\Sigma}_{k,\Box}\xlongrightarrow[~]{{ {\rm{div}}\,  {\bd}}}{Q}_{k-2,\Box}{\longrightarrow} 0
	\een
is a divdiv complex, which is exact on a contractible domain.
\end{theorem}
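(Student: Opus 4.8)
The plan is to verify the complex property first, then exactness at the two interior nodes, and finally to treat the right-hand end by a dimension count. That the sequence is a complex is already in hand: Remarks~\ref{re:devgrad:inclu} and~\ref{re:symcurl:inclu} supply the inclusions $\operatorname{dev}\ggrad V_{k,\Box}\subseteq\mathcal{U}_{k,\Box}$ and $\ssym\ccurl\,\mathcal{U}_{k,\Box}\subseteq\Sigma_{k,\Box}$, and the pointwise identities $\ssym\ccurl(\operatorname{dev}\ggrad)=0$ and $\ddiv\ddiv(\ssym\ccurl)=0$---which hold on the smooth complex \eqref{divdivComp}, hence elementwise---close the compositions and show $\ddiv\ddiv\,\Sigma_{k,\Box}\subseteq Q_{k-2,\Box}$. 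Exactness at $V_{k,\Box}$ is immediate, since $\rk(\operatorname{dev}\ggrad)\cap H^1(\Omega;\mb^3)=RT\subseteq V_{k,\Box}$, and exactness at $\mathcal{U}_{k,\Box}$ is exactly the lemma proved immediately above.

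Next I would reduce exactness at $\Sigma_{k,\Box}$ to the surjectivity of $\ddiv\ddiv$ by a rank--nullity argument. The two exactness statements already obtained give $\dim(\ssym\ccurl\,\mathcal{U}_{k,\Box})=\dim\mathcal{U}_{k,\Box}-\dim V_{k,\Box}+4$, while surjectivity of $\ddiv\ddiv$ gives $\dim(\rk(\ddiv\ddiv)\cap\Sigma_{k,\Box})=\dim\Sigma_{k,\Box}-\dim Q_{k-2,\Box}$. Because $\ssym\ccurl\,\mathcal{U}_{k,\Box}\subseteq\rk(\ddiv\ddiv)\cap\Sigma_{k,\Box}$ by the complex property, equality---and hence exactness at $\Sigma_{k,\Box}$---follows once the global dimension identity
\begin{equation*}
\dim V_{k,\Box}-\dim\mathcal{U}_{k,\Box}+\dim\Sigma_{k,\Box}-\dim Q_{k-2,\Box}=\dim RT=4
\end{equation*}
is checked. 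I would verify it by counting the single-valued global degrees of freedom \eqref{3dh1Dof}, \eqref{3dsymcurlDof}, and \eqref{divdivDof3} vertex-, edge-, face-, and interior-wise over $\mathcal{T}_{\Box}$, using the local unisolvence dimensions from Theorem~\ref{unisolvence} and its companions, and comparing with $\dim Q_{k-2,\Box}=(k-1)^3\,\#\mathcal{T}_{\Box}$.

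For the surjectivity of $\ddiv\ddiv$ I would argue locally and then patch. Lemma~\ref{complex:poly3d:3} yields $\ddiv\ddiv\,\Sigma_{[k]}(K;\ms)=Q_{k-2}(K)$ on each cube; restricting to the interior bubbles $\mathring{\Sigma}_{[k]}(K;\ms)$, whose boundary degrees of freedom all vanish so that they extend by zero to members of $\Sigma_{k,\Box}$, a local count shows that $\ddiv\ddiv$ carries the bubbles onto the mean-value-zero part of $Q_{k-2}(K)$. Since $\int_K\ddiv\ddiv\bsi$ is, by Green's identity, a functional of the boundary traces $\bsi\mn_f$ and $\mn_f^{\intercal}\ddiv\bsi$ alone, the remaining elementwise constants have to be produced through the conforming face data; as no boundary condition is imposed on $\Sigma_{k,\Box}$, one expects these $\#\mathcal{T}_{\Box}$ modes to be attainable.

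The hard part will be precisely this last point: showing that the induced map $\bsi\mapsto\big(\int_K\ddiv\ddiv\bsi\big)_{K\in\mathcal{T}_{\Box}}$ is onto $\mb^{\#\mathcal{T}_{\Box}}$, since the interior bubbles are blind to the elementwise constants and the argument must genuinely exploit the continuity-tied boundary degrees of freedom of $\Sigma_{k,\Box}$. A secondary, purely combinatorial difficulty is the global count itself: the enhanced-smoothness data---the $\partial_i\sigma_{ii}$ face moments of \eqref{divdivDof3:3}, the $\partial_j u_{ji}$ and $\partial_i u_{ij}$ moments enforcing continuity of $(\ddiv^{*}\bm{u})\times\mn_f$ in \eqref{3dsymcurlDof}, and the $\partial_i v_i$ data enforcing continuity of $\ddiv\bm{v}$ in \eqref{3dh1Dof}---must be tallied consistently across shared faces and edges.
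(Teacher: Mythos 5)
Your overall architecture coincides with the paper's: the complex property via Remarks \ref{re:symcurl:inclu} and \ref{re:devgrad:inclu}, exactness at $V_{k,\Box}$ via $\rk(\operatorname{dev}\ggrad)=RT$, exactness at $\mathcal{U}_{k,\Box}$ via the lemma preceding the theorem, and exactness at $\Sigma_{k,\Box}$ by the rank--nullity comparison of $\dim(\ssym\ccurl\,\mathcal{U}_{k,\Box})$ with $\dim(\rk(\rd\bd)\cap\Sigma_{k,\Box})$ using Euler's formula --- all of this is what the paper does. The genuine gap is the surjectivity of $\rd\bd$, which you reduce to a step you yourself flag as unproved, and the local claim on which your patching plan rests is false. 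For a bubble $\mathring{\bta}\in\mathring{\Sigma}_{[k]}(K;\ms)$ \emph{all} boundary contributions in Green's identity vanish: $\mathring{\bta}\mn_f|_f=0$, and on $f\in F_i(K)$ also $\bd\mathring{\bta}\cdot\mn_f|_f=\partial_i\mathring{\tau}_{ii}|_f+\partial_j\mathring{\tau}_{ij}|_f+\partial_l\mathring{\tau}_{il}|_f=0$, since $\partial_i\mathring{\tau}_{ii}|_f=0$ by definition and $\partial_j,\partial_l$ are tangential derivatives of vanishing traces. Hence $(\rd\bd\,\mathring{\bta},q)_K=(\mathring{\bta},\nabla^2 q)_K=0$ for \emph{every} $q\in P_1(K)$, not merely for constants: $\rd\bd\,\mathring{\Sigma}_{[k]}(K;\ms)$ is $L^2$-orthogonal to the four-dimensional space $P_1(K)$, so it cannot equal the mean-value-zero part of $Q_{k-2}(K)$ for any $k\geq 3$. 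Consequently the modes that the inter-element data must supply are the elementwise \emph{linear} ones (four per element, modulo global relations), not just the $\#\mathcal{T}_{\Box}$ constants you identify, and your proposal offers no argument for hitting them.

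The paper closes precisely this hole by duality rather than construction. Assuming $q\in Q_{k-2,\Box}$ is orthogonal to $\rd\bd\,\Sigma_{k,\Box}$, elementwise integration by parts (using single-valuedness of $\bta\mn_f$ and $\mn_f^{\intercal}\bd\bta$) gives $(\bta,\nabla^2 q)_K=0$ for all $\bta\in\Sigma_{k,\Box}$, whence $q|_K\in P_1(K)$ --- exactly the residual your bubbles leave behind. Then, for each face $f_i$, it takes $\bta\in\Sigma_{k,\Box}$ with all DOFs vanishing except the second set in \eqref{divdivDof3:3}, set to $(\partial_i\tau_{ii},[q]_{f_i})_{f_i}=\|[q]_{f_i}\|^2_{0,f_i}$ (legitimate since $[q]_{f_i}\in P_1(f_i)\subseteq Q_{k-2}(f_i)$); for this $\bta$ the traces $\bta\mn_f|_f$ vanish on every face and $\bd\bta\cdot\mn_f|_f=\partial_i\tau_{ii}|_f$ is supported on $f_i$ alone, so Green's identity collapses to $0=(\rd\bd\bta,q)_{\Omega}=\|[q]_{f_i}\|^2_{0,f_i}$. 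Letting $f_i$ range over all faces (with $[\cdot]_f$ the trace on boundary faces) annihilates the piecewise-$P_1$ residual and yields $q=0$. To salvage your constructive route you would have to prove the analogous statement yourself --- that the face DOFs $(\partial_i\tau_{ii},\cdot)_f$ realize arbitrary moments of $\bd\bta\cdot\mn_f$ against piecewise linears while keeping the traces $\bta\mn_f$ zero --- which is precisely the content of the paper's choice of $\bta$. The remainder of your proposal (the alternating-sum dimension identity and the DOF tally) agrees with the paper's count.
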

\begin{proof}
	The inclusion $\rd\bd\Sigma_{k,\Box}\subseteq Q_{k-2,\Box}$ is easy to verify. It remains to prove $Q_{k-2,\Box}\subseteq \rd\bd\Sigma_{k,\Box}$. If the inclusion does not hold, then there exists some $q\in Q_{k-2,\Box}$ and $q\neq 0$, such that 
	\ben
	({\rd}\, {\bd}\bta, q)_{\Omega}=0~~~\text{for all}~~\bta\in \Sigma_{k,\Box}.
	\een
	Integration by parts and the continuity of $\bta\mn_{f}$ as well as $\mn_{f}^{\intercal}\ddiv \bta$ across each interface $f$ leads to $(\bta, \nabla^2 q)_K=0$ for any $\bta\in \Sigma_{k,\Box}$, and thus $q\in P_{1}(K;\mathbb{R})$. Take $\bta\in \Sigma_{k,\Box}$ such that its DOFs vanish except the following DOFs
	\ben
	(\partial_{i}\tau_{ii}, [q]_{f_{i}})_{f_{i}}=([q]_{f_{i}}, [q]_{f_{i}})_{f_{i}}~~~\text{for some}~~f_{i}\in {\mathscr{F}}.
	\een
	Then an integration by parts results in 
	\ben
	0=({\rd}\, {\bd}\bta, q)_{\Omega}=\sum_{f\in {\mathscr{F}}_{h}}(\bd{\bta}\cdot\mn_{f}, [q]_{f})_{f}=\|[q]_{f_{i}}\|^{2}_{0, f_{i}}.
	\een
	This shows $[q]_{f_{i}}=0$. The arbitrariness of the choice of $f_{i}\in \mathscr{F}$ leads to $q=0$. The contradiction occurs.

	Besides,
	\ben
	\begin{split}
		{\rm{dim}}\, {\rm{sym}\bc}\,  \mathcal{U}_{k, \Box}&={\rm{dim}}\, \mathcal{U}_{k,\Box}-{\rm{dim}}\, {\rm{dev}\ggrad}V_{k,\Box}={\rm{dim}}\, \mathcal{U}_{k,\Box}-{\rm{dim}}\, V_{k,\Box}+ {\rm{dim}}\,RT\\
		&=-4\#\mathscr{V}+(k+3)\#\mathscr{E}+(4k^2-10k+2)\#\mathscr{F}\\
		&\quad+(5k^{3}-27k^{2}+42k-16)\# \mathcal{T}_{h}+4,
	\end{split}
	\een
	and
	\ben
	\begin{split}
		{\rm{dim}}\, {\rk(\rd\bd)}\cap \Sigma_{k,\Box}&={\rm{dim}}\, \Sigma_{k,\Box}-{\rm{dim}}\, {\rd}{\bd}\Sigma_{k,\Box}={\rm{dim}}\, \Sigma_{k,\Box}-{\rm{dim}}\, Q_{k-2,\Box}\\
		&=(k-1)\#\mathscr{E}+(4k^2-10k+6)\#\mathscr{F}\\
		&\quad+(5k^{3}-27k^{2}+42k-20)\# \mathcal{T}_{h}.
	\end{split}
	\een
	Here $\# \mathcal{S}$ is the number of the elements in the finite set $S$. According to Euler's formula $\#\mathscr{E}+1=\#\mathscr{V}+\#\mathscr{F}-\#\mathcal{T}_{h}$, 
	\ben
	{\rm{dim}}\, {\rk(\rd\bd)}\cap \Sigma_{k,\Box}={\rm{dim}}\, {\rm{sym}\bc}\,  \mathcal{U}_{k, \Box}.
	\een
This concludes the proof. 
\end{proof}

\subsection{The finite element divdiv complex on tetrahedral grids}
This subsection derives and studies the discrete divdiv complex of \eqref{eq:DisComp} on tetrahedral grids. Since $\operatorname{dev}\ggrad H^1(\ddiv,\Omega;\mathbb{R}^3) \subseteq H^{*}(\ssym\ccurl,\Omega;\mathbb{T})$, it follows from the proof of Theorem \ref{uniso-H1} and Theorem \ref{th3-8} that $\operatorname{dev}\ggrad V_{k+2,\triangle}\subseteq \mathcal{U}_{k+1, \triangle}$ and $\ssym\ccurl\, \mathcal{U}_{k+1, \triangle}\subseteq \Sigma_{k, \triangle}$.

%Besides, $\ddiv\ddiv\Sigma_{k, \triangle}\subseteq Q_{k-2,\triangle}$,
\begin{lemma}
\label{sur:tri:dev}
For any $\bm{u}\in \mathcal{U}_{k+1, \triangle}$ with $\ssym\ccurl\bm{u}=0$, there exists some $\bm{v}\in V_{k+2, \triangle}$ such that $\bm{u} = \operatorname{dev}\ggrad\bm{v}$.
\end{lemma}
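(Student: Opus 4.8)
The plan is to mimic the cuboid argument preceding Theorem~\ref{global-complex}: produce a global potential $\bm v$ from the exactness of the continuous complex, show it is an elementwise polynomial of the right degree, and then verify that it renders every degree of freedom in \eqref{DOF:V} single-valued, so that $\bm v\in V_{k+2,\triangle}$.

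First I would invoke the exactness of \eqref{divdivComp} on the contractible domain $\Omega$. Since $\mathcal{U}_{k+1,\triangle}\subseteq H(\ssym\ccurl,\Omega;\mathbb{T})$ and $\ssym\ccurl\bm u=0$, there is $\bm v\in H^1(\Omega;\mathbb{R}^3)$ with $\bm u=\operatorname{dev}\ggrad\bm v$. The crucial extra regularity I need is $\ddiv\bm v\in H^1(\Omega;\mathbb{R})$, i.e. $\bm v\in H^1(\ddiv,\Omega;\mathbb{R}^3)$. This holds at the continuous level: because $\bm u\in\mathcal{U}_{k+1,\triangle}\subseteq H^{*}(\ssym\ccurl,\Omega;\mathbb{T})\subseteq H(\ccurl,\Omega;\mathbb{T})$, Lemma~\ref{tracelessproperty} gives $\ddiv^{*}\bm u\in L^2(\Omega;\mathbb{R}^3)$; and since $\bm v\in H^1$, the distributional identity $\ggrad(\ddiv\bm v)=\tfrac32\,\ddiv^{*}(\operatorname{dev}\ggrad\bm v)=\tfrac32\,\ddiv^{*}\bm u$ (cf.\ Remark~\ref{re:3}) shows $\ggrad(\ddiv\bm v)\in L^2$, whence $\ddiv\bm v\in H^1$. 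As a piecewise polynomial in $H^1$, both $\bm v$ and $\ddiv\bm v$ are then continuous.

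Next, the local polynomial degree. On each $K\in\mathcal{T}_{\triangle}$ the restriction $\bm u|_K=\operatorname{dev}\ggrad(\bm v|_K)\in P_{k+1}(K;\mathbb{T})$, so arguing exactly as in the proof of Theorem~\ref{th3-8} (local exactness of the polynomial $\operatorname{dev}\ggrad$ map, whose kernel is $RT\subseteq P_1$) a polynomial preimage in $P_{k+2}(K;\mathbb{R}^3)$ exists; since the global $\bm v|_K$ differs from it only by an element of $RT$, indeed $\bm v|_K\in P_{k+2}(K;\mathbb{R}^3)$.

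Finally, I would check that every degree of freedom in \eqref{DOF:V} is single-valued, which together with $\bm v\in H^1$ and the degree just obtained places $\bm v$ in $V_{k+2,\triangle}$. The two identities $\ggrad\bm v=\bm u+\tfrac13(\ddiv\bm v)\bm I$ and $\ggrad(\ddiv\bm v)=\tfrac32\,\ddiv^{*}\bm u$ convert the single-valuedness of $\bm u$'s degrees of freedom \eqref{DOF:Lam} into that of $\bm v$'s. The values of $\bm v$ and $\ddiv\bm v$ and their traces on edges and faces (the DOFs \eqref{DOF:V2},\eqref{DOF:V3},\eqref{DOF:V6},\eqref{DOF:V7} and the zeroth-order part of \eqref{DOF:V1}) are single-valued by $\bm v,\ddiv\bm v\in H^1$. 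At a vertex $a$, $\ggrad\bm v(a)=\bm u(a)+\tfrac13(\ddiv\bm v)(a)\bm I$ is single-valued since $\bm u(a)$ is (by \eqref{DOF:Lam1}) and $(\ddiv\bm v)(a)$ is (continuity); $\nabla^2\bm v(a)$ is single-valued because $\nabla\bm u(a)$ is (by \eqref{DOF:Lam1}) and $\ggrad(\ddiv\bm v)(a)=\tfrac32\,\ddiv^{*}\bm u(a)$ is a fixed contraction of the single-valued $\nabla\bm u(a)$; and $\nabla^2(\ddiv\bm v)(a)=\tfrac32\,\nabla(\ddiv^{*}\bm u)(a)$ is single-valued directly by \eqref{DOF:Lam1}. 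The edge-derivative DOFs \eqref{DOF:V4},\eqref{DOF:V5} are handled identically through $\ggrad\bm v|_e=\bm u|_e+\tfrac13(\ddiv\bm v)|_e\bm I$ and $\ggrad(\ddiv\bm v)|_e=\tfrac32\,\ddiv^{*}\bm u|_e$, where $\bm u|_e$ and $\ddiv^{*}\bm u|_e$ are pinned down single-valuedly by \eqref{DOF:Lam2},\eqref{DOF:Lam3} together with the already-established vertex data; the interior DOF \eqref{DOF:V8} is trivially single-valued. I expect the main obstacle to be precisely this trace bookkeeping at vertices (the enhanced vertex regularity): since $\bm u=\operatorname{dev}\ggrad\bm v$ is traceless it carries no information about the spherical part $\ddiv\bm v$, so the continuity of that part must be supplied separately—by $\ddiv\bm v\in H^1$ at orders zero and one, and by the contraction $\ddiv^{*}\bm u=\tfrac23\ggrad(\ddiv\bm v)$ at orders one and two—and one must confirm that these two sources match exactly what \eqref{DOF:V} demands.
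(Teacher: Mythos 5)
Your proposal is correct and takes essentially the same route as the paper: the paper obtains the piecewise-$P_{k+2}$ potential in one stroke by citing the discrete exactness result \cite[Lemma 3.2]{2020arXiv200712399C} (which your continuous-exactness-plus-local-polynomial-preimage argument, patching by the $RT$ kernel elementwise, simply reconstructs), and then, exactly as you do, uses $\ddiv^{*}\bm{u}=\frac{2}{3}\ggrad(\ddiv\bm{v})$ to conclude $\bm{v}\in H^1(\ddiv,\Omega;\mathbb{R}^3)$ and transfers the vertex and edge continuity of $\bm{u}$, $\nabla\bm{u}$, $\ddiv^{*}\bm{u}$, $\nabla(\ddiv^{*}\bm{u})$ to $\nabla\bm{v}$, $\nabla^2\bm{v}$, $\nabla(\ddiv\bm{v})$, $\nabla^2(\ddiv\bm{v})$. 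Your DOF-by-DOF single-valuedness bookkeeping is just a more explicit rendering of the paper's continuity statements, so the two proofs coincide in substance.
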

\begin{proof}
For any $\bm{u}\in \mathcal{U}_{k+1, \triangle}$ with $\ssym\ccurl\bm{u}=0$, there exists some $\bm{v}\in H^1(\Omega;\mathbb{R}^3)$ consisting of piecewise polynomials of degree no more than $k+2$ such that $\bm{u} = \operatorname{dev}\ggrad\bm{v}$\cite[Lemma 3.2]{2020arXiv200712399C}.
 Note that
\begin{equation*}
	\ddiv^{*}\bm{u}= \ddiv^{*}(\ggrad \bm{v}-\frac{1}{3}(\ddiv\bm{v})\bm{I}) = \frac{2}{3}\ggrad(\ddiv\bm{v})\in L^2(\Omega;\mathbb{R}^3),
\end{equation*}
and $\ddiv\bm{v}\in L^2(\Omega;\mathbb{R})$, hence $\bm{v}\in H^1(\ddiv,\Omega;\mathbb{R}^3)$. Since $\bm{v}$ is a discrete function, $\bm{v}$ and $\ddiv\bm{v}$ are continuous at vertices, on edges, and in faces. Then the continuity of $\bm{u},\ddiv^{*}\bm{u}$ on edges and at vertices imply that $\nabla \bm{v},\nabla(\ddiv\bm{v})$ are also continuous on edges and at vertices. Then $\nabla_{pw}^2\bm{v} = \nabla_{pw}\bm{u} +\frac{1}{3}\nabla(\ddiv\bm{v}\bm{I})$ is continuous at all vertices. Finally, the continuity of $\nabla_{pw}(\ddiv^{*}\bm{u})$ at vertices implies that $\nabla_{pw}^2(\ddiv\bm{v})$ is continuous at vertices. Hence $\bm{v}\in V_{k+2, \triangle}$.
\end{proof}
Lemma \ref{sur:tri:dev} proves the surjection of the operator $\operatorname{dev}\ggrad: V_{k+2,\triangle}\rightarrow \operatorname{ker}(\ssym\ccurl)\cap \mathcal{U}_{k+1,\triangle}$ and the following lemma shows that the operator $\ddiv\ddiv:\Sigma_{k,\triangle} \rightarrow Q_{k-2,\triangle}$ is surjective.
\begin{lemma}
	For any $q_{h}\in Q_{k-2,\triangle}$ with $k \geq 3$, there exists some $\bm{u}_{h}\in \Sigma_{k, \triangle}$ such that $\ddiv\ddiv \bm{u}_{h} = q_{h}$.
\end{lemma}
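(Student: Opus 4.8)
The plan is to argue by duality, exactly as in the proof of Theorem \ref{global-complex} for cuboid grids. Since $\ddiv\ddiv\,\Sigma_{k,\triangle}\subseteq Q_{k-2,\triangle}$ is clear and both spaces are finite dimensional, surjectivity is equivalent to the statement: if $q\in Q_{k-2,\triangle}$ satisfies $(\ddiv\ddiv\bta,q)_{\Omega}=0$ for every $\bta\in\Sigma_{k,\triangle}$, then $q=0$. To exploit this I would apply, on each $K\in\mathcal{T}_{\triangle}$, the Green identity
\[
(\ddiv\ddiv\bta,q)_{K}=(\bta,\nabla^{2}q)_{K}-\sum_{f\in\mathscr{F}(K)}(\bta\mn_{f},\nabla q)_{f}+\sum_{f\in\mathscr{F}(K)}(\mn_{f}^{\intercal}\ddiv\bta,q)_{f}
\]
already used in the proof of Theorem \ref{sigma:dofs}, and then sum over $K$. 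Because $\Sigma_{k,\triangle}\subseteq\{\bsi\in H(\ddiv,\Omega;\ms):\ddiv\bsi\in H(\ddiv,\Omega;\mb^{3})\}$, both $\bta\mn_{f}$ and $\mn_{f}^{\intercal}\ddiv\bta$ are single valued across every interior face, so the face contributions assemble into interior jump terms weighted by $[\nabla q]_{f}$ and $[q]_{f}$, together with genuine boundary traces on $\partial\Omega$.

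First I would test against interior bubbles. Choosing $\bta$ with all boundary degrees of freedom vanishing leaves only $(\bta,\nabla^{2}q)_{K}=0$ on each $K$; since the interior degrees of freedom \eqref{DOF:Sig5} are precisely the moments $(\bta,\nabla^{2}p)_{K}$ for $p\in P_{k-2}(K;\mb)$ and the element is unisolvent, this forces $\nabla^{2}(q|_{K})=0$, i.e. $q$ is piecewise affine. After this reduction $\nabla^{2}q=0$ elementwise and the orthogonality collapses to a sum of jump and boundary terms involving only $[q]_{f}$ (affine on each face) and $[\nabla q]_{f}$ (constant on each face).

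The substance of the proof is then to feed in test tensors that isolate a single interior face $f_{0}$, so that the orthogonality reduces to $-(\bta\mn_{f_{0}},[\nabla q]_{f_{0}})_{f_{0}}+(\mn_{f_{0}}^{\intercal}\ddiv\bta,[q]_{f_{0}})_{f_{0}}=0$. Matching $\bta\mn_{f_{0}}$ to the constant $[\nabla q]_{f_{0}}$ through \eqref{DOF:Sig3} (possible since $P_{k-3}(f_{0})$ contains constants for $k\ge3$) while zeroing $\mn_{f_{0}}^{\intercal}\ddiv\bta$ gives $[\nabla q]_{f_{0}}=0$; symmetrically, matching $\mn_{f_{0}}^{\intercal}\ddiv\bta$ to $[q]_{f_{0}}$ through the face and vertex moments \eqref{DOF:Sig4} and \eqref{DOF:Sig1}--\eqref{DOF:Sig2} gives $[q]_{f_{0}}=0$. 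Since the interior gradient is piecewise constant and the mesh is connected, vanishing of all interior jumps makes $q$ globally affine; finally, testing against $\bta$ with arbitrary boundary data forces $\nabla q=0$ and $q|_{\partial\Omega}=0$ on $\partial\Omega$, whence $q=0$.

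The main obstacle is exactly this face isolation: the degrees of freedom controlling $\bta\mn_{f}$ and $\mn_{f}^{\intercal}\ddiv\bta$ are shared through the edges and vertices of $f_{0}$, and the face moments \eqref{DOF:Sig3}--\eqref{DOF:Sig4} only see $P_{k-3}(f)$ and the vertex-vanishing space $\widetilde{P}_{k-1}(f)$, neither of which detects the vertex (and, for $[q]_{f}$, the edge) values of the low-degree jumps. This is precisely where the enhanced vertex regularity of $\Sigma_{k,\triangle}$ enters: the vertex degrees of freedom \eqref{DOF:Sig1} for both $\bsi(a)$ and $\ddiv\bsi(a)$, together with the edge moments \eqref{DOF:Sig2}, must be used first to absorb the vertex and edge contributions of $[q]_{f}$ and $[\nabla q]_{f}$ and to guarantee that the chosen $\bta$ can be made to vanish consistently on all neighbouring faces, before the face interiors are treated. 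Carrying out this bookkeeping carefully, rather than the elementary polynomial identities, is where the real work lies.
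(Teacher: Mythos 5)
Your overall strategy --- finite-dimensional duality, transplanting the argument of Theorem \ref{global-complex} from cuboid to tetrahedral grids --- is legitimate and genuinely different from the paper's proof, which is constructive: the paper first lifts $q_h$ to a nonstandard Brezzi--Douglas--Marini field $\bm{v}_h\in H(\ddiv,\Omega;\mathbb{R}^3)$ with $\ddiv\bm{v}_h=q_h$ and then assigns the degrees of freedom \eqref{DOF:Sig1}, \eqref{DOF:Sig3}--\eqref{DOF:Sig5}, \eqref{DOF:Sig7} of $\bm{u}_h$ in two explicit steps so that $(\ddiv\ddiv\bm{u}_h-\ddiv\bm{v}_h,p_h)_K=0$ for all $p_h\in P_{k-2}(K;\mathbb{R})$. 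Your reduction is fine up to and including the bubble step: with all vertex, edge and face DOFs of $\bta$ set to zero one indeed gets $\bta\mn_f|_f=0$ and $\ddiv\bta\cdot\mn_f|_f=0$ on every face, and since $(\cdot,\nabla^2 q|_K)_K$ is itself a combination of the DOFs \eqref{DOF:Sig5}, unisolvence forces $q$ to be piecewise affine. The treatment of the jump $[\nabla q]_{f_0}$ is also sound, because the constant vector $[\nabla q]_{f_0}$ lies in $P_{k-3}(f_0;\mathbb{R}^3)$ for $k\ge 3$ and is detected by \eqref{DOF:Sig3} while all boundary traces on the other faces stay zero.

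The genuine gap is exactly where you place the ``real work'': the affine jump $[q]_{f_0}$. The mechanism you propose --- using the vertex DOFs \eqref{DOF:Sig1} and edge moments \eqref{DOF:Sig2} to ``absorb'' the vertex and edge contributions so that $\ddiv\bta\cdot\mn_{f_0}$ can be matched to $[q]_{f_0}$ --- cannot work as described: $\ddiv\bta(a)$ is a single vector shared by \emph{every} face meeting the vertex $a$, so giving it a nonzero value to reproduce the vertex values of $[q]_{f_0}$ on $f_0$ necessarily produces nonzero traces $\ddiv\bta\cdot\mn_f$ at the vertices of all neighbouring faces, hence uncancellable terms $(\ddiv\bta\cdot\mn_f,[q]_f)_f$ there; no remaining DOFs are available to remove them, and you concede the bookkeeping is not carried out. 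The step is, however, repairable without any vertex or edge DOFs: you never need to represent the affine jump pointwise, only to pair against it nondegenerately. Keep \emph{all} vertex and edge DOFs zero; then $\bta\mn_f\equiv 0$ on every face (so the $[\nabla q]$-terms drop out entirely), and $\ddiv\bta\cdot\mn_f$ vanishes at the vertices of each face, hence belongs to $\widetilde{P}_{k-1}(f;\mathbb{R})$; it is identically zero on every face except $f_0$, where \eqref{DOF:Sig4} lets you realize an arbitrary $g\in \widetilde{P}_{k-1}(f_0;\mathbb{R})$. The orthogonality then reads $(g,[q]_{f_0})_{f_0}=0$ for all such $g$, and you conclude from the elementary fact that the $L^2(f)$ pairing between $\widetilde{P}_{k-1}(f;\mathbb{R})$ and $P_1(f;\mathbb{R})$ is nondegenerate for $k\ge 3$: an affine function orthogonal to $\lambda_{f,1}\lambda_{f,2}$, $\lambda_{f,1}\lambda_{f,3}$, $\lambda_{f,2}\lambda_{f,3}$ vanishes (the relevant $3\times 3$ Gram-type matrix is invertible). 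The same device disposes of the boundary faces after $q$ has been shown globally affine. With this substitution your duality proof closes and stands as a true alternative to the paper's constructive one; as submitted, though, its decisive step is asserted rather than proved, and the proposed way of proving it would fail.
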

\begin{proof}
	For any $q_{h}\in Q_{k-2,\triangle}$, there exists some nonstandard Brezzi-Douglas-Marini element $\bm{v}_{h}\in H(\ddiv,\Omega;\mathbb{R}^3)$ consisting of piecewise polynomials of degree not greater than $k-1$ such that $\ddiv \bm{v}_{h}= q_{h}$(see \cite[(2.18)]{MR2594344}). Then it suffices to prove that there exists some $\bm{u}_{h}\in \Sigma_{k, \triangle}$ so that
	\begin{equation*}
	(\ddiv\ddiv \bm{u}_{h}-\ddiv\bm{v}_{h},p_{h})=0\quad\text{for all}~p_{h}\in Q_{k-2,\triangle}.
	\end{equation*}
	The construction of $\bm{u}_{h}$ will be carried out in two steps. Recall $$ {\widetilde{P}}_{k-1}(f;\mathbb{R})=\{{q}\in P_{k-1}(f;\mathbb{R}):{q}\, \text{vanishes at all vertices of }f\}$$
	from Theorem \ref{sigma:dofs}. First set the DOFs \eqref{DOF:Sig1}, \eqref{DOF:Sig4} by
	\begin{equation*}
	\bm{u}_{h}(a) = 0,\ddiv\bm{u}_{h}(a)=\bm{v}_{h}(a) \quad \text{for all}~a\in\mathscr{V},
	\end{equation*}
	\begin{equation*}
	\begin{split}
	(\ddiv\bm{u}_{h}\cdot\bm{n}_{f},\bm{q})_f = (\bm{v}_{h}\cdot\bm{n}_{f}, \bm{q})_f\quad \text{for all}~\bm{q}\in
	{\widetilde{P}}_{k-1}(f;\mathbb{R}^3)~~\text{and}~~f\in \mathscr{F}.
	\end{split}
	\end{equation*}
	Next, set the DOFs \eqref{DOF:Sig3}, \eqref{DOF:Sig7} by
	\begin{equation*}
(\bm{u}_{h}\bm{n}_{f}, \bm{q})_f =0 \quad\text{for all}~q\in P_{k-3}(f;\mathbb{R}^3)/P_0(f;\mathbb{R}^3)\quad\text{and}~f\in \mathscr{F},
	\end{equation*}
	\begin{equation*}
	(\bm{u}_{h},\nabla^2 q)_K=(\bm{v}_{h},\nabla q)_K\quad \text{for all}~q\in P_{k-2}(K;\mathbb{R})/P_1(K;\mathbb{R})~~\text{and}~~K\in\mathcal{T}_{h}.
	\end{equation*} 
	It follows from the above DOFs that 
	\begin{equation*}
	\ddiv\bm{u}_{h}\cdot\bm{n}_{f} = \bm{v}_{h}\cdot\bm{n}_{f}\quad\text{for all}~f\in \mathscr{F}.
	\end{equation*}
	Then for any $p_{h}\in P_{k-2}(K;\mathbb{R})/P_1(K;\mathbb{R})$, it holds 
	\begin{equation*}
	\begin{aligned}
	(\ddiv\ddiv \bm{u}_{h}-\ddiv\bm{v}_{h},p_{h})_{K} &= -(\ddiv\bm{u}_{h}-\bm{v}_{h},\nabla p_{h})_{K} \\
	& = -\sum_{f\in\mathscr{F}(K)}(\bm{u}_{h}\bm{n}_{f},\nabla p_{h})_{f} + (\bm{u}_{h},\nabla^2p_{h})_{K} -(\bm{v}_{h},\nabla p_{h})_{K}.
	\end{aligned}
	\end{equation*}
	If $p_{h}\in P_1(K;\mathbb{R})$, then $\nabla p_{h} = (a,b,c)^{T}\in P_0(K;\mathbb{R}^3)$. By induction, there exists $3\#\mathscr{F}$ constants $\cup_{f\in\mathscr{F}}\{a_f,b_f,c_f\}$ which satisfy 
\begin{equation*}
\begin{aligned}
-\sum_{f\in\mathscr{F}(K)}a_{f} &=(\bm{v}_{h},\bm{I}_1)_K,\quad\text{for all}~K\in \mathcal{T}_{h},\\
-\sum_{f\in\mathscr{F}(K)}b_{f}&=(\bm{v}_{h}, \bm{I}_2)_K,\quad\text{for all}~K\in \mathcal{T}_{h},\\
-\sum_{f\in\mathscr{F}(K)}c_{f} &=(\bm{v}_{h}, \bm{I}_3)_K,\quad\text{for all}~K\in \mathcal{T}_{h}.   
\end{aligned}
\end{equation*}	
Here $\bm{I}_1:=(1, 0, 0)^{\intercal}$, $\bm{I}_2:=(0, 1, 0)^{\intercal}$, $\bm{I}_3:=(0, 0, 1)^{\intercal}$, and $\#\mathscr{F}$ denotes the number of faces in the finite set $\mathscr{F}$. 

The second step is to set the DOFs \eqref{DOF:Sig3} by
	\begin{equation*}
    (\bm{u}_{h}\bm{n}_{f}, \bm{I}_1)_f=a_{f},\quad (\bm{u}_{h}\bm{n}_{f}, \bm{I}_2)_f=b_{f},\quad (\bm{u}_{h}\bm{n}_{f}, \bm{I}_3)_f=c_{f},\quad\text{for all}~f\in \mathscr{F},        
	\end{equation*}
	and set the other DOFs to be zero. On each element $K\in\mathcal{T}_\triangle$, the above construction of $\bm{u}_{h}$ shows, for any $p_{h}\in P_{k-2}(K;\mathbb{R})$, that
\begin{equation*}
(\ddiv\ddiv \bm{u}_{h}-\ddiv\bm{v}_{h},p_{h})_{K} =-\sum_{f\in\mathscr{F}(K)}(\bm{u}_{h}\bm{n}_{f},\nabla p_{h})_{f} + (\bm{u}_{h},\nabla^2p_{h})_{K} -(\bm{v}_{h},\nabla p_{h})_{K}=0. 
\end{equation*}
This concludes the proof.
\end{proof}
The exactness of the discrete complex \eqref{eq:DisComp} is proved in the following theorem.
\begin{theorem}
	The finite element sequence 
	\ben
	~~~~~~~~~RT\stackrel{\subseteq}{\longrightarrow}{V}_{k+2,\triangle}\xlongrightarrow[~]{{{\rm{dev}\,  {\ggrad}}}} \mathcal{U}_{k+1,\triangle}\xlongrightarrow[~]{{{\rm{sym}}\,  {\bc}}}{\Sigma}_{k,\triangle}\xlongrightarrow[~]{{ {\rm{div}}\, {\bd}}}{Q}_{k-2,\triangle}{\longrightarrow} 0
	\een
is a divdiv complex, which is exact on a contractible domain.
\end{theorem}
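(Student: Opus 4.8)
The plan is to verify that the sequence is a complex and then establish exactness at each of the four nontrivial positions, reusing the two surjectivity lemmas already proved in this subsection and mimicking the dimension-count argument of Theorem \ref{global-complex}. First I would record the complex property. The inclusions $\operatorname{dev}\ggrad V_{k+2,\triangle}\subseteq \mathcal{U}_{k+1,\triangle}$ and $\ssym\ccurl\,\mathcal{U}_{k+1,\triangle}\subseteq \Sigma_{k,\triangle}$ noted at the start of this subsection make the maps well defined; since these are merely restrictions of the operators in the continuous divdiv complex \eqref{divdivComp}, the identities $\ssym\ccurl\circ\operatorname{dev}\ggrad=0$ and $\ddiv\ddiv\circ\ssym\ccurl=0$ are inherited from it, so the sequence is a complex. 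Exactness at $V_{k+2,\triangle}$ is then immediate: $\rk(\operatorname{dev}\ggrad)\cap H^1(\Omega;\mathbb{R}^3)=RT$, and since $RT\subseteq V_{k+2,\triangle}$ one gets $\rk(\operatorname{dev}\ggrad)\cap V_{k+2,\triangle}=RT$, which is exactly the image of the leftmost inclusion. Exactness at $\mathcal{U}_{k+1,\triangle}$ is precisely Lemma \ref{sur:tri:dev}, and surjectivity of $\ddiv\ddiv$ onto $Q_{k-2,\triangle}$ (exactness at the right end) is the surjectivity lemma established immediately above.

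The only remaining point, and the main obstacle, is exactness at $\Sigma_{k,\triangle}$, that is $\ssym\ccurl\,\mathcal{U}_{k+1,\triangle}=\rk(\ddiv\ddiv)\cap\Sigma_{k,\triangle}$. The complex property already gives the inclusion $\subseteq$, so it suffices to match dimensions, exactly as in Theorem \ref{global-complex}. By rank--nullity together with Lemma \ref{sur:tri:dev},
\begin{equation*}
\dim\bigl(\ssym\ccurl\,\mathcal{U}_{k+1,\triangle}\bigr)=\dim\mathcal{U}_{k+1,\triangle}-\dim\bigl(\operatorname{dev}\ggrad V_{k+2,\triangle}\bigr)=\dim\mathcal{U}_{k+1,\triangle}-\dim V_{k+2,\triangle}+4,
\end{equation*}
using $\rk(\operatorname{dev}\ggrad)\cap V_{k+2,\triangle}=RT$ and $\dim RT=4$, while the surjectivity of $\ddiv\ddiv$ yields
\begin{equation*}
\dim\bigl(\rk(\ddiv\ddiv)\cap\Sigma_{k,\triangle}\bigr)=\dim\Sigma_{k,\triangle}-\dim Q_{k-2,\triangle}.
\end{equation*}
Thus exactness at $\Sigma_{k,\triangle}$ reduces to the single scalar identity $\dim\mathcal{U}_{k+1,\triangle}-\dim V_{k+2,\triangle}+4=\dim\Sigma_{k,\triangle}-\dim Q_{k-2,\triangle}$.

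To verify this identity I would expand each global dimension as a sum of its vertex-, edge-, face-, and interior-DOF counts from Theorems \ref{uniso-H1}, \ref{th3-8}, and \ref{sigma:dofs}, together with $\dim Q_{k-2,\triangle}=\binom{k+1}{3}\#\mathcal{T}_\triangle$, writing each space as $\alpha\,\#\mathscr{V}+\beta\,\#\mathscr{E}+\gamma\,\#\mathscr{F}+\delta\,\#\mathcal{T}_\triangle$. Forming the difference of the two sides, the vertex, edge, face, and cell coefficients match up, so invoking Euler's formula $\#\mathscr{V}-\#\mathscr{E}+\#\mathscr{F}-\#\mathcal{T}_\triangle=1$ for a contractible domain collapses the remaining terms to the constant $4$, closing the count. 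The delicate part is purely bookkeeping: the enhanced vertex regularity of $\Sigma_{k,\triangle}$, $\mathcal{U}_{k+1,\triangle}$, and $V_{k+2,\triangle}$ (the extra DOFs $\ddiv\bsi(a)$, $\nabla(\ddiv^{*}\bm{u})(a)$, and $\nabla^2(\ddiv\bm{v})(a)$) inflates the per-vertex DOF counts, so one must track these carefully and confirm that their contributions cancel correctly through Euler's formula, which is the subtle step distinguishing this complex from its lower-regularity predecessors.
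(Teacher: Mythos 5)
Your proposal is correct and follows essentially the same route as the paper: the paper likewise disposes of exactness at $V_{k+2,\triangle}$, $\mathcal{U}_{k+1,\triangle}$, and $Q_{k-2,\triangle}$ via the inclusions and the two surjectivity lemmas, and then notes that ``it only remains to check the dimension,'' verifying through Euler's formula $\#\mathscr{V}-\#\mathscr{E}+\#\mathscr{F}-\#\mathcal{T}_{h}=1$ that $\operatorname{dim}V_{k+2,\triangle}-\operatorname{dim}\mathcal{U}_{k+1,\triangle}+\operatorname{dim}\Sigma_{k,\triangle}-\operatorname{dim}Q_{k-2,\triangle}=4=\operatorname{dim}RT$, which is exactly your rank--nullity identity for exactness at $\Sigma_{k,\triangle}$ in alternating-sum form. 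Your bookkeeping plan, including tracking the enhanced vertex DOFs, matches the paper's explicit per-vertex, per-edge, per-face, per-cell dimension tallies.
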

\begin{proof}
	It only remains to check the dimension. Notice that the global dimension of $V_{k+2,\triangle}$ is 
	$$36\#\mathscr{V} + (11k-29)\#\mathscr{E} + (2k^2-11k+15)\#\mathscr{F} + \frac{k^3-4k^2+3k}{2}\#\mathcal{T}_{h}.$$
 Here $\# \mathcal{S}$ is the number of the elements in the finite set $S$. Similarly, by the degrees of freedom defined above, the global dimension of $\mathcal{U}_{k+1, \triangle}$ is 
 $$41\#\mathscr{V} + (16k-30)\#\mathscr{E} + (4k^2-15k+11)\#\mathscr{F} + \frac{4k^3-12k^2-4k+12}{3}\#\mathcal{T}_{h}.$$
The global dimension of $\Sigma_{k, \triangle}$ is 
$$9\#\mathscr{V} + (5k-5)\#\mathscr{E}+ (2k^2-4k)\#\mathscr{F} + (k^3-2k^2-3k)\#\mathcal{T}_{h},$$
 and the global dimension of $Q_{k-2,\triangle}$ is $\frac{k^3-k}{6}\#\mathcal{T}_{h}$.
	
	By Euler's formula $\#\mathscr{V}-\#\mathscr{E}+\#\mathscr{F}-\#\mathcal{T}_{h}=1$, it holds that
	\begin{equation*}
	\operatorname{dim}V_{k+2,\triangle}-\operatorname{dim}\mathcal{U}_{k+1, \triangle}+\operatorname{dim}\Sigma_{k, \triangle}-\operatorname{dim}Q_{k-2,\triangle} = 4=\operatorname{dim}RT.
	\end{equation*}
	This concludes the proof.
\end{proof}

\bibliographystyle{plain}
\bibliography{reference}

\end{document}